\documentclass[11pt]{article}
\usepackage[a4paper,margin=1in]{geometry}
\usepackage[T1]{fontenc}
\usepackage[utf8]{inputenc}
\usepackage{amsmath,amssymb,amsfonts,amsthm}
\usepackage{mathrsfs}
\usepackage{enumitem}
\usepackage{xcolor}

\usepackage[colorlinks=true,linkcolor=blue,citecolor=red,urlcolor=blue]{hyperref}
\usepackage{microtype}

\newtheorem{theorem}{Theorem}[section]
\newtheorem{lemma}[theorem]{Lemma}
\newtheorem{proposition}[theorem]{Proposition}

\newtheorem*{remark*}{Remark}

\allowdisplaybreaks

\title{Radial Nodal Dirichlet Solutions of Singular Elliptic Equations: Global Branches, Endpoint Asymptotics, and Morse Indices}

\date{}
\author{Wenjing Chen\footnote{E-mail address:\, {\tt wjchen@swu.edu.cn}.}\  \ and Zexi Wang\footnote{Corresponding author. E-mail address:\, {\tt zxwangmath@163.com}.} \\
\footnotesize  School of Mathematics and Statistics, Southwest University,
Chongqing, 400715, P.R. China}

\begin{document}
\maketitle

\begin{abstract}
{We establish sharp finite-ball shooting classifications for radial nodal
Dirichlet solutions of the logarithmic equation and, within the maximal
simple-zero shooting class, of the sublinear scalar-field equation.  Recent
whole-space uniqueness and phase-transition theorems are used as external
inputs, while the finite-ball zero-curve ranges, endpoint asymptotics, and
spectral consequences are proved here.  The two models require different
singular analyses: in the logarithmic problem the nonlinearity is not locally
Lipschitz at a nodal zero and the linearized potential diverges there, whereas
in the sublinear problem the limiting whole-space profile reaches a double
zero at a finite support radius, beyond which continuation is nonunique.}

For every $R>0$ and $k\ge0$, the logarithmic problem has, up to sign, a unique
radial Dirichlet solution with exactly $k$ interior zeros. Its shooting
height $\beta_k^{\log}(R)$ is a strictly decreasing $C^1$ bijection from
$(0,\infty)$ onto $(\alpha_k^{\log},\infty)$ and satisfies
\[
 \beta_k^{\log}(R)\longrightarrow\alpha_k^{\log}
 \quad(R\to\infty),\qquad
 \log\bigl(\beta_k^{\log}(R)^2\bigr)
 =\frac{\rho_{k+1}^2}{R^2}+\kappa_{k+1,n}+o(1)
 \quad(R\downarrow0),
\]
{where $\kappa_{k+1,n}>0$ is given by an explicit Bessel integral.}
For the sublinear problem, the maximal simple-zero branch exists precisely for
$R\in(\rho_{k+1},S_k)$, where $\rho_{k+1}$ is the $(k+1)$-st positive zero of the regular Bessel profile $\Phi_n$, and $S_k$ is the support radius of the unique
compactly supported $k$-node whole-space bound state. Its shooting height is
strictly decreasing and obeys
\[
 \beta_k^{\rm sub}(R)\longrightarrow\alpha_k^{\rm sub}
 \quad(R\uparrow S_k),\qquad
 (R-\rho_{k+1})\bigl(\beta_k^{\rm sub}(R)\bigr)^{2-q}
 \longrightarrow \mathfrak c_{k+1,q,n}
 \quad(R\downarrow\rho_{k+1}),
\]
with an explicit constant $\mathfrak c_{k+1,q,n}>0$. {The zero extensions converge to the compactly supported limiting state in
$W^{1,\infty}(0,S_k)$.  We also record its free-boundary profile and the
corresponding first- and second-derivative rates.  After
rescaling to the unit ball, both families form strictly monotone no-fold
parameter branches on their maximal parameter intervals.}

{Finally, we develop a radial Sturm theory for the singular linearized
potentials by approximation with bounded potentials and convergence of the
associated forms.  For the power benchmark and for the logarithmic and
sublinear branches above, the stopped $k$-node profile is radially
nondegenerate and has radial Morse index $k+1$; the degree-one angular sector
has exactly $k$ negative eigenvalues.  Spherical-harmonic decomposition yields
a finite formula for the full Morse index and shows that possible nonradial
degeneracy can occur only in finitely many angular sectors of degree at least
two.}
\end{abstract}

\noindent\textbf{Keywords.}
{radial nodal solutions; global branches; endpoint asymptotics; angular decomposition; Morse index.}

\medskip
\noindent\textbf{Mathematics Subject Classification.}
{35J61, 35B05, 35P15.}

\section{Introduction and main results}
Let $B_R\subset\mathbb R^n$ be the ball of radius $R>0$ centered at the
origin.  We study regular radial solutions of
\begin{equation}\label{eq:unified-ball}
 \Delta U+f(U)=0\quad\text{in }B_R,
 \qquad U=0\quad\text{on }\partial B_R,
\end{equation}
that are positive at the origin and have a prescribed number of simple zeros
in $(0,R)$.  A regular radial solution is represented by a function
$U\in C^1([0,R])\cap C^2((0,R))$ satisfying $U'(0)=0$ and the radial
equation pointwise; continuity of the nonlinearities makes the equation
classical also at a nodal zero.  The nonlinearities considered are
\begin{align}
 f_{\rm pow}(u)&=-u+|u|^{p-1}u,
 &1<p<p_{\rm S},                                      \label{eq:intro-power}\\
 f_{\log}(u)&=u\log u^2,                                \\
 f_{\rm sub}(u)&=u-|u|^{q-2}u,
 &1<q<2,                                                  \label{eq:intro-sub}
\end{align}
where
\begin{equation*}
 p_{\rm S}=\begin{cases}
 \infty,\qquad  &n=2,\\
 \dfrac{n+2}{n-2},\qquad &n\ge3.
 \end{cases}
\end{equation*}
The logarithmic and sublinear equations are the {principal finite-ball
problems studied in this paper}. The power equation is retained only as an
already classified {benchmark for the shooting and spectral mechanisms}.
%\rev{Throughout the paper, $k$ denotes the number of interior nodal zeros of
%the stopped radial profile.  Thus $k=0$ is included and denotes the positive
%(ground-state) branch; statements specifically about sign-changing
%whole-space bound states are naturally indexed by $k\ge1$.}

%\medskip

The whole-space scalar-field problem
 \begin{equation}\label{scp}
 \Delta u+f(u)=0\quad\hbox{in }\mathbb R^n,
 \qquad u(x)\to0\quad\hbox{as }|x|\to\infty,
 \end{equation}
was studied systematically in the foundational work of Berestycki and Lions
\cite{BL1,BL2}.  {Under natural structural assumptions on the nonlinearity $f$, they established
the existence of a ground state and infinitely many bound states. They also
conjectured uniqueness of a bound state with a prescribed number of nodes in
several important model classes. Here, a nontrivial decaying solution of
\eqref{scp} is called a bound state; its positive least-energy member is the
ground state, while a sign-changing radial member is a nodal bound state. The
radial shooting approach has a longer history. Early
nonlinear eigenvalue and oscillation arguments can be traced back to Kolodner
\cite{Kolodner}, while the prescribed-zero existence theory was developed by}
Jones--K\"upper \cite{JonesKupper} and McLeod--Troy--Weissler
\cite{MTW}.
The uniqueness of the ground state was subsequently advanced through the works of
Chen--Lin \cite{ChenLin}, Coffman \cite{Coffman,Coffman96}, Kwong
\cite{Kwong,KZ91},  McLeod--Serrin \cite{McLeodSerrin},  Peletier--Serrin \cite{PeletierSerrin},  Pucci--Serrin \cite{PucciSerrin98}, Serrin--Tang
\cite{SerrinTang}, and Yanagida \cite{Y91}.

{Three recent classification results provide the whole-space
phase-transition input used below. Tang \cite{Tang} proved the complete
prescribed-node classification for the power equation in dimensions $n\ge3$;
Liu--Sun--Zou \cite{LiuSunZou} proved uniqueness of logarithmic bound states
with every prescribed nodal number for $n\ge2$ and established the variation
interlacing needed here for $n\ge3$; and Zhang--Zhang \cite{ZhangZhang}
proved the corresponding uniqueness and phase-transition theory for the
compactly supported sublinear bound states, together with the planar extension
of the power classification.  These whole-space theorems are used as inputs;
the exact finite-ball ranges, endpoint laws, and singular operator arguments
below are not consequences of whole-space uniqueness alone.}

For the power nonlinearity \eqref{eq:intro-power}, prescribed-node
uniqueness was proved by Cort\'{a}zar, Garc\'{i}a-Huidobro and Yarur
\cite{CGY11} in dimensions $n=2,3,4$ for restricted exponent ranges, by
Ao--Wei--Yao \cite{AWY16} for the one-node state when the exponent is close to
the critical exponent, and by Cohen--Li--Schlag \cite{CLS24}, using
computer-assisted methods, for the first twenty radial excited states of the
cubic equation in dimension three.
A major breakthrough was subsequently achieved by Tang \cite{Tang},
who obtained the complete classification of bound states in all dimensions $n\geq3$.
{More precisely, for $k\ge0$, let $\alpha_k^{\rm pow}$ be the initial
height of the unique whole-space $k$-node bound state, and let
$z_{k+1}^{\rm pow}(\alpha)$ denote the $(k+1)$-st positive zero of
the shooting solution; for $k=0$, $\alpha_0^{\rm pow}$ denotes the
ground-state height. The proof of Tang's Theorems~1 and~2, together
with the classical ground-state case, gives}
\begin{equation*}
 z_{k+1}^{\rm pow}:(\alpha_k^{\rm pow},\infty)
 \longrightarrow(0,\infty)
 \quad\hbox{is strictly decreasing},
\end{equation*}
with
\begin{equation*}
 \lim_{\alpha\downarrow\alpha_k^{\rm pow}}
 z_{k+1}^{\rm pow}(\alpha)=\infty,
 \qquad
 \lim_{\alpha\to\infty}z_{k+1}^{\rm pow}(\alpha)=0.
\end{equation*}
Consequently, for every $R>0$, there is a unique shooting height
$\beta_k^{\rm pow}(R)>\alpha_k^{\rm pow}$ satisfying
\[
 z_{k+1}^{\rm pow}(\beta_k^{\rm pow}(R))=R.
\]
{Equivalently, every ball contains, up to sign, exactly one
radial solution with $k$ interior zeros.}
Moreover,
\begin{equation}\label{eq:intro-power-beta}
 \beta_k^{\rm pow}:(0,\infty)
 \longrightarrow(\alpha_k^{\rm pow},\infty)
 \quad\hbox{is $C^1$ and strictly decreasing},
\end{equation}
and
\begin{equation}\label{eq:intro-power-beta-endpoints}
\lim\limits_{R\downarrow0} \beta_k^{\rm pow}(R)=\infty,
 \qquad
\lim\limits_{R\to\infty} \beta_k^{\rm pow}(R)=\alpha_k^{\rm pow}.
\end{equation}
Thus Tang's whole-space classification theorem, together with its zero-curve
argument, already implies the existence and uniqueness result on finite balls, as well as the monotonicity of the shooting height in the power case.
For $n=2$, Zhang--Zhang's phase-transition argument yields the corresponding
conclusion for every $p>1$; see \cite[Section~6]{ZhangZhang}. Earlier
finite-ball uniqueness criteria for superlinear and sublinear nodal profiles
were obtained by Tanaka \cite{TanakaSuper,TanakaSub,Tanaka}. Their structural and
regularity assumptions are not designed for the present logarithmic
linearization or for a nonlinearity with nonunique continuation from a double
zero. Variational nodal solutions for broad sublinear-type Dirichlet
problems were studied in \cite{BonheureSantosPariniTavaresWeth}; those results
address existence and symmetry rather than the maximal radial shooting
branches considered here.

{Thus the power theory is used here as an input and a benchmark, rather
than as a new finite-ball classification result.  The issue is which parts of
the inverse zero-curve picture
\eqref{eq:intro-power-beta}--\eqref{eq:intro-power-beta-endpoints} persist for
the two singular models.  This requires Cauchy theory through logarithmic or
sublinear singular zeros, a separate treatment of the compact-support
endpoint in the sublinear problem, and a singular Sturm argument for the
linearized operators.  The precise finite-ball conclusions are summarized
after the two model-specific backgrounds below.}

The logarithmic nonlinearity originates in the nonlinear wave-mechanics model introduced by Bialynicki-Birula and Mycielski \cite{BBM}, and its variational framework is closely connected with the logarithmic Sobolev inequality \cite{Gross,LL01}.
The existence and variational properties of stationary logarithmic equations
have been studied extensively; see, for instance, \cite{AJ20,LWZ26,S19,SquassinaSzulkin,TanakaZhang,WangZhang,WangZhangZhang}. In the whole space, the positive solution is given explicitly by the Gausson
\begin{equation}\label{eq:intro-gausson}
u_0(x)=\exp\left(\frac n2-\frac{|x|^2}{2}\right).
\end{equation}
Its uniqueness was established by Troy \cite{Troy} for $2\leq n\leq 9$ and by
d'Avenia--Montefusco--Squassina \cite{DMS} for $n\geq3$ using different methods.
More recently, Liu--Sun--Zou \cite{LiuSunZou} proved the uniqueness of
{radial logarithmic bound states with every prescribed nodal number} in
all dimensions $n\ge2$. We denote the associated shooting heights by
\begin{equation}\label{eq:alphak}
e^{\frac{n}{2}}=\alpha_0^{\log}
<\alpha_1^{\log}<\alpha_2^{\log}<\cdots,
\qquad \alpha_k^{\log}\to\infty.
\end{equation}
{The logarithmic problem on a finite ball has two singular
features.  The map $u\mapsto u\log u^2$ is not locally Lipschitz at $u=0$,
and the linearized coefficient $\log u^2+2$ diverges at every nodal zero.
Near a simple zero the divergence is logarithmic and therefore locally
integrable.  We exploit this fact to prove uniqueness and continuous
dependence for both the nonlinear shooting equation and its variation across
simple zeros; this supplies the differentiability of the zero curves that is
needed for the finite-ball classification.}

For the sublinear model
\[
 -\Delta u-u+|u|^{q-2}u=0,
 \qquad 1<q<2,
\]
whole-space ground states and bound states are compactly supported.
The symmetry and free-boundary structure of ground states were studied in
\cite{CortazarDelPinoElgueta1,CortazarDelPinoElgueta2,CortazarElguetaFelmer,Gui,IkomaTanakaWangZhang},
while the existence of bound states and the qualitative asymptotic behavior of their supports were investigated in
\cite{BalabaneDolbeaultOunaies,GazzolaSerrinTang}.
This compact-support phenomenon is a consequence of the non-Lipschitz
behavior of the nonlinearity $f_{\rm sub}(u)=u-|u|^{q-2}u$ at $u=0$; see, for instance, Pucci--Serrin \cite{PucciSerrin}. Despite the singular nature of $f_{\rm sub}$, the uniqueness of the ground state was established in \cite{PeletierSerrin,SerrinTang}. More recently,
Zhang--Zhang \cite{ZhangZhang} proved the uniqueness of compactly supported
{radial bound states with every prescribed nodal number} in every
dimension $n\ge2$.

{Unlike the power and logarithmic profiles, a sublinear bound
state reaches a double zero at a finite support radius.  Beyond that point the
nonlinear Cauchy problem is not uniquely solvable.  Consequently, the
finite-ball classification must be formulated in the class of shooting
profiles that encounter only simple zeros before the boundary.  In this class
the relevant zero curve has the exact range $(\rho_{k+1},S_k)$: the lower
endpoint is selected by a linear Bessel limit at large shooting height, while
the upper endpoint is the support radius of the compactly supported
whole-space {bound state}.}

{The finite-ball contributions of this paper can now be stated without
including the already classified power case.  For the logarithmic equation,
we determine the exact range and transversality of each relevant stopped zero
curve and obtain a second-order small-radius expansion.  For the sublinear
equation, we determine the maximal simple-zero radius interval, the sharp
Bessel law at its lower endpoint, and $W^{1,\infty}$ convergence to the
compactly supported state at its upper endpoint; we also record the known
leading free-boundary profile and its equivalent first-derivative law, and
derive the second-derivative rate used in the spectral discussion.  For both
singular models, we then connect zero-curve
transversality with the singular linearized spectrum, proving radial
nondegeneracy and the exact radial Morse index.  Finally, for the power
benchmark and both singular models, spherical-harmonic decomposition gives
the degree-one count, bounds all higher angular counts, and yields the full
Morse-index formula stated below.}

For all three models, the radial shooting problem takes the form
\begin{equation}
 \begin{cases}
 u''+\dfrac{n-1}{r}u'+f(u)=0,\qquad  r>0,\\[4pt]
 u(0)=\alpha>0,\qquad u'(0)=0.
 \end{cases}
 \label{eq:intro-shooting-unified}
\end{equation}
Whenever it is defined, let $z_j(\alpha)$ denote the $j$-th positive simple
zero of $u(\cdot,\alpha)$ and let
\[
 v(\cdot,\alpha):=\partial_\alpha u(\cdot,\alpha)
\]
denote the shooting variation.  The key phase-transition information is
the strict sign relation at each zero
\begin{equation*}
 u_r(z_j(\alpha),\alpha)v(z_j(\alpha),\alpha)>0.
\end{equation*}
Differentiating the identity $u(z_j(\alpha),\alpha)=0$ with respect to $\alpha$ yields
\begin{equation}\label{eq:intro-zero-derivative}
 z_j'(\alpha)
 =-\frac{v(z_j(\alpha),\alpha)}
         {u_r(z_j(\alpha),\alpha)}<0.
\end{equation}
Thus the finite-ball classification reduces to determining the exact
range of the relevant zero curve. Its inverse gives precisely the
shooting height as a function of the ball radius, and
\eqref{eq:intro-zero-derivative} implies that this shooting height is strictly monotone.

The same shooting variation also controls the spectrum. Let $U$ be a stopped
radial profile in $B_R$ and define
\[
 L_U:=-\Delta-f'(U).
\]
On the radial space $H^1_{0,{\rm rad}}(B_R)$, consider the quadratic
form
\begin{equation}\label{eq:intro-radial-form}
 \mathcal Q_U(\phi)
 :=\int_0^R\left(|\phi'(r)|^2-f'(U(r))\phi^2(r)\right)
 r^{n-1}\,dr.
\end{equation}
In the logarithmic and sublinear cases, this is understood as the
closed, lower-bounded radial quadratic form associated with the
locally integrable singular potential $f'(U)$; its construction is recalled in
Section~\ref{sec:proof-morse}.  The \emph{radial Morse index} of $U$ is defined by
\begin{equation*}
\begin{aligned}
 m_{\rm rad}(U)
 :=\sup\Bigl\{\dim X:X\subset H^1_{0,{\rm rad}}(B_R),\
 \mathcal Q_U(\phi)<0\ \hbox{for every }0\ne\phi\in X\Bigr\}.
\end{aligned}
\end{equation*}
See, for instance, \cite{DIP17,DIP17',HRS11}.
Equivalently, $m_{\rm rad}(U)$ is the number of negative eigenvalues, counted with
multiplicity, of the radial self-adjoint realization of $L_U$. We denote its radial Dirichlet kernel by $\ker_{\rm rad}L_U$ and call $U$ \emph{radially nondegenerate} if $\ker_{\rm rad}L_U=\{0\}$. The shooting
variation $v$ is the regular zero-energy solution of the radial
linearized equation. Consequently, the Sturm oscillation theorem
identifies ${m_{\rm rad}}(U)$ with the number of zeros of $v$ in
$(0,R)$, while the condition $v(R)\ne0$ rules out a radial zero
eigenvalue and hence yields radial nondegeneracy.

Let $\Phi=\Phi_n$ be the regular solution of
\begin{equation}\label{eq:main-bessel}
\Phi''+\frac{n-1}{s}\Phi'+\Phi=0,
\qquad \Phi(0)=1,
\qquad \Phi'(0)=0,
\end{equation}
and denote its positive zeros by
$
0<\rho_1<\rho_2<\cdots.
$
For $j\ge1$, set
\begin{equation}\label{eq:main-log-kappa}
 \kappa_{j,n}:=
 -\frac{2\displaystyle\int_0^{\rho_j}
 s^{n-1}\Phi_n^2(s)\log\Phi_n^2(s)\,ds}
 {\rho_j^n\Phi_n'^2(\rho_j)}>0.
\end{equation}
The integrand is defined to be zero at the zeros of $\Phi_n$.

\begin{theorem}\label{thm:log-main}
Let $n\ge2$, $R>0$, and $k\ge0$. The Dirichlet problem
\begin{equation}\label{eq:main-log-ball}
 \begin{cases}
 \Delta U+U\log U^2=0,&x\in B_R,\\
 U=0,&x\in\partial B_R,
 \end{cases}
\end{equation}
where $U\log U^2$ is extended continuously by zero at $U=0$, admits a
unique regular radial solution $U^{\log}_{k,R}$ satisfying
$U^{\log}_{k,R}(0)>0$ and having exactly $k$ zeros in $(0,R)$. All its
interior zeros and its boundary zero are simple. Every nontrivial radial
solution of \eqref{eq:main-log-ball} with exactly $k$ interior zeros is equal
to either $U^{\log}_{k,R}$ or $-U^{\log}_{k,R}$.

Define $\beta_k^{\log}(R):=U^{\log}_{k,R}(0)$. Then
\[
 \beta_k^{\log}:(0,\infty)\longrightarrow
 (\alpha_k^{\log},\infty)
\]
is a {strictly decreasing $C^1$ bijection}. With
$u(\cdot,\alpha)$ denoting the shooting solution and
$v=\partial_\alpha u$ its variation, one has
\begin{equation*}
 \bigl(\beta_k^{\log}\bigr)'(R)
 =-\frac{u_r(R,\beta_k^{\log}(R))}
 {v(R,\beta_k^{\log}(R))}<0.
\end{equation*}
Moreover,
\begin{equation}\label{eq:main-log-limits}
 \lim_{R\to\infty}\beta_k^{\log}(R)=\alpha_k^{\log},
 \qquad
 \log\bigl(\beta_k^{\log}(R)^2\bigr)
 =\frac{\rho_{k+1}^2}{R^2}+\kappa_{k+1,n}+o(1)
 \quad(R\downarrow0).
\end{equation}
Here $\alpha_k^{\log}$ is the $k$-th logarithmic whole-space shooting
height defined in \eqref{eq:alphak}, and $\rho_{k+1}$ is the
$(k+1)$-st positive zero of $\Phi_n$ in \eqref{eq:main-bessel}.
Furthermore, as $R\to\infty$,
\begin{equation}\label{eq:main-log-convergence}
 U^{\log}_{k,R}\longrightarrow u_k^{\log}
 \quad\text{in }C^1_{\mathrm{loc}}(\mathbb R^n)
 \quad\text{and in }C^2_{\mathrm{loc}}(\mathbb R^n\setminus Z_k),
\end{equation}
where $u_k^{\log}$ is the unique whole-space $k$-node logarithmic bound
state and $Z_k$ is its nodal set.
\end{theorem}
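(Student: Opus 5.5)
The plan is to reduce everything to the zero curve $z_{k+1}^{\log}(\alpha)$ of the shooting problem \eqref{eq:intro-shooting-unified} with $f=f_{\log}$, and to show that it is a strictly decreasing $C^1$ bijection from $(\alpha_k^{\log},\infty)$ onto $(0,\infty)$. The theorem then follows by setting $\beta_k^{\log}:=(z_{k+1}^{\log})^{-1}$.

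\textbf{Step 1: Cauchy theory through simple zeros.} Although $u\log u^2$ is not Lipschitz at $0$, I would prove local uniqueness at a point $r_0$ with $u(r_0)=0$, $u'(r_0)\neq0$. Near such a point $|u|\ge c|r-r_0|$, so the difference $w$ of two solutions satisfies $|f(u_1)-f(u_2)|\le (|\log|r-r_0||+C)|w|$. This weight is integrable, and a Gronwall argument gives $w\equiv0$.

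\textbf{Step 2: the variation $v=\partial_\alpha u$.} In the same way, the linearized equation $v''+\frac{n-1}{r}v'+(\log u^2+2)v=0$ has an $L^1_{\rm loc}$ potential near simple zeros. Hence $v$ exists, is $C^1$ in $r$, and depends continuously on $\alpha$. I would verify differentiability of $u$ in $\alpha$ by writing the difference quotient equation and again using the integrable logarithmic weight. Consequently $z_j(\alpha)$ is $C^1$ wherever the $j$-th zero is simple.

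Every zero of a nontrivial solution is simple. If $u(r_0)=u'(r_0)=0$, the energy $E=\frac12u'^2+\frac12u^2\log u^2-\frac12u^2$ is nonincreasing, and one compares with the explicit Gausson-type solution. Alternatively, one uses uniqueness of the trivial solution among those with $|u|$ small, via $u^2\log u^2\le0$. I expect this to need a separate short argument.

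The sign relation $u_r v>0$ at $z_{k+1}(\alpha)$ for $\alpha>\alpha_k^{\log}$, and hence $z_{k+1}'<0$ via \eqref{eq:intro-zero-derivative}, is taken from the Liu--Sun--Zou interlacing for $n\ge3$. For $n=2$ I would rerun their phase-transition/Sturm comparison argument, since it only uses Steps 1--2. The same input gives the phase structure: $u(\cdot,\alpha)$ has at least $k+1$ zeros iff $\alpha>\alpha_k^{\log}$. Uniqueness in the theorem follows from injectivity of $z_{k+1}$, together with the fact that any radial Dirichlet solution is a shooting solution. Finally, $(\beta_k^{\log})'=1/z_{k+1}'$ gives the stated formula.

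\textbf{Step 3: range and endpoints.} As $\alpha\downarrow\alpha_k^{\log}$, continuous dependence on compacts and the fact that $u(\cdot,\alpha_k^{\log})$ has exactly $k$ zeros force $z_{k+1}\to\infty$. Otherwise a limit zero would give a $(k+1)$-st zero of $u_k^{\log}$, or a double zero, which is excluded. The same argument yields \eqref{eq:main-log-convergence}: $C^1_{\rm loc}$ convergence comes from continuous dependence, and $C^2$ away from $Z_k$ comes from the equation.

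For $\alpha\to\infty$, I would use the scaling $u(r)=\alpha w(s)$ with $s=\lambda r$ and $\lambda^2=\log\alpha^2$. This gives
\[
w''+\tfrac{n-1}{s}w'+w+\lambda^{-2}w\log w^2=0 .
\]
Hence $w\to\Phi_n$ in $C^1_{\rm loc}$, and $z_{k+1}(\alpha)\lambda\to\rho_{k+1}$. In particular $z_{k+1}\to0$, and the leading term of \eqref{eq:main-log-limits} follows.

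\textbf{Main obstacle: the second-order term.} This is the expansion $\lambda^2 R^2=\rho_{k+1}^2+\kappa_{k+1,n}R^2+o(R^2)$. Write $w=\Phi+\lambda^{-2}\psi+o(\lambda^{-2})$, where
\[
L\psi:=\psi''+\tfrac{n-1}{s}\psi'+\psi=-\Phi\log\Phi^2 .
\]
The $o$-remainder must be justified uniformly up to $s=\rho_{k+1}+1$, using integrability of $\log|\Phi|$ near the zeros of $\Phi$. The zero then shifts as $s_{k+1}=\rho_{k+1}-\lambda^{-2}\psi(\rho_{k+1})/\Phi'(\rho_{k+1})+o(\lambda^{-2})$.

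To compute $\psi(\rho_{k+1})$, multiply the $\psi$ equation by $s^{n-1}\Phi$ and integrate (Green's identity). This gives
\[
\rho^{n-1}\Phi'(\rho)\psi(\rho)=\int_0^\rho s^{n-1}\Phi^2\log\Phi^2\,ds .
\]
Then
\[
R^2\log\alpha^2=s_{k+1}^2=\rho^2-2\rho\lambda^{-2}\frac{\psi(\rho)}{\Phi'(\rho)}+\cdots .
\]
Dividing by $R^2$ and using $\lambda^{-2}\approx R^2/\rho^2$ yields exactly $\kappa_{k+1,n}$ as defined in \eqref{eq:main-log-kappa}.

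Positivity of $\kappa$ would be shown separately, from $\int_0^{\rho_j} s^{n-1}\Phi^2\log\Phi^2<0$ since $|\Phi_n|\le1$. The delicate point is the uniform $o(\lambda^{-2})$ control of the remainder past nodal zeros. I plan to handle it with the integrable-weight Gronwall estimate from Step 1, applied to the rescaled problem.
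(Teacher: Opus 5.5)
Your architecture matches the paper's: the zero curve $z_{k+1}$, Cauchy theory across simple zeros with a logarithmically integrable weight, the Liu--Sun--Zou phase input, simple-zero stability as $\alpha\downarrow\alpha_k^{\log}$, rescaling by $\sqrt{\log\alpha^2}$, and Green's identity for the second profile. Your computation reproduces $\kappa_{k+1,n}$ exactly. The remainder is easier than you expect. $\lambda^2(w-\Phi_n)$ solves the Bessel equation with forcing $-w\log w^2$, and this forcing converges uniformly because $t\mapsto t\log t^2$ is uniformly continuous on $[-1,1]$. So no singular Gronwall estimate is needed. Two steps, however, are not actually supplied.

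\emph{The planar interlacing.} Rerunning the Liu--Sun--Zou argument for $n=2$ is what the paper does. But your justification, ``since it only uses Steps 1--2'', misses the one step that does not transfer. Their interlacing of the zeros of $v$ with those of $u$ is proved for $n\ge3$. The dimension enters through the sign of the bridge term $Q_n$ in $B_a'=-2F_a(u)Q_n/(ru'^2)$, not through the Cauchy theory.

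In the plane $Q_n=Q_2=Q_1+ru'v$. You must first establish its positivity where the bridge identity is used. You must then re-derive $T_2>0$ on the outgoing part $[\bar b_i,c_i]$ of every nodal phase:
\begin{itemize}
\item critical points of $T_2$ there are strict minima, because $T_2''=2uQ_2/(ru')>0$ at such points;
\item at the minimum one integrates $B_{a_i}'$ from a matching point $t_i$ on the incoming side, with $a_i=\log U_i^2-1$;
\item this integration requires checking that $B_{a_i}$ extends continuously through the nodal zero with integrable derivative.
\end{itemize}
This is Proposition~\ref{prop:log-planar}. Without it you have no $z_{k+1}'<0$ when $n=2$. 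Hence you have neither the injectivity behind uniqueness nor the $C^1$ inverse.

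\emph{Double zeros.} You flag this step but do not supply it, and your hints do not work as stated. Comparison with the Gausson does not bear on the question. The energy-sign hint covers at most the forward direction.

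Even the forward direction needs an Osgood-type divergence. After a double zero, $E\le0$ gives $|u'|\le|u|\sqrt{1-\log u^2}$. Since $\int_0^\sigma ds/(s\sqrt{1+|\log s^2|})=\infty$, $u$ cannot leave $0$.

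The backward direction is a nontrivial solution arriving at a double zero at finite radius, which is exactly the compact-support mechanism of the sublinear problem. Here $E\ge0$ before the zero, so the energy sign is unhelpful. The paper first shows that $u$ is monotone and convex there. It then uses $E(r)\le\frac{n-1}{r_0}(z-r)u'^2(r)$ to recover $\frac14u'^2\le-F(u)$.

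A shorter route is available to you. The map $t\mapsto t\log t^2$ is log-Lipschitz near $0$. So Osgood's theorem gives uniqueness for the first-order radial system at every $r_0>0$ in both directions, which excludes interior and boundary double zeros at once. The origin is handled by the forward energy argument.

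You need this step in three places:
\begin{itemize}
\item for the simplicity of the boundary zero;
\item for identifying every radial Dirichlet solution with a shooting orbit, including excluding $U(0)=0$;
\item in your own compactness argument for $z_{k+1}\to\infty$.
\end{itemize}
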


\begin{remark*}
{\rm The two-dimensional case requires separate attention. Tang's original
argument \cite{Tang} for the power nonlinearity relies on the assumption $n\geq3$
in establishing the positivity of an auxiliary quantity.
Zhang--Zhang \cite{ZhangZhang} observed that, when $n=2$, the relevant bridge quantity $Q_n$
reduces to $Q_2$, whose positivity is already available from the phase
induction. The same modification applies to the logarithmic problem
and completes the phase-transition argument in dimension two. {We
give the details of the planar logarithmic argument in
Proposition~\ref{prop:log-planar}, because they are needed to establish
Theorem~\ref{thm:log-whole} for $n=2$.}}
\end{remark*}

For the next theorem, define
\begin{equation}\label{eq:sub-constant-intro}
 \mathfrak c_{j,q,n}
 :=\frac{\displaystyle\int_0^{\rho_j}
 s^{n-1}|\Phi_n(s)|^q\,ds}
 {\rho_j^{n-1}\Phi_n'^2(\rho_j)}>0.
\end{equation}

\begin{theorem}\label{thm:sub-main}
Let $n\ge2$, $1<q<2$, and $k\ge0$. Let $\alpha_k^{\rm sub}$ be the
shooting height of the unique compactly supported $k$-node sublinear bound
state $u_k^{\rm sub}$, and let $S_k$ be its support radius. Then
$S_k>\rho_{k+1}$.

A radial solution $U$ of
\begin{equation}\label{eq:main-sub-ball}
 \begin{cases}
 \Delta U+U-|U|^{q-2}U=0,&x\in B_R,\\
 U=0,&x\in\partial B_R,
 \end{cases}
\end{equation}
with $U(0)>0$, whose zero set in $(0,R)$ consists of exactly $k$ simple
zeros and whose boundary zero is simple, exists if and only if
\[
 R\in(\rho_{k+1},S_k).
\]
For every such $R$, this solution is unique and is denoted by
$U^{\rm sub}_{k,R}$. Every nontrivial radial solution in the same
simple-zero class is either $U^{\rm sub}_{k,R}$ or
$-U^{\rm sub}_{k,R}$.

The shooting height $\beta_k^{\rm sub}(R):=U^{\rm sub}_{k,R}(0)$ is
$C^1$ and strictly decreasing on $(\rho_{k+1},S_k)$. With
$u(\cdot,\alpha)$ denoting the shooting solution and
$v=\partial_\alpha u$ its variation, one has
\begin{equation*}
 \bigl(\beta_k^{\rm sub}\bigr)'(R)
 =-\frac{u_r(R,\beta_k^{\rm sub}(R))}
 {v(R,\beta_k^{\rm sub}(R))}<0.
\end{equation*}
Moreover,
\begin{equation}\label{eq:main-sub-limits}
 \lim_{R\uparrow S_k}\beta_k^{\rm sub}(R)=\alpha_k^{\rm sub},
 \qquad
 \lim_{R\downarrow\rho_{k+1}}
 (R-\rho_{k+1})\bigl(\beta_k^{\rm sub}(R)\bigr)^{2-q}
 =\mathfrak c_{k+1,q,n}.
\end{equation}
If $\widetilde U^{\rm sub}_{k,R}$ denotes the zero extension of the
{radial profile} $U^{\rm sub}_{k,R}$ from {$[0,R]$ to $[0,S_k]$}, then
\begin{equation}\label{eq:main-sub-convergence}
 \widetilde U^{\rm sub}_{k,R}\longrightarrow u_k^{\rm sub}
 \quad\text{in }W^{1,\infty}(0,S_k)
 \quad\text{as }R\uparrow S_k.
\end{equation}
In particular, the convergence holds {in $C^1_{\rm loc}([0,S_k))$ and in
$C^2_{\rm loc}$ away from the limiting nodal zeros}.
The zero extension is used only to compare functions on the fixed interval
$[0,S_k]$; it is not asserted to solve the equation classically across the
simple zero $r=R$.

Let $\sigma_k=(-1)^k$ and
\begin{equation*}
 C_q:=\left(\frac{(2-q)^2}{2q}\right)^{\frac{1}{2-q}}.
\end{equation*}
The terminal nodal domain of the limiting bound state has the exact
free-boundary expansion
\begin{align}
 \sigma_k u_k^{\rm sub}(r)
 &=C_q(S_k-r)^{\frac{2}{2-q}}(1+o(1)),\label{eq:main-free-u}\\
 -\sigma_k (u_k^{\rm sub})'(r)
 &=\frac{2C_q}{2-q}(S_k-r)^{\frac{q}{2-q}}(1+o(1)),\label{eq:main-free-du}\\
 \sigma_k (u_k^{\rm sub})''(r)
 &=C_q^{q-1}(S_k-r)^{\frac{2(q-1)}{2-q}}(1+o(1)),\label{eq:main-free-ddu}
\end{align}
as $r\uparrow S_k$.
\end{theorem}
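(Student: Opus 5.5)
We plan to reduce Theorem~\ref{thm:sub-main} to the properties of the zero curve $\alpha\mapsto z_{k+1}(\alpha)$ on the maximal shooting class
\[
\mathcal A_k:=\{\alpha>0:\ u(\cdot,\alpha)\text{ has at least }k+1\text{ simple zeros and no double zero before }z_{k+1}(\alpha)\}.
\]
\textbf{Step 1 (local theory and the class $\mathcal A_k$).} Near a simple zero $r_0$ the term $|u|^{q-2}u$ is only H\"older, so uniqueness requires an argument. We write $u$ as a graph over $r$ with $u'(r_0)\ne0$ and invert locally, or equivalently use $|u(r)|\ge c|r-r_0|$ and the integrability of $|u|^{q-2}$ near $r_0$. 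This gives uniqueness and continuous dependence through simple zeros. The same integrability makes the variation equation $v''+\frac{n-1}{r}v'+(1-(q-1)|u|^{q-2})v=0$ well posed, with a locally integrable coefficient, and gives $C^1$ dependence of $u$ on $\alpha$ up to $z_{k+1}(\alpha)$. Consequently $\mathcal A_k$ is open, $z_{k+1}$ is $C^1$ on it, and formula \eqref{eq:intro-zero-derivative} holds.

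\textbf{Step 2 (transversality, connectedness, uniqueness).} From the Zhang--Zhang phase-transition theory \cite{ZhangZhang} we import the sign relation $u_r v>0$ at $z_{k+1}$, hence $z_{k+1}'<0$ on $\mathcal A_k$. We expect the structure $\mathcal A_k=(\alpha_k^{\rm sub},\infty)$, argued as follows.
\begin{itemize}
\item For $\alpha<\alpha_k^{\rm sub}$, the solution dies at a double zero, or stays in a nodal domain without reaching a $(k+1)$-st zero, before producing $k+1$ simple zeros.
\item For $\alpha>\alpha_k^{\rm sub}$ the zeros are simple. This is exactly the phase-transition classification.
\end{itemize}
Injectivity of $z_{k+1}$ then gives uniqueness. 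Any other solution in the simple-zero class is $\pm u(\cdot,\alpha)$ by the local uniqueness of Step~1, since $U'(0)=0$ and $U$ is regular at the origin (there $U(0)\ne0$, so the equation is Lipschitz).

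\textbf{Step 3 (endpoints).}
\begin{itemize}
\item \emph{Lower endpoint, $\alpha\to\infty$.} Set $u=\alpha w(s)$ with $s=r$. Then $w''+\frac{n-1}{s}w'+w=\alpha^{q-2}|w|^{q-2}w$. The right side tends to $0$ in $L^1_{\rm loc}$, so $w\to\Phi_n$ in $C^1$ and $z_{k+1}\to\rho_{k+1}$. For the rate we multiply the $w$-equation by $s^{n-1}\Phi_n$ and integrate by parts on $(0,z)$. The Wronskian identity gives
\[
z^{n-1}w'(z)\Phi_n(z)\approx \alpha^{q-2}\int_0^{\rho_{k+1}} s^{n-1}|\Phi_n|^q\,ds,
\]
and $\Phi_n(z)\approx\Phi_n'(\rho_{k+1})(z-\rho_{k+1})$. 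This yields $(z-\rho_{k+1})\alpha^{2-q}\to\mathfrak c_{k+1,q,n}$, which is positive, so $z>\rho_{k+1}$.
\item \emph{Upper endpoint, $\alpha\downarrow\alpha_k^{\rm sub}$.} Continuous dependence on $[0,S_k-\delta]$ (Step~1) combined with energy estimates near $S_k$ should give $z_{k+1}\to S_k$. We use the energy $E=\frac12u'^2+\frac{u^2}{2}-\frac{|u|^q}{q}$, which is nonincreasing, together with the fact that the limit has a double zero at $S_k$. The sup-norm closeness of $u'$ on $[z_{k+1},S_k]$ then gives the $W^{1,\infty}$ convergence \eqref{eq:main-sub-convergence}. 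Away from the nodal zeros the equation upgrades this to $C^2$.
\end{itemize}
The inequality $S_k>\rho_{k+1}$ follows because $z_{k+1}$ is decreasing with range $(\rho_{k+1},S_k)$. The range is exactly this interval by continuity and strict monotonicity, and inverting $z_{k+1}$ gives the stated $C^1$, strictly decreasing $\beta_k^{\rm sub}$ with the claimed derivative formula.

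\textbf{Step 4 (free boundary).} Near $S_k$ the dominant balance is $u''\approx |u|^{q-2}u$, since the terms $u$ and $\frac{n-1}{r}u'$ are of lower order. An ODE comparison, or a first integral $\frac12u'^2\approx |u|^q/q$, gives the profile $C_q(S_k-r)^{2/(2-q)}$. The derivative asymptotics \eqref{eq:main-free-du} follow from the first integral. The second-derivative law \eqref{eq:main-free-ddu} then follows from the equation, because $u''=-\frac{n-1}{r}u'-u+|u|^{q-2}u$ and the last term dominates.

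\textbf{Main obstacle.} We expect the hardest part to be the upper endpoint: proving $z_{k+1}\to S_k$ and the $W^{1,\infty}$ convergence near a double zero, where continuous dependence fails. The first approach is an energy/phase-plane argument showing that $u(\cdot,\alpha)$ cannot cross zero well before $S_k$ and must cross shortly after.
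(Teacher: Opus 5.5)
Your reduction to the zero curve, the imports from Zhang--Zhang, the uniqueness and inversion step, and the lower endpoint all line up with the paper. At the lower endpoint, taking the Wronskian of $w_\alpha$ directly against $\Phi_n$ is a valid shortcut compared with the paper's route through the second-order profile $\Psi_n$, once you record $|w_\alpha|\le 1$ from the energy.

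The genuine gap is the upper endpoint, which you leave at ``should give''. Continuous dependence on $[0,S_k-\delta]$, together with the double zero of $u_k$ at $S_k$, yields only $\liminf_{\alpha\downarrow\alpha_k}z_{k+1}(\alpha)\ge S_k$: no new zero appears before $S_k-\delta$. It gives no upper bound, because continuous dependence is lost at $S_k$. Nothing you state prevents the perturbed orbit from lingering near $0$ and crossing far to the right of $S_k$. What you need is $\limsup_{\alpha\downarrow\alpha_k}z_{k+1}(\alpha)\le S_k$; strict monotonicity then places every crossing before $S_k$, not ``shortly after'' it. The paper gets this bound by a terminal comparison:
\begin{itemize}
\item Zhang--Zhang's description of $v_k$ gives $\sigma_kv_k<0$ and $\sigma_kv_k'<0$ near $S_k$.
\item Hence $d_\beta=\sigma_k(u(\cdot,\beta)-u_k)$ satisfies $d_\beta(R)<0$ and $d_\beta'(R)<0$ at some $R$ close to $S_k$.
\item Since $f'<0$ on $(0,a)$, the divided-difference coefficient is negative, and a first-contact argument keeps $d_\beta<0$ and $d_\beta'<0$.
\item Then $0<\sigma_k u(\cdot,\beta)<\sigma_k u_k$ would persist up to $S_k$, which forces a zero of $u(\cdot,\beta)$ in $(R,S_k)$.
\end{itemize}

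Your energy idea can be completed instead, without the $v_k$ signs, but it needs two ingredients you do not state. Take $r_0$ beyond the last critical point of $u_k$ in $[0,S_k)$, with $w_k=\sigma_ku_k$ small at $r_0$.

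\emph{First ingredient: positive energy.} For $\beta>\alpha_k$ the classification gives a simple $(k+1)$-st zero, which lies beyond $r_0$ by continuous dependence. Monotonicity of $E$ then gives $E_\beta\ge\frac12u_r^2(z_{k+1}(\beta),\beta)>0$ on $[r_0,z_{k+1}(\beta)]$. A critical point with $w_\beta$ small would have $E_\beta=F(w_\beta)<0$, so $w_\beta$ is strictly decreasing there.

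\emph{Second ingredient: finite-time extinction from sublinearity.} On this interval $-w_\beta'>\sqrt{-2F(w_\beta)}\ge c\,w_\beta^{q/2}$. Hence $z_{k+1}(\beta)-r_0\le C\,w_\beta(r_0)^{(2-q)/2}$, which tends to $C\,w_k(r_0)^{(2-q)/2}$ and is small as $r_0\uparrow S_k$.

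The same monotonicity, together with $\frac12u_r^2\le E_\beta(r_0)+\max_{|s|\le w_\beta(r_0)}|F(s)|$, gives what the $W^{1,\infty}$ convergence really requires: uniform smallness of $u(\cdot,\beta)$ and $u_r(\cdot,\beta)$ on $[r_0,z_{k+1}(\beta)]$. Your ``closeness of $u'$ on $[z_{k+1},S_k]$'' addresses the easy interval, where the extension vanishes and only $u_k$ matters.

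Finally, in Step~4 the first integral $\frac12w'^2\approx w^q/q$ presupposes $E=o(w^q)$, which needs proof. The paper obtains it by using $w$ as the independent variable and integrating $dE/dw=\frac{n-1}{r}|w'|$, which gives $E\le Cw\sqrt{E+w^q}$.
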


\begin{remark*}
{\rm The restriction to the simple-zero class in
Theorem~\ref{thm:sub-main} is essential. Once a shooting profile reaches a
double zero, the sublinear Cauchy problem generally admits nonunique
continuations beyond that point. Such continuations are excluded from the
maximal simple-zero branch. The theorem neither classifies nor asserts
uniqueness among profiles with dead cores, waiting intervals, or repeated
departures from a double zero.}
\end{remark*}

\begin{theorem}\label{thm:branch-main}
Let $n\ge2$ and $k\ge0$. Put $\lambda=R^2$ and $W(y)=U(Ry)$. Then $W$ solves
\begin{equation}\label{eq:unit-parameter}
 \Delta W+\lambda f(W)=0\quad\hbox{in }B_1,
 \qquad W=0\quad\hbox{on }\partial B_1.
\end{equation}
{The positive-at-the-origin radial solutions described in
Theorems~\ref{thm:log-main} and \ref{thm:sub-main} form the following
strictly monotone no-fold branches on their maximal parameter intervals.}
\begin{enumerate}[label=\textup{(\alph*)}]
\item For $f=f_{\log}$, the central height
\[
 b_k^{\log}(\lambda):=\beta_k^{\log}(\sqrt\lambda)
\]
is a strictly decreasing $C^1$ bijection from $(0,\infty)$ onto
$(\alpha_k^{\log},\infty)$. Its endpoint laws are
\[
 \log\bigl(b_k^{\log}(\lambda)^2\bigr)
 =\frac{\rho_{k+1}^2}{\lambda}+\kappa_{k+1,n}+o(1)
 \quad(\lambda\downarrow0),
 \qquad
 b_k^{\log}(\lambda)\to\alpha_k^{\log}
 \quad(\lambda\to\infty).
\]
\item For $f=f_{\rm sub}$, the maximal simple-zero branch is
parameterized by
\[
 \lambda\in(\rho_{k+1}^2,S_k^2),
 \qquad
 b_k^{\rm sub}(\lambda):=\beta_k^{\rm sub}(\sqrt\lambda).
\]
It is $C^1$ and strictly decreasing, with
\begin{align*}
 (\lambda-\rho_{k+1}^2)
 \bigl(b_k^{\rm sub}(\lambda)\bigr)^{2-q}
 \longrightarrow2\rho_{k+1}\mathfrak c_{k+1,q,n}
 \quad({\lambda\downarrow\rho_{k+1}^2}),\qquad
 b_k^{\rm sub}(\lambda)\longrightarrow\alpha_k^{\rm sub}
 \quad(\lambda\uparrow S_k^2).
\end{align*}
Its closure at $\lambda=S_k^2$ has a double boundary zero and the
free-boundary profile \eqref{eq:main-free-u}--\eqref{eq:main-free-ddu}.
\end{enumerate}
In both cases
\begin{equation}\label{eq:branch-derivative}
 (b_k)'(\lambda)=\frac{\beta_k'(\sqrt\lambda)}{2\sqrt\lambda}<0,
\end{equation}
so neither branch contains a radial shooting fold.
\end{theorem}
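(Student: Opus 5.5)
This result is a reparametrization corollary of Theorems~\ref{thm:log-main} and \ref{thm:sub-main}, and the plan is to transfer each conclusion through the map $R\mapsto\lambda=R^2$.

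\textbf{Step 1: scaling.} For $W(y)=U(Ry)$ the chain rule gives $\Delta_y W(y)=R^2(\Delta U)(Ry)=-R^2 f(U(Ry))=-\lambda f(W(y))$. Dirichlet data on $\partial B_R$ become Dirichlet data on $\partial B_1$. Since $y\mapsto Ry$ is a diffeomorphism preserving radiality, the correspondence $U\leftrightarrow W$ preserves the following: regular radial solutions, sign at the origin, the number of interior zeros and their simplicity (zeros at $r$ map to $r/R$, and $W'(s)=RU'(Rs)$), and the central height $W(0)=U(0)$. Hence the radial solution classes on $B_1$ with parameter $\lambda$ are exactly the images of those on $B_{\sqrt\lambda}$. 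In particular, the central height equals $b_k(\lambda)=\beta_k(\sqrt\lambda)$ in both models, and uniqueness up to sign transfers verbatim.

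\textbf{Step 2: regularity, monotonicity and range.} The map $\lambda\mapsto\sqrt\lambda$ is a strictly increasing $C^\infty$ diffeomorphism from $(0,\infty)$ onto $(0,\infty)$, and it maps $(\rho_{k+1}^2,S_k^2)$ onto $(\rho_{k+1},S_k)$. Composing with the $C^1$ strictly decreasing maps $\beta_k^{\log}$ and $\beta_k^{\rm sub}$ gives $C^1$ strictly decreasing maps. In the logarithmic case the composite is a bijection onto $(\alpha_k^{\log},\infty)$. The chain rule yields \eqref{eq:branch-derivative}, and its negativity follows from the derivative formulas in the two theorems, where $v(R,\beta_k(R))\neq0$ and the strict sign relation holds. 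Maximality of the parameter intervals is inherited from the ``if and only if'' statement of Theorem~\ref{thm:sub-main} and from the full range $(0,\infty)$ in Theorem~\ref{thm:log-main}. Since a radial shooting fold would be a point where $b_k'$ vanishes or changes sign, \eqref{eq:branch-derivative} excludes folds.

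\textbf{Step 3: endpoint laws.} In the logarithmic case, substitute $R=\sqrt\lambda$ into \eqref{eq:main-log-limits}. This gives $\rho_{k+1}^2/R^2=\rho_{k+1}^2/\lambda$, and $\lambda\downarrow0$ is equivalent to $R\downarrow0$. In the sublinear case, write
\[
 \lambda-\rho_{k+1}^2=(R-\rho_{k+1})(R+\rho_{k+1}),
\]
with $R+\rho_{k+1}\to2\rho_{k+1}$. Then \eqref{eq:main-sub-limits} gives the limit $2\rho_{k+1}\mathfrak c_{k+1,q,n}$. The limit $b_k^{\rm sub}(\lambda)\to\alpha_k^{\rm sub}$ as $\lambda\uparrow S_k^2$ follows directly.

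\textbf{Step 4: closure at $\lambda=S_k^2$.} Define the limiting element as $W(y)=u_k^{\rm sub}(S_k|y|)$. By \eqref{eq:main-sub-convergence} the rescaled profiles converge to it in $W^{1,\infty}$ in the radial variable, since rescaling the fixed interval $[0,S_k]$ to $[0,1]$ preserves that norm up to constants. The expansions \eqref{eq:main-free-u}--\eqref{eq:main-free-ddu} show that $u_k^{\rm sub}$ and its derivative both vanish at $S_k$, so $W$ has a double boundary zero with the rescaled free-boundary profile.

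This step is the only point needing care: one must check that the rescaled limit is indeed the stated closure, and that the limit object lies outside the simple-zero class, so that it does not contradict maximality. Everything else is bookkeeping under the diffeomorphism $\lambda=R^2$.
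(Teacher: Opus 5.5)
Your proposal is correct and follows the paper's argument: everything is transported from Theorems~\ref{thm:log-main} and~\ref{thm:sub-main} through the increasing diffeomorphism $\lambda=R^2$. The factor $2\rho_{k+1}$ comes from $\lambda-\rho_{k+1}^2=(\sqrt\lambda-\rho_{k+1})(\sqrt\lambda+\rho_{k+1})$, \eqref{eq:branch-derivative} is the chain rule, and the closure at $\lambda=S_k^2$ is read off from Lemma~\ref{lem:sub-free-rate}.

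Two small wording points, neither of which affects validity. A shooting fold is a point where $z_{k+1}'=0$, i.e.\ where $b_k'$ would blow up rather than vanish; this is excluded anyway, since $b_k\in C^1$ with $b_k'<0$ makes the inverse $\lambda(b)$ a $C^1$ function with nonzero derivative. Your optional $W^{1,\infty}$ claim for the rescaled profiles needs uniform continuity of $(u_k^{\rm sub})'$ on $[0,S_k]$, because the profiles are rescaled by $R$ rather than by $S_k$.
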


\begin{theorem}\label{thm:morse-main}
Let $n\ge2$ and $k\ge0$, and let $U$ be any of the following stopped
profiles.
\begin{enumerate}[label=\textup{(\alph*)}]
\item For $1<p<p_{\rm S}$ and $R>0$, let
$U^{\rm pow}_{k,R}$ be the unique radial power-equation {solution} that is
positive at the origin and has exactly $k$ interior zeros. {Its existence
and uniqueness are exactly the power benchmark recalled above; see Tang
\cite{Tang} for $n\ge3$ and the planar extension of Zhang--Zhang
\cite{ZhangZhang} for $n=2$, together with the classical ground-state case.}
\item For $R>0$, let $U^{\log}_{k,R}$ be the {solution} in
Theorem~\ref{thm:log-main}.
\item For $1<q<2$ and $R\in(\rho_{k+1},S_k)$, let
$U^{\rm sub}_{k,R}$ be the {solution} in Theorem~\ref{thm:sub-main}.
\end{enumerate}
For each $U$ in \textup{(a)}--\textup{(c)}, let
\[
 L_U:=-\Delta-f'(U)
\]
{denote the Dirichlet linearization. In the logarithmic and sublinear
cases, the radial realization is defined by the closed, lower-bounded form
\eqref{eq:intro-radial-form}; the full realization is obtained by the
orthogonal direct sum of the closed spherical-harmonic sector forms described
in Section~\ref{sec:proof-morse}.} Then
\begin{equation*}
 \ker_{\rm rad}L_U=\{0\},
 \qquad
 m_{\rm rad}(U)=k+1.
\end{equation*}
For a spherical harmonic of degree $\ell$, let $N_\ell(U)$ be the number of
{negative Sturm eigenvalues in the radial problem associated with one
fixed degree-$\ell$ spherical harmonic.  The corresponding sector operator
has differential expression}
\begin{equation*}
 -\psi''-\frac{n-1}{r}\psi'
 +\frac{\ell(\ell+n-2)}{r^2}\psi-f'(U)\psi,
 \qquad \psi(R)=0,
\end{equation*}
with the {regular condition $\psi'(0)=0$ when $\ell=0$ and
$\psi(r)=O(r^\ell)$ as $r\downarrow0$ when $\ell\ge1$}. Then
\begin{equation*}
 N_0(U)=k+1,
 \qquad N_1(U)=k,
 \qquad 0\le N_\ell(U)\le k\quad(\ell\ge2),
\end{equation*}
and $N_\ell(U)=0$ for all sufficiently large $\ell$. {If
\[
 d_{0,n}=1,
 \qquad
 d_{\ell,n}=\frac{(2\ell+n-2)(\ell+n-3)!}
 {\ell!(n-2)!}\quad(\ell\ge1)
\]
is the dimension of the degree-$\ell$ spherical harmonics, the full Morse
index is the finite sum
\begin{equation}\label{eq:main-full-morse}
 m(U)=k+1+nk+\sum_{\ell=2}^{\infty}d_{\ell,n}N_\ell(U),
\end{equation}
where $m(U)$ denotes the Morse index of the full Dirichlet realization.}
{The full kernel is trivial in degrees zero and one; hence any nonradial
degeneracy can occur only in one of finitely many degrees $\ell\ge2$.}
\end{theorem}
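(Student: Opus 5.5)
The plan is to reduce every spectral statement to Sturm oscillation for a one-dimensional singular operator, with the shooting variation $v=\partial_\alpha u$ and the profile derivative $U'$ as test solutions.

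\textbf{Step 1: the radial count.} For a stopped profile $U=u(\cdot,\beta)$ on $(0,R)$, $v(\cdot,\beta)$ is the regular solution of the radial linearized equation
\[
\psi''+\tfrac{n-1}{r}\psi'+f'(U)\psi=0 .
\]
For the singular models I will first construct the radial form from \eqref{eq:intro-radial-form}. Near a simple zero $r_0$, the potential is $O(|\log|r-r_0||)$ in the logarithmic case and $O(|r-r_0|^{q-2})$ with $q-2>-1$ in the sublinear case. In both cases $f'(U)\in L^1(0,R)$, so by the one-dimensional Sobolev embedding the potential part is form-bounded with relative bound zero. I then approximate $f'(U)$ by truncations $V_m=\max(-m,\min(m,f'(U)))$. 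Monotone convergence of the forms gives norm-resolvent convergence, so eigenvalues converge. Since $L^1$ potentials still give absolutely continuous Wronskians, classical Sturm comparison and oscillation remain valid for the limiting problem. The result is that the number of negative Dirichlet eigenvalues equals the number of zeros of $v$ in $(0,R)$, and $0$ is an eigenvalue exactly when $v(R)=0$.

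The sign relation $u_r v>0$ at each zero $z_j$ (the phase-transition input from Tang, Liu--Sun--Zou and Zhang--Zhang, and the finite-ball statements via \eqref{eq:intro-zero-derivative}) gives $v(R)\neq0$, hence radial nondegeneracy. By Sturm separation against $U$, which has $k$ interior zeros plus the zero at $R$, $v$ has one zero in each interval $(z_{j-1},z_j)$ with $z_0=0$ and $z_{k+1}=R$. This gives exactly $k+1$ zeros, so $N_0=m_{\rm rad}=k+1$.

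\textbf{Step 2: degree one.} I use $U'$, which solves the $\ell=1$ sector equation (differentiate the radial ODE) and satisfies $U'(r)=O(r)$ at $0$. Between consecutive zeros of $U$ there is exactly one critical point, and there is none in $(z_k,R)$, since otherwise Sturm counting for the nonlinear ODE would fail. So $U'$ has exactly $k$ zeros in $(0,R)$, and $U'(R)\ne0$ because the boundary zero is simple. Sturm oscillation then gives $N_1=k$ and no kernel in the degree-one sector.

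\textbf{Step 3: higher degrees.} For $\ell\ge2$ the sector potential is increased by $(\ell(\ell+n-2)-(n-1))/r^2>0$ relative to $\ell=1$. By min-max comparison of the closed forms, $N_\ell\le N_1=k$, and the kernel in sector $\ell$ can be nontrivial only if that sector's lowest eigenvalues cross zero. Since $f'(U)$ is bounded above (by $pU^{p-1}$, by $2+\log\|U\|_\infty^2$, or by $1$), the Hardy term makes the form positive once $\ell(\ell+n-2)/R^2>\sup f'(U)$. Hence $N_\ell=0$ and the kernel is trivial for all large $\ell$.

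\textbf{Step 4: the full count.} The full form decomposes orthogonally over spherical harmonics, so
\[
m(U)=\sum_\ell d_{\ell,n}N_\ell ,\qquad d_{1,n}=n ,
\]
which is \eqref{eq:main-full-morse}.

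\textbf{Main obstacle.} I expect the hardest step to be justifying Sturm oscillation and the identification with $v$ for the singular potentials. This needs $v$ to lie in the form domain near singular zeros, and, in the sublinear case, uniform control as the truncation $m\to\infty$. I would handle this by proving eigenvalue continuity under the truncation and then using zero counting of the approximating regular solutions, which converge locally uniformly to $v$ by continuous dependence through simple zeros.
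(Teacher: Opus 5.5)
\textbf{The gap is in Step~1.} You derive ``one zero of $v$ in each $(z_{j-1},z_j)$'' from Sturm separation against $U$. Separation concerns two solutions of the \emph{same} linear equation. Here $U$ solves $(r^{n-1}U')'+r^{n-1}\frac{f(U)}{U}\,U=0$, while $v$ solves $(r^{n-1}v')'+r^{n-1}f'(U)\,v=0$.

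In all three models $f'(s)>f(s)/s$ for $s\ne0$. The Wronskian identity $\bigl(r^{n-1}(Uv'-U'v)\bigr)'=-r^{n-1}v\,\bigl(Uf'(U)-f(U)\bigr)$ then yields only the comparison half: $v$ has \emph{at least} one zero in $(0,z_1)$ and in each later nodal interval. The sign relation $u_rv>0$ at the zeros adds only that each interval contains an odd number of zeros.

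Nothing in your argument excludes three zeros in some interval, which would give $m_{\rm rad}(U)\ge k+3$. As written, Step~1 proves $v(R)\ne0$ and $m_{\rm rad}(U)\ge k+1$, not equality. The upper bound is the interlacing theorem of the phase-transition theory:
\begin{itemize}
\item the power case: Tang, with Zhang--Zhang for $n=2$;
\item the logarithmic case: Liu--Sun--Zou's Proposition~3.5 for $n\ge3$, and Proposition~\ref{prop:log-planar} for $n=2$;
\item the sublinear case: Zhang--Zhang's Lemma~5.1.
\end{itemize}
See Theorem~\ref{thm:log-whole} and Lemma~\ref{lem:sublinear-zero-curve}. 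The paper imposes this as hypothesis~3 of Proposition~\ref{prop:zero-curve-principle}, and it cannot be recovered from comparison. Once you cite it, your truncation argument is essentially the paper's Lemma~\ref{lem:radial-sturm} and Proposition~\ref{prop:zero-curve-principle}.

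\textbf{Step~2 also needs repair.} The phrase ``there is none in $(z_k,R)$'' contradicts Rolle's theorem, since $U(z_k)=U(R)=0$. The correct count is: no critical point in $(0,z_1)$, and exactly one in each $(z_j,z_{j+1})$ for $1\le j\le k$, with $z_{k+1}=R$.

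More importantly, ``otherwise Sturm counting for the nonlinear ODE would fail'' is not an argument. A nodal arc of a radial ODE can a priori contain several critical points, for instance by oscillating about the positive equilibrium before crossing zero. The missing step is the energy argument:
\begin{itemize}
\item $E=\frac12U'^2+F(U)$ is strictly decreasing. So at a critical point $c$ preceding a simple zero $z$, one has $F(U(c))=E(c)>E(z)>0$.
\item In each of the three models, $F(s)>0$ forces $sf(s)>0$. Hence $U''(c)=-f(U(c))$ has the sign opposite to $U(c)$, and every critical point is a strict local maximum of $|U|$.
\item Two strict local maxima of $|U|$ cannot lie in one nodal domain.
\item The same reasoning gives $F(U(0))=E(0)>0$ and $U''(0)=-f(U(0))/n<0$, which excludes a critical point in $(0,z_1)$.
\end{itemize}
With this count, use the substitution $\psi=r\eta$. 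It turns the degree-one sector into a regular radial problem in dimension $n+2$ whose regular zero-energy solution is $U'/r$, and your Steps~2--4 then coincide with the paper's argument.
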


Sections~\ref{sec:proof-log} and~\ref{sec:proof-sub} establish the two model
branches and their endpoint laws. Section~\ref{sec:parameter-branches}
proves their unified unit-ball formulation. Section~\ref{sec:proof-morse}
develops the singular Sturm argument and the angular decomposition.

\section{The logarithmic Dirichlet problem}
\label{sec:proof-log}

Throughout this section, we write $\alpha_k$ and $\beta_k$ for $\alpha_k^{\log}$ and $\beta_k^{\log}$, respectively.

\subsection{Logarithmic shooting and Cauchy theory}
{In this subsection, we set}
\begin{equation*}
 f(s)=s\log s^2,
 \qquad
 F(s)=\int_0^s f(t)\,dt=\frac12s^2(\log s^2-1),
\end{equation*}
where $f(0)=0$ and $F(0)=0$ are understood by continuity.
Then {$F(\pm e^{1/2})=0$}, $F(s)<0$ for $0<|s|<e^{1/2}$, and $F(s)>0$ for $|s|>e^{1/2}$.
For a radial solution $u$, define the associated energy function by
\begin{equation}\label{eq:energy}
 E(r):=\frac12 u'^2(r)+F(u(r)).
\end{equation}
A direct calculation gives
\begin{equation}\label{eq:Eprime}
 E'(r)=-\frac{n-1}{r}u'^2(r)\leq0.
\end{equation}

We first record a uniqueness and continuous-dependence result for radial linear equations with $L^1$-coefficients. It will be applied near the nodal zeros of $u$, where the coefficient $\log u^2$ is singular but locally integrable.

\begin{lemma}\label{lem:log-L1}
Let $I\subset[0,\infty)$ be a compact interval and let $a\in L^1(I)$. Consider
\begin{equation}\label{eq:linearL1}
 (r^{n-1}y')'+r^{n-1}a(r)y=0\quad{\text{for a.e. }r\in I}.
\end{equation}
If $0\notin I$, then for every fixed $r_0\in I$ and every $(y_0,y_1)\in \mathbb R^2$, there exists a unique function $y\in C^1(I)$ with $r^{n-1}y'\in AC(I)$ satisfying \eqref{eq:linearL1} and $y(r_0)=y_0$, $y'(r_0)=y_1$. If $0\in I$, then for every $y_0\in \mathbb R$, {there exists a unique regular solution} $y\in C^1(I)$ with $r^{n-1}y'\in AC(I)$ satisfying \eqref{eq:linearL1} and $y(0)=y_0$, $y'(0)=0$. Moreover, the solutions depend continuously on both the coefficient and the initial data. More precisely, if $a_m\to a$ in $L^1(I)$ and the corresponding initial data converge, then the associated solutions $y_m$ converge to $y$ in $C^1(I)$.
\end{lemma}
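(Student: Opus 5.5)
The plan is to recast \eqref{eq:linearL1} as a first-order system with integrable coefficients and then apply a Carathéodory/Gronwall argument, with a separate weighted Volterra treatment near $r=0$.

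\textbf{Case $0\notin I$.} Here $r^{n-1}$ is bounded above and below on $I$. Set $Y=(y,p)$ with $p=r^{n-1}y'$. Then \eqref{eq:linearL1} becomes
\[
Y'=A(r)Y,\qquad A(r)=\begin{pmatrix}0&r^{1-n}\\ -r^{n-1}a(r)&0\end{pmatrix},
\]
with $A\in L^1(I)$. Solutions in the stated class ($y\in C^1$, $p\in AC$) are exactly the absolutely continuous solutions of the integral equation $Y(r)=Y(r_0)+\int_{r_0}^rA(s)Y(s)\,ds$.

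Existence and uniqueness follow from Picard iteration. The $m$-th iterate difference is bounded by $\|Y_0\|\,\bigl(\int_{r_0}^r\|A\|\bigr)^m/m!$, so the series converges uniformly. Uniqueness comes from Gronwall's inequality applied with the integrable weight $\|A\|$.

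For continuous dependence, let $a_m\to a$ in $L^1$. Subtracting the two integral equations gives
\[
|Y_m-Y|(r)\le |Y_m(r_0)-Y(r_0)|+\int\|A_m-A\|\,|Y|+\int\|A_m\|\,|Y_m-Y|.
\]
Since $\int\|A_m\|$ is uniformly bounded, Gronwall yields $\sup_I|Y_m-Y|\to0$. Hence $y_m\to y$ uniformly, and $y_m'=r^{1-n}p_m\to y'$ uniformly, which is $C^1(I)$ convergence.

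\textbf{Case $0\in I$.} Write $I=[0,b]$. Integrating \eqref{eq:linearL1} twice with $y'(0)=0$ gives the Volterra equation
\[
y(r)=y_0-\int_0^r t^{1-n}\int_0^t s^{n-1}a(s)y(s)\,ds\,dt
= y_0-\int_0^r K(r,s)\,a(s)y(s)\,ds,
\]
where $K(r,s)=s^{n-1}\int_s^r t^{1-n}\,dt$. For $n\ge2$ and $0\le s\le r\le b$ this kernel is bounded: $0\le K\le r-s\le b$. Indeed $t^{1-n}\le s^{1-n}$ for $t\ge s$.

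So the map $T:y\mapsto y_0-\int_0^rKay$ on $C(I)$ satisfies
\[
|(T^m y-T^m z)(r)|\le \frac{\bigl(b\int_0^r|a|\bigr)^m}{m!}\,\|y-z\|_\infty .
\]
Some iterate of $T$ is therefore a contraction, which gives a unique continuous fixed point.

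Next I recover the regularity. From $y'(r)=-r^{1-n}\int_0^rs^{n-1}ay$ we get $|y'(r)|\le\|y\|_\infty\int_0^r|a|\to0$ as $r\to0$. So $y\in C^1(I)$ with $y'(0)=0$, and $r^{n-1}y'$ is absolutely continuous.

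Conversely, any regular solution in the stated class satisfies this integral equation, because $r^{n-1}y'\to0$ at $0$. Uniqueness therefore follows from that of the fixed point.

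For continuous dependence, the same Volterra estimate gives
\[
\|y_m-y\|_\infty\le C\bigl(|y_{0,m}-y_0|+\|a_m-a\|_{L^1}\|y\|_\infty\bigr),
\]
with $C$ uniform because $\|a_m\|_{L^1}$ is bounded. For the derivatives, write $r^{n-1}(y_m'-y')=-\int_0^rs^{n-1}(a_my_m-ay)$. Dividing by $r^{n-1}$ and using $s\le r$ gives
\[
|y_m'-y'|\le \|a_m\|_{L^1}\|y_m-y\|_\infty+\|a_m-a\|_{L^1}\|y\|_\infty\to0
\]
uniformly, including near $0$.

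\textbf{Main obstacle.} The only delicate point is the singular endpoint $r=0$. There the naive first-order system has the non-integrable coefficient $r^{1-n}$, which is why I use the weighted Volterra formulation instead. The kernel bound $K(r,s)\le r-s$ makes the estimates uniform in $r$. An interval $I$ that contains $0$ only at its left end is exactly this case, and $I$ cannot contain $0$ in its interior since $I\subset[0,\infty)$.
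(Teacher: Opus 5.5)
Your proof is correct and follows essentially the same route as the paper: a first-order system with $L^1$ coefficients and Gronwall away from the origin, and the weighted Volterra equation at $r=0$. The differences are only technical. Your pointwise kernel bound $K(r,s)\le r-s$ covers every $n\ge2$ at once, whereas the paper treats $n>2$ and $n=2$ separately, using a logarithmic estimate for the planar case. Your factorial bound on the iterates gives the solution on all of $[0,b]$ directly, whereas the paper contracts on a small $[0,\delta]$ and then continues using the first case.
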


\begin{proof}
Assume first that $0\notin I$. Set $p=r^{n-1}y'$. Then \eqref{eq:linearL1} is equivalent to the first-order system
\[
 y'=r^{1-n}p,
 \qquad
 p'=-r^{n-1}a(r)y.
\]
The coefficient matrix of the first-order system belongs to $L^1(I)$, so existence follows from the associated Volterra integral equation. To prove uniqueness, let two solutions have the same Cauchy data at $r_0$, and denote their difference again by $(y,p)$. For $r\ge r_0$, we have
\[
 |y(r)|+|p(r)|\leq C\int_{r_0}^r(1+|a(s)|)(|y(s)|+|p(s)|)\,ds.
\]
An analogous estimate holds for $r\le r_0$. Gronwall's inequality therefore yields $y\equiv p\equiv0$, and hence uniqueness.
Applying the same argument to the difference of two systems with coefficients $a$ and $\tilde a$ yields continuous dependence of $(y,p)$ in $C(I)\times C(I)$ on both the coefficient and the Cauchy data. Since $I$ is compact and bounded away from the origin, $r^{1-n}$ is bounded on $I$. Recalling that
$y'(r)=r^{1-n}p(r)$,
we conclude that convergence of $(y,p)$ in $C(I)\times C(I)$ implies convergence of $y$ in $C^1(I)$.

It remains to treat the regular initial-value problem at the origin. For a regular solution, the condition $y'(0)=0$ corresponds to $p(0)=0$. Integrating the equation twice, we obtain the Volterra integral equation
\begin{equation}\label{eq:volterra-origin}
 y(r)=y_0-\int_0^r t^{1-n}\int_0^t s^{n-1}a(s)y(s)\,ds\,dt.
\end{equation}
The corresponding integral operator is of Volterra type. The key estimate is
\begin{equation}\label{eq:origin-kernel-estimate}
 \int_0^r t^{1-n}\int_0^t s^{n-1}|a(s)|\,ds\,dt
 \leq C r\int_0^r |a(s)|\,ds.
\end{equation}
Indeed, when $n>2$, changing the order of integration gives
\[
 \int_0^r t^{1-n}\int_0^t s^{n-1}|a(s)|\,ds\,dt=\int_0^r s^{n-1}|a(s)|\int_s^r t^{1-n}\,dt\,ds
 \leq C\int_0^r s|a(s)|\,ds
 \leq C r\int_0^r |a(s)|\,ds.
\]
When $n=2$, the same quantity becomes
\[
 \int_0^r s|a(s)|\log\frac r s\,ds.
\]
 Since $s\log(r/s)\leq C r$ for $0<s<r$, estimate \eqref{eq:origin-kernel-estimate} also holds in dimension two. By \eqref{eq:volterra-origin}, the norm of the integral operator on $C([0,\delta])$ is bounded by
$C\delta\int_0^\delta |a(s)|ds$.
Since $\delta\int_0^\delta |a(s)|ds\rightarrow0$ as $\delta\downarrow0$, we may choose $\delta>0$  sufficiently small so that this bound is less than $1$. The integral operator in \eqref{eq:volterra-origin} is therefore a contraction on $C([0,\delta])$. This yields a unique solution near the origin. The solution can then be continued uniquely by applying the preceding result on compact intervals bounded away from the origin.
 Continuous dependence near the origin follows by applying the same estimates to the difference of the corresponding integral equations. Moreover, $y'(r)=-r^{1-n}\int_0^r s^{n-1}a(s)y(s)\,ds$,
 and hence
 $|y'(r)|\leq\int_0^r |a(s)y(s)|\,ds$. The analogous estimate for the difference of two solutions, together with $a_m\rightarrow a$ in $L^1(I)$ and $y_m\rightarrow y$ in $C(I)$, yields uniform convergence of the derivatives near $r=0$. Away from the origin, convergence of the derivatives follows from the continuous-dependence result already proved. Consequently, the convergence holds in $C^1(I)$. This completes the proof.
\end{proof}

\begin{lemma}\label{lem:log-global}
For every $\alpha>0$, the solution of \eqref{eq:intro-shooting-unified} exists globally on $[0,\infty)$ and is unique within the class of regular radial solutions. More generally, no nontrivial {regular} radial solution {in the class considered here} can have a double zero, either at an interior point or at an endpoint of its interval of definition, with derivatives at endpoints understood in the one-sided sense. Consequently, every zero of a nontrivial shooting solution is simple.
Moreover, if $u(z)=0$ and $u'(z)\ne0$, then
\begin{equation}\label{eq:logL1}
 \log u^2(r)+2=2\log|r-z|+O(1)
 \quad\hbox{as }r\to z,
\end{equation}
and therefore $\log u^2+2\in L^1_{\rm loc}$.
\end{lemma}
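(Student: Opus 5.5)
Local existence comes first. Because $f(s)=s\log s^2$ is continuous with $f(0)=0$, Peano's theorem (or a Schauder fixed point for the Volterra form $u(r)=\alpha-\int_0^r t^{1-n}\int_0^t s^{n-1}f(u(s))\,ds\,dt$) gives a local regular solution near the origin. Global existence then follows from the energy \eqref{eq:energy}. By \eqref{eq:Eprime}, $E(r)\le E(0)=F(\alpha)$. Since $F(s)\to+\infty$ as $|s|\to\infty$ and $F$ is bounded below by $\min F=-\tfrac12$, the quantities $|u|$ and $|u'|$ stay bounded on any interval of existence, so the solution continues to all of $[0,\infty)$.

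Uniqueness will be reduced to excluding double zeros together with Lemma~\ref{lem:log-L1}. Away from zeros of $u$, $f$ is locally Lipschitz. At the origin we have $u(0)=\alpha>0$, so $f$ is Lipschitz near $u=\alpha$ and the standard contraction argument for the Volterra equation gives uniqueness on $[0,\delta]$. Now let $u_1,u_2$ be two regular solutions with the same data, and let $r_*$ be the supremum of the set where they agree. Then $u_1(r_*)=u_2(r_*)$ and $u_1'(r_*)=u_2'(r_*)$. If $u_1(r_*)\ne0$, local Lipschitz uniqueness extends the agreement beyond $r_*$, a contradiction. If $u_1(r_*)=0$, the no-double-zero claim gives $u_1'(r_*)\neq0$. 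In that case I write $w=u_1-u_2$ and
\[
 f(u_1)-f(u_2)=a(r)\,w,\qquad a(r)=\int_0^1 f'\bigl(u_2+\theta w\bigr)\,d\theta,
\]
where $f'(s)=\log s^2+2$. Both $u_i$ satisfy $|u_i(r)|\ge c|r-r_*|$ near $r_*$. Hence every convex combination $u_2+\theta w$ also satisfies $|u_2+\theta w|\ge c|r-r_*|$ near $r_*$, because both $u_i$ have the same sign on each side of $r_*$. This gives $|a(r)|\le C(1+|\log|r-r_*||)\in L^1$. Then $w$ solves \eqref{eq:linearL1} with zero Cauchy data at $r_*$, and Lemma~\ref{lem:log-L1} forces $w\equiv0$ near $r_*$, again a contradiction.

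The main step is the exclusion of double zeros. Suppose $u(z)=u'(z)=0$, with $z$ an interior point or a one-sided endpoint. If $z>0$, then $E(z)=0$. Two routes seem available.
\begin{itemize}
\item \emph{Energy route.} Use that $F<0$ on $0<|s|<e^{1/2}$ and that $E$ is nonincreasing. For $r<z$ near $z$, $E(r)\ge E(z)=0$, so $\tfrac12u'^2\ge -F(u)\ge \tfrac12u^2|\log u^2|$ for small $u$. This implies $|u'|\ge |u|\sqrt{|\log u^2|}$, which makes $|u|$ reach $0$ in finite $r$ only in a way incompatible with the ODE. The comparison I would use is $\frac{d}{dr}\sqrt{|\log u^2|}$: integrating the inequality shows that $\sqrt{\log(1/|u|)}$ grows at most linearly, so $u$ cannot vanish at a finite point. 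This rules out approaching a double zero from the left. By monotonicity of $E$, the case $r>z$ is impossible anyway unless $u\equiv0$ there, since $E(r)\le0$ near $z$ from the right.
\item \emph{Sublinear-type route.} Alternatively, observe that near $0$ the term $u\log u^2$ acts like a superlinear-restoring force in the sense $|f(s)|\le C|s|\,|\log|s||$, and run an Osgood-type estimate on $|u|+|u'|$. Osgood's condition $\int_0 ds/(s\log(1/s))=\infty$ yields uniqueness of the zero solution from $(0,0)$ data.
\end{itemize}
I expect the Osgood route to be the cleanest. Let $\phi=|u|+|r^{n-1}u'|$ on a compact interval away from $0$; then $\phi'\le C\omega(\phi)$ with $\omega(s)=s\log(1/s)$, and the Osgood lemma gives $\phi\equiv0$ on the whole interval of definition. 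This contradicts nontriviality. If $z=0$, the regular condition $u'(0)=0$ with $u(0)=0$ reduces to the same argument applied to the Volterra form.

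Finally, \eqref{eq:logL1}: if $u(z)=0$ and $u'(z)\ne0$, then $u(r)=u'(z)(r-z)(1+o(1))$. Hence $\log u^2=2\log|r-z|+\log u'(z)^2+o(1)$, and $\log|r-z|$ is integrable.
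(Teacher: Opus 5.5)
Your proof is correct once you commit to the Osgood route, and that route is genuinely different from the paper's. The other steps match the paper: local existence, uniqueness across a simple zero via Lemma~\ref{lem:log-L1} with the lower bound on convex combinations, global continuation from the energy bound, and \eqref{eq:logL1}.

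\textbf{How the paper excludes double zeros.} It uses the energy, not the vector field.
To the right of a double zero $z$, $E\le E(z)=0$ gives the upper bound $\tfrac12u'^2\le -F(u)=\tfrac12u^2(1-\log u^2)$. The function $\Psi(\tau)=\int_\tau^\sigma ds/(s\sqrt{1+|\log s^2|})$ diverges as $\tau\downarrow0$, while $|\tfrac{d}{dr}\Psi(u)|\le C$, so $u$ cannot leave $0$.
To the left, energy dissipation points the wrong way, so the paper needs three extra steps.
\begin{enumerate}
\item It excludes zeros accumulating at $z$: a critical point $c_j<z$ near $z$ would satisfy $E(c_j)=F(u(c_j))<0=E(z)$, contradicting monotonicity.
\item It uses $u'<0<u''$ to get $E(r)=\int_r^z\frac{n-1}{s}u'^2\,ds\le\frac{n-1}{r_0}(z-r)u'^2(r)$, hence $\tfrac14u'^2\le-F(u)$.
\item Only then does it apply $\Psi$.
\end{enumerate}

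\textbf{How your route differs.} For $X=(u,r^{n-1}u')$ you have $|X(r)|\le C\,\bigl|\int_z^r\omega(|X(s)|)\,ds\bigr|$, with $\omega(s)=s\log(1/s)$ increasing on $(0,1/e)$ and $\int_0 ds/\omega(s)=\infty$. This estimate is reversible in $r$. One argument therefore covers both sides of $z$ and one-sided endpoints. Through the Volterra form, which gives $\max_{[0,r]}|u|\le C\int_0^r\omega(\max_{[0,t]}|u|)\,dt$, it also covers the origin. It uses no sign information on $F$ and applies to any $f$ with $|f(s)|\le C|s|\log(1/|s|)$ near $0$. The paper's argument instead exploits the specific energy structure, and in exchange needs only the weaker modulus $s\sqrt{1+|\log s^2|}$.

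Write the Osgood inequality in this two-sided integral form. The form $\phi'\le C\omega(\phi)$, read literally, only propagates the zero forward.

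\textbf{Discard the energy-route bullet.} For $r<z$, $E(r)\ge0$ yields the \emph{lower} bound $|u'|\ge|u|\sqrt{1+|\log u^2|}$. This bounds the growth of $\log(1/|u|)$ from below, not from above, so it cannot prevent $u$ from vanishing at $z$. For $r>z$, $E\le0$ alone does not force $u\equiv0$: negative energy is compatible with $0<|u|<e^{1/2}$, so you would still need the divergent-integral step. This asymmetry is exactly why the paper needs its separate left-side argument.
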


\begin{proof}
Local existence near the origin follows from the integral equation
\begin{equation}\label{eq:integralu}
 u(r)=\alpha-\int_0^r t^{1-n}\int_0^t s^{n-1}f(u(s))\,ds\,dt.
\end{equation}
Since $\alpha>0$, the nonlinearity $f$ is $C^1$ in a neighborhood of $\alpha$. The standard theory for regular radial initial-value problems therefore yields a unique local solution satisfying \eqref{eq:integralu}. At every point where $u\ne0$, the function $f$ is $C^1$, and hence the usual ODE existence and uniqueness theorem applies.

We next prove uniqueness across a simple zero. Let $z>0$, and suppose that $u$ and $\tilde u$ are two continuations across $z$ satisfying
\[
 u(z)=\tilde u(z)=0,
 \qquad
 u'(z)=\tilde u'(z)=d\ne0.
\]
Then there exist $c>0$ and small $\delta>0$ such that
\[
 |(1-\theta)u(r)+\theta\tilde u(r)|\geq c|r-z|
\]
for every $0<|r-z|<\delta$ and every $\theta\in[0,1]$. In particular, $u$ and $\tilde u$ have the same sign on each side of $z$.
Set $w=u-\tilde u$. Then
\[
 w''+\frac{n-1}{r}w'+A(r)w=0,
\]
where
\begin{equation*}
  A(r)=\int_0^1
f'\bigl((1-\theta) u(r)+\theta \tilde u(r)\bigr)\,d\theta.
\end{equation*}
Since $f'(s)=\log s^2+2$ for $s\ne 0$, the preceding lower bound
yields
\begin{equation*}
|A(r)|\le C\bigl(1+|\log|r-z||\bigr)
\end{equation*}
 for $0<|r-z|<\delta$.
Hence $A\in L^1(z-\delta,z+\delta)$. Moreover, $w(z)=w'(z)=0$. Lemma~\ref{lem:log-L1} therefore implies that $w\equiv0$ in a neighborhood of $z$. The usual uniqueness theorem away from the zero then extends this identity throughout the common interval of definition. Thus the continuation across a simple zero is unique.

If $u(z)=0$ and $u'(z)\ne0$, then $u(r)=(r-z)\eta(r)$, where $\eta$ is continuous near $z$ and satisfies $\eta(z)=u'(z)\ne0$. Hence $\eta$ is bounded away from zero in a neighborhood of $z$, and therefore
\[
 \log u^2(r)+2=2\log|r-z|+\log \eta^2(r)+2=2\log|r-z|+O(1)
 \quad\hbox{as }r\to z.
\]
This proves \eqref{eq:logL1}. Since $\log|r-z|$ is locally integrable, it also follows that $\log u^2+2\in L^1_{\rm loc}$.

It remains to rule out double zeros.  This is a local argument and
does not rely on the shooting condition $u(0)>0$.  Suppose that a nontrivial solution has a double zero. By replacing the double zero, if necessary, with an endpoint of a maximal interval on which $u\equiv0$, we may assume that
\[
u(z)=u'(z)=0,
\]
and that $u$ is nontrivial arbitrarily close to $z$ on at least one side.
Assume first that $u$ is nontrivial arbitrarily close to $z$ from the right. {Choose a connected component $(a,b_*)$ of
$\{r>z:u(r)\ne0\}$ whose left endpoint $a$ is arbitrarily close to
$z$. After replacing $b_*$ by a point $b\in(a,b_*)$, if necessary,
we may assume that $|u|<e^{1/2}$ on $(a,b)$.}
Then $u(a)=0$.  Since
$0\leq\frac12u'^2(a)=E(a)\le E(z)=0$, necessarily $u'(a)=0$ and $E(a)=0$.  After replacing $u$ by $-u$, if necessary, we may assume that
\[
 0<u(r)<e^{\frac{1}{2}}\quad \hbox{for }a<r<b.
\]
The monotonicity of the energy function $E$ gives $E(r)\le E(a)=0$ on $(a,b)$.
Consequently,
\[
 \frac12u'^2(r)\le -F(u(r))
 ={\frac12u^2(r)\bigl(1-\log u^2(r)\bigr)},
\]
and hence
 \begin{equation}\label{eqbdd}
 \frac{|u'(r)|}{u(r)\sqrt{1+|\log u^2(r)|}}\le C.
 \end{equation}
Fix $\sigma\in(0,e^{1/2})$ and define
\begin{equation*}
\Psi(\tau):=\int_\tau^\sigma\frac{ds}{s\sqrt{1+|\log s^2|}},
\qquad 0<\tau<\sigma.
\end{equation*}
The substitution $s=e^{-t}$ shows that $\Psi(\tau)\rightarrow+\infty$ as $\tau\downarrow0$, since the divergent part is comparable to $\int_0^\infty\frac{dt}{\sqrt{1+2t}}$. On the other hand, \eqref{eqbdd} gives $\left|\frac{d}{dr}\Psi(u(r))\right|\le C$ on $(a,b)$. Thus $\Psi(u(r))$ remains bounded as $r\downarrow a$, contradicting
$u(r)\to u(a)=0$. Hence $u$ cannot be nontrivial to the right of a double zero.

Having ruled out the possibility that $u$ is nontrivial arbitrarily close to $z$ from the right, we now consider the remaining case. Thus $u$ is nontrivial arbitrarily close to $z$ from the left. We first show that $u$ has no zeros arbitrarily close to $z$ from the left. Consequently, there exists $r_0<z$ such that $u$ has no zeros on
$[r_0,z)$. Suppose otherwise. Then there exist connected components $(a_j,b_j)\subset\{r<z:u(r)\ne0\}$ such that $b_j\uparrow z$. Then $u(a_j)=u(b_j)=0$.
On each component, $|u|$ attains a positive maximum at some point $c_j\in(a_j,b_j)$. Hence, $u'(c_j)=0$ and   $u(c_j)\ne0$.
Since $c_j\to z$ and $u(z)=0$, we also have $u(c_j)\to0$. In particular, $0<|u(c_j)|<e^{1/2}$ for all sufficiently large $j$. It follows that $E(c_j)=F(u(c_j))<0$. On the other hand,  $E(c_j)\ge E(z)=0$, which is a contradiction.  After decreasing $r_0$, if necessary, continuity and $u(z)=0$ allow us to assume that
$0<|u(r)|<1$ for $r_0\leq r<z$.
Thus $u$ has a fixed sign on $[r_0,z)$. Replacing $u$ by $-u$, if necessary, we may suppose that
\begin{equation*}
  0<u(r)<1
\quad\text{for }r_0\le r<z.
\end{equation*}
Since
\[
 (r^{n-1}u')'=-r^{n-1}f(u)>0,
 \qquad u'(z)=0,
\]
we have $u'<0$ on $[r_0,z)$.  Moreover,
\[
 u''=-\frac{n-1}{r}u'-f(u)>0.
\]
Thus $u'$ is increasing on $[r_0,z)$ and converges to $0$ as
$r\uparrow z$. In particular,
$|u'(s)|\le |u'(r)|$ for $r\le s<z$.  Since $E(z)=0$, integration of \eqref{eq:Eprime} gives
\[
 E(r)=\int_r^z\frac{n-1}{s}u'^2(s)\,ds
 \le \frac{n-1}{r_0}(z-r)u'^2(r),\qquad r_0\leq r<z.
\]
Choose $r_1\in(r_0,z)$ sufficiently close to $z$ so that
\[
\frac{n-1}{r_0}(z-r)\le\frac14
\quad\text{for }r_1\le r<z.
\]
Then
\[
 \frac14u'^2(r)\le -F(u(r))
 =\frac12u^2(r)\bigl(1-\log u^2(r)\bigr),\qquad r_1\le r<z.
\]
{Since $u>0$, $u'<0$, and $u(r)\to0$ as $r\uparrow z$, the
preceding differential inequality gives
$\left|\frac{d}{dr}\Psi(u(r))\right|\le C$ on $[r_1,z)$, with the
same function $\Psi$ as above. Thus $\Psi(u(r))$ remains bounded as
$r\uparrow z$, whereas $u(r)\to0$ implies
$\Psi(u(r))\to\infty$. This contradiction rules out a double zero
from the left.}
At the origin, only the right-sided argument is relevant. We conclude that every zero of a nontrivial solution, including a one-sided boundary zero on a finite interval, is simple.

Finally, finite-radius blow-up cannot occur. Since $F(u(r))\leq E(r)\leq E(0)$ and $F(s)\to\infty$ as $|s|\to\infty$, the function $u$ remains bounded on every {finite interval}. Moreover, if $M$ is a bound for $|u|$ on the interval under
consideration, then
\[
 \frac12u'(r)^2=E(r)-F(u(r))
 \le E(0)-\min_{|s|\le M}F(s).
\]
Thus $u'$ is bounded there as well. The standard ODE continuation criterion therefore excludes any finite maximal radius of existence. Hence the solution exists globally on $[0,\infty)$, and the proof is complete.
\end{proof}

\begin{lemma}\label{lem:log-C1}
For every $R>0$, the map $(0,\infty)\ni\alpha\mapsto u(\cdot,\alpha)\in C^1([0,R])$
is of class $C^1$. Its derivative with respect to $\alpha$,
\[
 v(\cdot,\alpha):=\partial_\alpha u(\cdot,\alpha)
\]
is the unique regular solution of
\begin{equation}\label{eq:var}
 v''+\frac{n-1}{r}v'+(\log u^2+2)v=0,
 \qquad
 v(0)=1,
 \qquad
 v'(0)=0,
\end{equation}
with the equation understood across the simple zeros of $u(\cdot,\alpha)$ in the sense of Lemma~\ref{lem:log-L1}.
If $z_j(\alpha_0)$ exists for some $\alpha_0>0$, then there is a neighborhood $J$ of
$\alpha_0$ such that $z_j(\alpha)$ is well defined in $J$ and
$z_j\in C^1(J)$. Moreover,
\begin{equation}\label{eq:zderivative}
 z_j'(\alpha)=-\frac{v(z_j(\alpha),\alpha)}{u_r(z_j(\alpha),\alpha)}\quad \hbox{for }\alpha\in J.
\end{equation}
\end{lemma}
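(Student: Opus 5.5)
The plan is to prove differentiability by a difference-quotient argument, reduced to the linear $L^1$ theory of Lemma~\ref{lem:log-L1}. The only genuine difficulty is the singular coefficient at the zeros of $u$.

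Fix $\alpha_0>0$ and $R>0$. By Lemma~\ref{lem:log-global}, $u(\cdot,\alpha_0)$ has finitely many zeros in $[0,R]$, all simple.

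\textbf{Step 1: continuity in $\alpha$.} First show that $\alpha\mapsto u(\cdot,\alpha)\in C^1([0,R])$ is continuous, and that the zeros move continuously and stay simple. Near the origin this follows from the contraction argument for \eqref{eq:integralu}. Away from the zeros, $f$ is $C^1$, so classical dependence applies. Across a simple zero $z$, note that $u(\cdot,\alpha)\to u(\cdot,\alpha_0)$ in $C^1$ on a window $[z-\delta,z+\delta]$. Then argue exactly as in the uniqueness proof of Lemma~\ref{lem:log-global}: the difference $w=u(\cdot,\alpha)-u(\cdot,\alpha_0)$ solves a linear equation with coefficient $A_\alpha$ satisfying $|A_\alpha(r)|\le C(1+|\log|r-z||)$. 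This bound is uniform for $\alpha$ near $\alpha_0$, because $|(1-\theta)u(r,\alpha)+\theta u(r,\alpha_0)|\ge c|r-z|$ uniformly once the derivatives are close. Gronwall, as in Lemma~\ref{lem:log-L1}, then gives continuity across the window. Finiteness of the zero set lets us chain these pieces over $[0,R]$. The implicit function theorem for continuous functions with $u_r\neq0$ gives continuity of each $z_j$.

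\textbf{Step 2: the difference quotient.} Set $w_h=(u(\cdot,\alpha_0+h)-u(\cdot,\alpha_0))/h$. It is the regular solution of
\[
(r^{n-1}w_h')'+r^{n-1}A_h w_h=0,\qquad w_h(0)=1,\ w_h'(0)=0,
\]
with $A_h=\int_0^1 f'\bigl((1-\theta)u(\cdot,\alpha_0)+\theta u(\cdot,\alpha_0+h)\bigr)d\theta$.

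The key claim, and the main obstacle, is that $A_h\to f'(u(\cdot,\alpha_0))=\log u^2+2$ in $L^1(0,R)$. Pointwise convergence holds off the finite zero set of $u(\cdot,\alpha_0)$. For domination, near each zero $z$ the interpolants satisfy $|(1-\theta)u(r,\alpha_0)+\theta u(r,\alpha_0+h)|\ge c|r-z(\alpha_0+h)|$ or a comparable bound. Since the zeros $z(\alpha_0+h)$ move, I would not look for a fixed dominating function. Instead I would use the elementary bound $|\log|r-z_h||$ with $z_h\to z$, and show these functions are uniformly integrable; this is immediate, since $\int_E|\log|r-z_h||\,dr$ is small when $|E|$ is small, uniformly in $z_h$. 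The bound is only needed for $|s|\le M$, and $f'(s)=\log s^2+2$ is bounded above there. Below, it is controlled by $2|\log c|r-z_h||$ once we check that all values between the two functions are at least $c|r-z_h|$ in absolute value. Here convexity in $\theta$ and the simple-zero structure give, for example, $\min(|u(r,\alpha_0)|,|u(r,\alpha_0+h)|)$ on the common-sign region. On the small interval between the two zeros, where the signs differ, the interpolant can vanish for some $\theta$. There I would instead use $\int_0^1|\log|a+\theta(b-a)||\,d\theta\le C(1+|\log|b-a||)$ with $|b-a|\ge c'|h|$ … or more simply, I would handle the mixed-sign set by its measure $O(|h|)$ and use the integrability of $\theta\mapsto\log|$linear in $\theta|$, which is bounded by $C(1+|\log|r-z||)$ after averaging. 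Vitali's theorem then gives the $L^1$ convergence.

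\textbf{Step 3: conclusion.} With $A_h\to\log u^2+2$ in $L^1$, Lemma~\ref{lem:log-L1} gives $w_h\to v$ in $C^1([0,R])$, where $v$ is the unique regular solution of \eqref{eq:var}. Continuity of $\alpha\mapsto v(\cdot,\alpha)$ in $C^1$ follows the same way: the coefficients $\log u(\cdot,\alpha)^2+2$ converge in $L^1$ by the uniform-integrability argument above. This proves the $C^1$ statement.

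For the zeros, $G(r,\alpha)=u(r,\alpha)$ is $C^1$ near $(z_j(\alpha_0),\alpha_0)$, with $G_r=u_r\ne0$ there. Here $u_r$ is jointly continuous and $\partial_\alpha G=v$ is continuous. The implicit function theorem therefore yields a $C^1$ zero curve $z_j$ on a neighborhood $J$, and differentiating $u(z_j(\alpha),\alpha)=0$ gives \eqref{eq:zderivative}. The count of zeros is preserved on $J$ by Step 1, so $z_j$ remains the $j$-th zero.
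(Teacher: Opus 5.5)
Your Steps 2 and 3 are essentially the paper's argument. The paper writes the coefficient as the divided difference $\frac{f(u(r,\alpha+h))-f(u(r,\alpha))}{u(r,\alpha+h)-u(r,\alpha)}$ and bounds it by $C(1+|\log|a||+|\log|b||)$. It uses the mean value theorem when $a,b$ have the same sign, and $|a-b|=|a|+|b|$ when they do not. This yields the moving majorant $1+|\log|r-z||+|\log|r-z_m||$, and the paper concludes by Vitali, Lemma~\ref{lem:log-L1}, and the implicit function theorem, as you do.

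\textbf{The gap is in Step 1.} There you assert $|(1-\theta)u(r,\alpha)+\theta u(r,\alpha_0)|\ge c|r-z|$ uniformly in $\theta\in[0,1]$, borrowed from the uniqueness proof of Lemma~\ref{lem:log-global}. That bound is false here. In Lemma~\ref{lem:log-global} the two solutions share the zero $z$ and its derivative. Now $u(\cdot,\alpha)$ vanishes at $z(\alpha)\neq z$ (take $\theta=0$, $r=z(\alpha)$), and each interpolant has its own zero between $z(\alpha)$ and $z$. This is exactly the phenomenon you point out yourself in Step 2. Moreover, ``once the derivatives are close'' presupposes $C^1$-closeness on the window, which is what Step 1 is meant to prove. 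As written the argument is circular and would at least need a bootstrap.

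\textbf{How to repair it.} The bound $|A_\alpha(r)|\le C(1+|\log|r-z||)$ is nevertheless true, for a different reason. For $a=u(r,\alpha)\neq b=u(r,\alpha_0)$,
\[
A_\alpha(r)=\frac{1}{b-a}\int_a^b(\log t^2+2)\,dt=2\log\max\{|a|,|b|\}+O(1),
\]
with an absolute $O(1)$; a direct computation on the same-sign and opposite-sign cases confirms this. Hence $|A_\alpha(r)|\le C_M+2\bigl|\log|u(r,\alpha_0)|\bigr|$ whenever $|a|,|b|\le M$, and $M$ comes from the energy bound alone. This is a fixed $L^1(0,R)$ majorant that requires no information about where $u(\cdot,\alpha)$ vanishes. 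Gronwall applied to the Volterra equation \eqref{eq:integralu} for the difference then gives (even Lipschitz) dependence in $C^1([0,R])$, with no windows or bootstrap. The same majorant also reduces your Step 2 to plain dominated convergence.

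Alternatively, adopt the paper's continuity argument. It uses uniform bounds on $u$ and $u_r$ from the energy, equicontinuity from the formula for $u_r$, and Arzel\`a--Ascoli. One then passes to the limit in \eqref{eq:integralu} and identifies the limit by the uniqueness in Lemma~\ref{lem:log-global}. This route never touches the singular coefficient at that stage, but it gives only qualitative continuity.

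\textbf{Two smaller points.} In Step 2, drop the alternative based on $|b-a|\ge c'|h|$. It would require $v(z)\neq0$, which is only established later in Theorem~\ref{thm:log-whole}, and that theorem relies on this lemma; your averaging alternative is the one that works. Also, enlarge $R$ so that $u(R,\alpha_0)\neq0$ before invoking zero-count stability on $[0,R]$, as the paper does.
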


\begin{proof}
Fix $R>0$ and an initial value $\alpha>0$.  Since the shooting profile
is global and its zeros are isolated, choose $\widehat R>R$ such that
$u(\widehat R,\alpha)\ne0$.  It suffices to prove all estimates on
$[0,\widehat R]$ and then restrict them to $[0,R]$; hence, after
renaming $\widehat R$, we may assume throughout the proof that
{$u(R,\alpha)\ne0$.} By Lemma~\ref{lem:log-global}, every zero of $u(\cdot,\alpha)$ is simple. Moreover, $u(\cdot,\alpha)$  has only finitely many zeros in $[0,R]$: otherwise, the zeros would have an accumulation point $z\in[0,R]$, and Rolle's theorem would yield critical points converging to $z$, so that $u(z)=u'(z)=0$, contradicting Lemma~\ref{lem:log-global}.

We also recall the stability of simple zeros. {If
$y_m\to y$ in $C^1([0,R])$, all zeros of $y$ are simple, and
$y(0)y(R)\ne0$, then, for all sufficiently large $m$, $y_m$ has
exactly one simple zero near each zero of $y$, no other zeros on
$[0,R]$, and these zeros converge to the corresponding zeros of $y$.}
Moreover,
\begin{equation}\label{eq:log-l1-convergence}
 \log y_m^2\longrightarrow \log y^2\quad\hbox{in }L^1([0,R]).
\end{equation}
Indeed, near a zero $z$ of $y$, we can write $y(r)=(r-z)\eta(r)$, where $\eta$ is bounded away from zero. By the implicit function theorem and the $C^1$-convergence, $y_m$ has a unique zero $z_m$ near $z$, with $z_m\to z$, and $y_m(r)=(r-z_m)\eta_m(r)$, where $|\eta_m|$ is uniformly bounded away from zero and infinity. Hence
\[
 \log y_m^2(r)=2\log|r-z_m|+\log\eta_m^2(r).
\]
Since the family $\log|r-z_m|$ is uniformly integrable and converges in $L^1$ to
$\log|r-z|$, while the convergence is uniform away from the zeros, we obtain \eqref{eq:log-l1-convergence}.
The same decomposition also controls the divided differences of $f(s)=s\log s^2$. For $a\ne b$ close to zero, we have
\[
 \left|\frac{f(a)-f(b)}{a-b}\right|
 \leq C\bigl(1+|\log |a||+|\log |b||\bigr).
\]
Indeed, if $a$ and $b$ have the same sign, the mean value theorem gives
\[
\frac{f(a)-f(b)}{a-b}=f'(\xi)
=\log \xi^2+2
\]
for some $\xi$ between $a$ and $b$, and the right-hand side is bounded by the
above expression. If $a$ and $b$ have opposite signs, then
$|a-b|=|a|+|b|$, and the estimate follows from
\[
|f(s)|\leq C|s|\bigl(1+|\log|s||\bigr)
\]
near the origin. Away from zero, the quotient is bounded by the usual mean value theorem.
Applying this estimate with $a=y_m(r)$ and $b=y(r)$, the factorization near a
simple zero yields the integrable majorant  $\bigl(1+|\log|r-z||+|\log|r-z_m||\bigr)$.
Therefore
\begin{equation}\label{eq:log-dd-l1-convergence}
 {\frac{f(y_m)-f(y)}{y_m-y}}
 \longrightarrow \log y^2+2\quad\hbox{in }L^1([0,R]),
\end{equation}
{where the quotient is assigned $f'(y)$ when $y_m=y\ne0$
and any finite value when $y_m=y=0$.}
{The quotients converge pointwise almost everywhere, and
the preceding logarithmic majorants form a uniformly integrable
family. Vitali's theorem therefore gives the asserted $L^1$
convergence; away from the zeros the convergence is uniform.}

We next prove continuous dependence on the shooting height in $C^1$. Let $\alpha_m\to\alpha$. On $[0,R]$,  the initial energies $E(0,\alpha_m)=F(\alpha_m)$ remain bounded. Since $F(s)\to\infty$ as $|s|\to\infty$, the energy identity yields a uniform bound for $\{u(\cdot,\alpha_m)\}$. Since $F$ is bounded below, using \eqref{eq:energy} once more gives a uniform bound for $\{u_r(\cdot,\alpha_m)\}$. Consequently, $f(u(\cdot,\alpha_m))$ is uniformly bounded on $[0,R]$. Differentiating \eqref{eq:integralu}, we obtain, for $r>0$,
\begin{equation}\label{eq:diff}
  u_r(r,\alpha_m)=-r^{1-n}\int_0^r s^{n-1}f(u(s,\alpha_m))\,ds,
 \qquad u_r(0,\alpha_m)=0.
\end{equation}
In particular, $|u_r(r,\alpha_m)|\le C_0r$ near the origin.
We next show that $\{u(\cdot,\alpha_m)\}$ and $\{u_r(\cdot,\alpha_m)\}$ are  equicontinuous. Near the origin, this follows from $|u_r(r,\alpha_m)|\le Cr$. On the other hand, if $0<\delta\leq r_1<r_2\leq R$, then \eqref{eq:diff} yields
\[
\begin{aligned}
 |u_r(r_1,\alpha_m)-u_r(r_2,\alpha_m)|
 &\leq \left|r_2^{1-n}-r_1^{1-n}\right|
       \int_0^{r_1}\tau^{n-1}|f(u(\tau,\alpha_m))|\,d\tau  \\
 &\quad +r_2^{1-n}\int_{r_1}^{r_2}\tau^{n-1}|f(u(\tau,\alpha_m))|\,d\tau
 \leq C_\delta |r_2-r_1|,
\end{aligned}
\]
where $C_\delta$ is independent of $\alpha_m$. {Given $\varepsilon>0$, choose $\delta>0$ so small that
$2C_0\delta<\varepsilon/2$, where $|u_r(r,\alpha_m)|\le C_0r$ on
$[0,\delta]$.  On $[\delta,R]$, use the uniform Lipschitz estimate
above to choose $\eta>0$ such that
$C_\delta\eta<\varepsilon/2$.  If $r_1<\delta<r_2$, split the
difference at $\delta$.  This gives a uniform modulus of continuity
for $\{u_r(\cdot,\alpha_m)\}$ on $[0,R]$.} The uniform boundedness of $\{u_r(\cdot,\alpha_m)\}$ further implies that $\{u(\cdot,\alpha_m)\}$ is equicontinuous.
It follows from the Arzel\`{a}--Ascoli theorem that $\{u(\cdot,\alpha_m)\}$ is
relatively compact in $C^1([0,R])$. Let $\tilde u$ be the $C^1$-limit of
a subsequence. Passing to the limit in the integral equation \eqref{eq:integralu} shows that $\tilde u$ solves the regular initial value problem \eqref{eq:intro-shooting-unified}. By Lemma~\ref{lem:log-global}, $\tilde u=u(\cdot,\alpha)$. Since every convergent subsequence has the same limit, the whole family satisfies  $u(\cdot,\alpha_m)\rightarrow u(\cdot,\alpha)$ in $C^1([0,R])$.

For $h\ne0$, set
\[
 w_h(r):=\frac{u(r,\alpha+h)-u(r,\alpha)}{h}.
\]
Then
\[
 w_h''+\frac{n-1}{r}w_h'+a_h(r)w_h=0,\qquad w_h(0)=1,\qquad w_h'(0)=0,
\]
where
\[
a_h(r)=
\begin{cases}
\displaystyle
\frac{f(u(r,\alpha+h))-f(u(r,\alpha))}
     {u(r,\alpha+h)-u(r,\alpha)},
& u(r,\alpha+h)\neq u(r,\alpha),\\[1ex]
f'(u(r,\alpha)),
& u(r,\alpha+h)=u(r,\alpha)\neq0,\\
0,
& u(r,\alpha+h)=u(r,\alpha)=0.
\end{cases}
\]
At points where $u(r,\alpha+h)=u(r,\alpha)=0$, we set $a_h(r)=0$. This choice is immaterial, since these points belong to the zero set of $u(\cdot,\alpha)$, which has measure zero. With arbitrary finite values assigned to the limiting coefficient at the zeros of $u(\cdot,\alpha)$, the logarithmic $L^1$-stability result \eqref{eq:log-dd-l1-convergence} gives
\[
 a_h\longrightarrow \log u^2(\cdot,\alpha)+2\quad\hbox{in }L^1([0,R]).
\]
Lemma~\ref{lem:log-L1} therefore implies that $w_h$ converges in $C^1([0,R])$ to the unique regular solution $v(\cdot,\alpha)$ of \eqref{eq:var}. Hence $\partial_\alpha u(\cdot,\alpha)=v(\cdot,\alpha)$ in $C^1([0,R])$. We next verify the continuity of this derivative with respect to $\alpha$. If $\alpha_m\to\alpha$, then \eqref{eq:log-l1-convergence} gives
\[
\log u^2(\cdot,\alpha_m)+2
\longrightarrow
\log u^2(\cdot,\alpha)+2
\quad\text{in }L^1([0,R]).
\]
Applying Lemma~\ref{lem:log-L1} once more, we obtain
$v(\cdot,\alpha_m)\rightarrow v(\cdot,\alpha)$
in $C^1([0,R])$.
Thus the map $\alpha\mapsto u(\cdot,\alpha)$ is continuously differentiable as a map into $C^1([0,R])$.

Finally, suppose that $z_j(\alpha_0)$ exists for some $\alpha_0>0$. By Lemma~\ref{lem:log-global},
$u_r(z_j(\alpha_0),\alpha_0)\neq0$.
Hence the implicit function theorem, together with the {continuous dependence of $u$ on $\alpha$ in $C^1$}, yields a neighborhood $J$ of $\alpha_0$ on which $z_j(\alpha)$ is well defined and belongs to $C^1(J)$. Differentiating $u(z_j(\alpha),\alpha)=0$ with respect to $\alpha$ gives \eqref{eq:zderivative}.
This completes the proof.
\end{proof}

\subsection{Whole-space phase transition}
We now state the whole-space shooting result used below. This is the only ingredient in the proof that relies on the recent phase-transition theory for the variation equation. We formulate it in the form needed for the finite-ball zero-curve analysis.

\begin{theorem}\label{thm:log-whole}
Let $n\geq2$ and $u(\cdot,\alpha)$ be the solution of \eqref{eq:intro-shooting-unified}.  There exists a strictly increasing sequence
\[
 e^{\frac{n}{2}}=\alpha_0<\alpha_1<\alpha_2<\cdots,
 \qquad \alpha_k\to\infty,
\]
which is precisely the sequence $\{\alpha_k^{\log}\}$ introduced in
\eqref{eq:alphak}, with the following properties.
\begin{enumerate}[label=\rm(\roman*)]
\item $u(\cdot,\alpha_0)$ is the Gausson given by \eqref{eq:intro-gausson}.
\item For every $k\geq1$, $u(\cdot,\alpha_k)$ is the unique whole-space bound state
with exactly $k$ simple zeros.
\item If $\alpha\in(\alpha_k,\alpha_{k+1})$, then $u(\cdot,\alpha)$ has exactly $k+1$ simple zeros and,
after its last zero, oscillates about either $1$ or $-1$.
\item If $\alpha<\alpha_0$ and $\alpha\ne1$, then $u(r,\alpha)>0$ for every $r>0$ and $u(\cdot,\alpha)$ oscillates about $1$.
\item If $u(\cdot,\alpha)$ has at least $j$ zeros, then the corresponding variation $v(\cdot,\alpha)=\partial_\alpha u(\cdot,\alpha)$ satisfies
\begin{equation}\label{eq:uvsign}
 u_r(z_j(\alpha),\alpha)v(z_j(\alpha),\alpha)>0.
\end{equation}
Consequently, $z_j'(\alpha)<0$ wherever the $j$-th zero curve is defined.
\end{enumerate}
\end{theorem}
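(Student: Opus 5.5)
The plan is to reduce Theorem~\ref{thm:log-whole} to the whole-space results of Liu--Sun--Zou \cite{LiuSunZou} and check that their hypotheses and conclusions fit the regular shooting framework of Lemmas~\ref{lem:log-L1}--\ref{lem:log-C1}.

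\emph{Items (i)--(ii).} Since $u_0(x)=\exp(\frac n2-\frac{|x|^2}{2})$ satisfies $u_0''+\frac{n-1}{r}u_0'+u_0\log u_0^2=0$ and $u_0(0)=e^{n/2}$, the uniqueness part of Lemma~\ref{lem:log-global} gives $u(\cdot,e^{n/2})=u_0$. This is (i). For $k\ge1$, the existence of a $k$-node bound state comes from the classical shooting construction. Its uniqueness among radial bound states with $k$ zeros is the Liu--Sun--Zou theorem for every $n\ge2$. We then define $\alpha_k$ as its initial height; Lemma~\ref{lem:log-global} guarantees that all its zeros are simple.

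\emph{Items (iii)--(iv).} These follow from the energy $E$ in \eqref{eq:energy}, which is nonincreasing by \eqref{eq:Eprime}. We classify the $\omega$-limit behaviour by the sign of $\lim_{r\to\infty}E(r)$. A solution with only finitely many zeros must eventually have fixed sign. It is then either a bound state ($E\to0$ and $u\to0$) or oscillates about $\pm1$, the critical points of $F$ other than $0$. Infinitely many zeros are impossible, because between two consecutive zeros a maximum point $c$ with $|u(c)|<e^{1/2}$ would give $E(c)<0$. Once $E$ is negative, $u$ cannot return to $0$, since $F(0)=0$.

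For $\alpha<\alpha_0$, $\alpha\ne1$, positivity follows from a Sturm comparison with the Gausson. Concretely, $u(\cdot,\alpha)$ stays below $u_0$ in the sense that the Wronskian-type quantity $r^{n-1}(u_0u'-uu_0')$ has a fixed sign. Combined with $E(0)=F(\alpha)<0$ when $\alpha<e^{1/2}$, and a separate argument on $(e^{1/2},e^{n/2})$, this keeps $u$ positive.

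The exact zero count $k+1$ on $(\alpha_k,\alpha_{k+1})$ comes from two facts. The zero curves $z_j$ are continuous and strictly decreasing by (v), and new zeros can only enter from $r=\infty$, exactly when $\alpha$ crosses a bound-state height. This rests on Lemma~\ref{lem:log-C1} and the stability of simple zeros.

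\emph{Item (v).} This is the main obstacle: the interlacing sign relation $u_r v>0$ at every zero. Liu--Sun--Zou prove it for $n\ge3$ via a Tang-type induction along zeros. Their argument uses auxiliary functions of the form $w=ru_r+\mu u$ and a Pohozaev-type bridge quantity $Q_n$ whose positivity needs $n\ge3$. For $n=2$, we follow the Zhang--Zhang observation: the bridge quantity reduces to $Q_2$, whose positivity is inherited from the inductive phase information already established at the previous zero. This is the content of Proposition~\ref{prop:log-planar}.

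A genuine check is required here. The induction integrates identities of the form $(r^{n-1}(v w'-wv'))'=\dots$ across zeros of $u$, where the coefficient $\log u^2+2$ is singular. We would justify these integrations using $r^{n-1}v'\in AC$ from Lemma~\ref{lem:log-L1} together with the integrability \eqref{eq:logL1}.

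Once \eqref{eq:uvsign} holds, formula \eqref{eq:zderivative} of Lemma~\ref{lem:log-C1} immediately gives $z_j'(\alpha)<0$.
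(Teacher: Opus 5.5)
Your handling of (v) follows the paper. You take the interlacing of $v$ from Liu--Sun--Zou (Proposition~3.5) for $n\ge3$ and from the planar replacement in Proposition~\ref{prop:log-planar}, where $Q_n=Q_2$, and then get $z_j'<0$ from \eqref{eq:zderivative}. Two things should be stated explicitly.

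First, the counting step: $v(0)=1$ and exactly one sign change of $v$ in each $(z_{j-1},z_j)$ give $\operatorname{sgn}v(z_j)=(-1)^j=\operatorname{sgn}u_r(z_j)$. Second, $r^{n-1}v'\in AC$ carries $Q$, $M$, $Q_j$ through a nodal zero, but it does not handle the bridge step. $T_2$ diverges logarithmically at $z_i$, so it may only be used on components of $\{u\ne0\}$. The quantity $B_{a_i}=Q-a_iM-2F_{a_i}(u)\,rv/u'$ extends continuously through $z_i$ because $F_{a_i}(u)=O(|r-z_i|^2(1+|\log|r-z_i||))$ while $rv/u'$ stays bounded. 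That, together with integrability of $B_{a_i}'$, is the check the paper makes.

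The genuine gap is in (iii)--(iv). The paper does not reprove these; it takes (i)--(iv) from Theorem~1.2 of Liu--Sun--Zou, valid for every $n\ge2$. The arguments you substitute fail as written.
\begin{enumerate}
\item That a maximum with $|u(c)|<e^{1/2}$ gives $E(c)<0$ only shows every extremum between consecutive zeros has $|u(c)|\ge e^{1/2}$. It does not exclude infinitely many zeros with $E\downarrow E_\infty\ge0$. Excluding this means playing $\int^\infty r^{-1}u'^2\,dr<\infty$ against the energy lost per excursion. The case $E_\infty=0$ is delicate, because the passage time near the singular point $u=0$ grows as the orbit approaches the zero-energy figure-eight.
\item For (iv), $W=r^{n-1}(u_0u'-uu_0')$ satisfies $W'=r^{n-1}uu_0\log(u_0^2/u^2)$, so it gives $(u/u_0)'>0$ only while $0<u<u_0$. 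Since $u_0\to0$ and $u$ is not a bound state, $u$ must cross $u_0$ at a finite radius. After that $W'<0$, and no sign information survives. The range $\alpha\in(e^{1/2},e^{n/2})$ is exactly where an argument is needed, and there you only write ``a separate argument''.
\item ``New zeros enter from $r=\infty$ exactly at bound-state heights'' is the conclusion, not a proof. The ordering $\alpha_0<\alpha_1<\cdots$ and $\alpha_k\to\infty$ are likewise unproved.
\end{enumerate}

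Your zero-curve idea can be completed. By (v), Lemma~\ref{lem:log-global} and $C^1$ dependence, every component of the open set $\{\alpha: z_j(\alpha)\text{ exists}\}$ has the form $(a,\infty)$ with $z_j(\alpha)\to\infty$ as $\alpha\downarrow a$. Indeed, a finite limit of $z_j$ at a finite endpoint would give the endpoint orbit $j$ simple zeros. If $E(r_*,a)<0$ for some $r_*$, the same holds for nearby $\alpha$ and blocks the escaping zero. Hence $u(\cdot,a)$ is a $(j-1)$-node bound state, and uniqueness gives $a=\alpha_{j-1}$.

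This yields (iii) and the positivity in (iv) together. It still needs:
\begin{itemize}
\item the finite-zero property;
\item nonemptiness of these sets for large $\alpha$ (Lemma~\ref{lem:log-bessel});
\item existence of a $k$-node bound state for every $k$, so that consecutive endpoints are distinct;
\item the dichotomy ``bound state or oscillation about $\pm1$''.
\end{itemize}
Short of supplying these, cite Theorem~1.2 as the paper does.
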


\begin{proof}
Assertions (i)--(iv) follow from Theorem 1.2 of Liu--Sun--Zou
\cite{LiuSunZou}, which is valid for every $n\geq2$ {and provides the
complete shooting classification needed here}.  For $n\geq3$,  \cite[Proposition 3.5]{LiuSunZou} shows that $v$ has exactly one zero in $(0,z_1(\alpha))$ and exactly one zero between each pair of consecutive zeros of $u$.
For $n=2$, the same interlacing property follows from Proposition~\ref{prop:log-planar}, where the only dimension-dependent positivity step in the phase-transition argument is replaced by its planar counterpart.

It remains to deduce the sign relation. Since $u(0,\alpha)>0$ and all zeros of $u$ are simple,
\[
 \operatorname{sgn}u_r(z_j(\alpha),\alpha)=(-1)^j.
\]
Moreover, $v(0,\alpha)=1$, and the above interlacing shows that $v$ changes sign exactly once before $z_1(\alpha)$ and once between every two consecutive zeros of $u$. Hence
\[
 \operatorname{sgn}v(z_j(\alpha),\alpha)=(-1)^j.
\]
Therefore, \eqref{eq:uvsign} holds, and \eqref{eq:zderivative} yields $z_j'(\alpha)<0$.
\end{proof}

\begin{proposition}\label{prop:log-planar}
Let $n=2$, and let $u(\cdot,\alpha)$ be a nodal solution of \eqref{eq:intro-shooting-unified}. Set $v(\cdot,\alpha)=\partial_\alpha u(\cdot,\alpha)$. Then $v(\cdot,\alpha)$ has exactly one zero in $(0,z_1(\alpha))$ and, for every $j\geq2$ such that $z_j(\alpha)$ exists, exactly one zero in $(z_{j-1}(\alpha),z_j(\alpha))$.
Consequently,
\begin{equation}\label{eq:planar-strict-zero-sign}
 u_r(z_j(\alpha),\alpha)v(z_j(\alpha),\alpha)>0
\end{equation}
for every zero $z_j(\alpha)$ of $u(\cdot,\alpha)$.
\end{proposition}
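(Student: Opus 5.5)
We plan to follow the Tang/Liu--Sun--Zou phase-transition scheme, with every step carried out in dimension two. In that scheme the variation $v$ is compared with a family of auxiliary solutions of the linearized equation. The first is the dilation variation $w:=ru_r$. Since $f(s)=s\log s^2$ satisfies $f'(s)s-f(s)=2s$, a direct computation gives
\[
 w''+\frac{n-1}{r}w'+(\log u^2+2)w=-2f(u)=-2u\log u^2 .
\]
The second combination, $\alpha\partial_\alpha u - u$, satisfies $L(u)=f'(u)u-f(u)=2u$. For the logarithm, the scaling $u\mapsto\lambda u$ gives $f(\lambda u)=\lambda f(u)+\lambda u\log\lambda^2$. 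Hence $\partial_\lambda$ of the shooting family at $\lambda=1$ (that is, $\alpha v$) differs from $u$ by a solution of $L\phi=-2u$. These identities are exact on intervals free of zeros of $u$, and they extend across simple zeros by Lemma~\ref{lem:log-L1}, since $\log u^2\in L^1_{\rm loc}$ by Lemma~\ref{lem:log-global}. All Wronskian-type identities of the form $(r^{n-1}(v\phi'-v'\phi))'=r^{n-1}v\,L\phi$ then hold in the absolutely continuous sense.

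The plan is to prove by induction on $j$ that $v$ has exactly one zero $t_j\in(z_{j-1},z_j)$, with $z_0=0$. The lower bound (at least one zero) comes from Sturm comparison. On $(z_{j-1},z_j)$ the function $u$ solves the same equation, and $u$ itself satisfies $Lu=f'(u)u-f(u)=2u$, which has the sign of $u$. Integrating $(r^{n-1}(vu'-v'u))'=-2r^{n-1}uv$ over a nodal interval where $v$ kept a fixed sign leads to a sign contradiction with the boundary terms $r^{n-1}v u'$ at $z_{j-1}$ and $z_j$. Here one uses the inductive sign $\operatorname{sgn}v(z_{j-1})=(-1)^{j-1}$, and for $j=1$ the regular data $v(0)=1$.

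For the upper bound (at most one zero), we will follow Tang's argument. Assume $v$ has two zeros in a nodal interval. Compare $v$ with the auxiliary combination $\phi_\mu:=ru_r+\mu u$. Choose $\mu$ so that $\phi_\mu$ vanishes at the first zero $t_j$ of $v$, and consider the Wronskian
\[
 W(r)=r^{n-1}\bigl(v\phi_\mu'-v'\phi_\mu\bigr),\qquad
 W'(r)=r^{n-1}v\bigl(-2u\log u^2+2\mu u\bigr).
\]
The sign of $W'$ is then controlled by where $\log u^2=\mu$, i.e. by the level set $|u|=e^{\mu/2}$. The bridge quantity $Q_n$ of Tang enters at exactly this point. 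It is a combination of energy-type terms, for instance
\[
 Q_n(r)=r^{n}\Bigl(\tfrac12u_r^2+F(u)\Bigr)+(n-2)\,r^{n-1}uu_r/2,
\]
or the analogous Pohozaev functional, evaluated at the extremal point of $u$ in the nodal interval. For $n\ge3$, Liu--Sun--Zou prove $Q_n>0$ using $n\ge3$. For $n=2$ the $(n-2)$ term vanishes, and $Q_2(r)=r^2E(r)$ up to a constant. We plan to show $Q_2>0$ at the relevant point from the phase induction itself. At the $(j-1)$-st step one knows $u$ oscillates past the zero with nonnegative energy. Because $E$ is nonincreasing and $E(z_j)=\tfrac12u_r(z_j)^2>0$, we get $E>0$ on $[0,z_j]$. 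This positivity replaces the dimension-dependent step, and the rest of Tang's contradiction argument (the Wronskian $W$ changes sign incompatibly with the two zeros of $v$) goes through verbatim.

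The main obstacle is identifying precisely which step of \cite[Proposition 3.5]{LiuSunZou} uses $n\ge3$ and checking that $E>0$ up to $z_j$ (a genuine consequence of simple zeros plus monotone energy) is the needed substitute. A further check is that the singular points $|u|=0$ do not break the Wronskian sign bookkeeping; this is handled by the $L^1$ Cauchy theory. Once interlacing holds, the sign conclusion \eqref{eq:planar-strict-zero-sign} follows exactly as in the proof of Theorem~\ref{thm:log-whole}: $\operatorname{sgn}u_r(z_j)=(-1)^j=\operatorname{sgn}v(z_j)$. Here $v(z_j)\neq0$ holds because a zero of $v$ at $z_j$ would force, via the Wronskian with $u$, an extra zero contradicting uniqueness in the count.
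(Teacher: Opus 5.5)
Your existence step is sound once a sign is corrected. Your own identity $(r^{n-1}(v\phi'-v'\phi))'=r^{n-1}vL\phi$ with $\phi=u$ gives $(r^{n-1}(u'v-uv'))'=+2r^{n-1}uv$; this is the paper's $M'=2ruv$ for $M=r(u'v-uv')$. The boundary-term contradiction closes only with the plus sign.

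\textbf{The upper bound is where the gap lies.} That bound is the real content of the proposition, and there you misidentify the dimension-dependent ingredient. In the Liu--Sun--Zou scheme the bridge term is not an energy or Pohozaev functional of $u$. It is the $v$-dependent quantity $Q_n=Q+nru'v$, with $Q=r^2(u'v'+f(u)v)$. It enters through $B_a'=-2F_a(u)Q_n/(ru'^2)$, where $B_a=Q-aM-2F_a(u)rv/u'$ and $F_a(u)=\tfrac12u^2(\log u^2-1-a)$.

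Your substitute, $E>0$ on $[0,z_j]$, holds trivially in every dimension ($E'\le0$ and $E(z_j)=\tfrac12u_r(z_j)^2>0$). It also carries no information about $v$. So it cannot be the step that required $n\ge3$.

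What the planar case actually needs is positivity of $Q_2=Q+2ru'v$ along each phase $[c_{i-1},c_i]$ between consecutive critical points of $u$. This comes out of the induction itself:
\begin{itemize}
\item $Q_2=Q_1+ru'v>Q_1>0$ wherever $u'v>0$.
\item A first-contact argument for $Q_1=Q+ru'v$ is driven by $B_0'=-2F(u)Q_2/(ru'^2)>0$ on $\{|u|<e^{1/2}\}$.
\item Positivity of $Q_1$ (and of $Q$ near $c_i$) after the first zero $\tau_i$ of $v$ excludes a second zero. At such a zero $\tilde\tau$ one has $Q_1(\tilde\tau)=Q(\tilde\tau)=\tilde\tau^2u'v'\le0$.
\item The bridge identity with $a_i=\log U_i^2-1$ is integrated between a critical point $\bar t_i$ of $T_2$ on the outgoing side and its matching point $t_i$ on the incoming side, where $|u(t_i)|=|u(\bar t_i)|=U_i$. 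This gives $T_2=Q-(\log u^2-1)M>0$ on the outgoing part of the phase.
\item That positivity propagates the induction data $Q,M,T_2>0$ from $c_{i-1}$ to $c_i$.
\end{itemize}

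\textbf{Your comparison device is not Tang's argument.} The Wronskian of $v$ with $\phi_\mu=ru_r+\mu u$ is the classical ground-state argument, so ``the rest goes through verbatim'' has nothing to refer to. That argument works for ground states because $u$ is monotone and $\mu-\log u^2$ changes sign once. On a nodal interval, or on a phase through $z_j$, $\log u^2\to-\infty$ at the zero and is not monotone. Then $\mu-\log u^2$ generally changes sign twice, and $W'=2r^{n-1}uv(\mu-\log u^2)$ has no definite sign between two putative zeros of $v$. You give no mechanism that closes the contradiction.

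\textbf{Your justification of $v(z_j)\ne0$ also fails.} Suppose $t_j\in(z_{j-1},z_j)$ is the only interior zero and $v(z_j)=0$. Then $r^{n-1}(u'v-uv')$ simply decreases on $(t_j,z_j)$ from a positive value to $0$. The Wronskian with $u$ still produces at least one zero in the next nodal interval, so the open-interval count is not violated. The strict sign at $z_j$ has to come from the upper-bound mechanism. In the paper, excluding zeros of $v$ on $(\tau_i,c_i]$ gives $M(z_i)=z_iu'(z_i)v(z_i)>0$. This is exactly the fact your lower-bound induction presupposes at $z_{j-1}$.

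\textbf{Integration through the zero needs a direct check.} $T_2$ diverges logarithmically at $z_i$. Integrating the bridge identity through the nodal zero therefore requires checking that $B_{a_i}$ extends continuously there. It also requires that $F_{a_i}(u)Q_2/(ru'^2)=O\bigl(|r-z_i|^2(1+|\log|r-z_i||)\bigr)$ be integrable. The linear $L^1$ Cauchy theory does not supply this by itself.
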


\begin{proof}
We provide only the planar replacement for the dimension-dependent step in the phase-transition argument of \cite[Section~3]{LiuSunZou}. All the remaining sign arguments, as well as the analysis of the first phase, require only $n\geq2$ and the differential identities stated below.

Write
\[
 f(u)=u\log u^2,
 \qquad
 F(u)=\frac12u^2(\log u^2-1),
 \qquad
 E=\frac12u'^2+F(u).
\]
For $n=2$, introduce
\begin{align*}
 Q&=r^2\bigl(u'v'+f(u)v\bigr),
 &Q_j&=Q+jru'v\quad(j=1,2),\\
 M&=r(u'v-uv'),
 &T_2&=Q-(\log u^2-1)M.
\end{align*}
On every interval containing no zero of $u$, direct differentiation gives
\begin{align*}
 Q'&=2rf(u)v,
 &Q_j'&=r\bigl(ju'v'+(2-j)f(u)v\bigr),\\
 M'&=2ruv,
 &T_2'&=-\frac{2u'}uM+2ruv.
\end{align*}
The identities for $Q$, $Q_j$, and $M$ extend across a simple zero in
the integral sense.  The quantity $T_2$ is used only on components of
$\{u\ne0\}$; indeed, $T_2$ itself has a logarithmic divergence at a
zero. The bridge quantity used below has a continuous extension through that
zero, and its derivative is integrable there; both assertions are verified
explicitly before the bridge identity is integrated.

For completeness, we identify all points at which the dimension enters the
phase induction.  The first-phase argument uses the energy identity
$E'=-(n-1)u'^2/r$, the signs of $f$, $F$, and $uf-2F$, and the regular
conditions $u'(0)=v'(0)=0$; each remains valid for $n=2$.  On a complete
phase, the production of the first zero of $v$, the exclusion of a second
zero before $\bar b_i$, and the propagation of the signs of $Q$, $Q_1$, and
$M$ use only the four differential identities displayed above and integration
over intervals whose endpoints are simple zeros or critical points.  No
factor $n-2$ is divided out in those steps.  The sole remaining
dimension-dependent quantity is the bridge term $Q_n$ in
\eqref{eq:planar-Baprime}.  In dimension two it is exactly $Q_2$, whose
positivity is obtained before the bridge identity is used.  Thus the
induction below verifies the complete planar argument rather than assuming a
higher-dimensional positivity statement.

Set $c_0=0$. In a complete nodal phase $[c_{i-1},c_i]$, the endpoints
are consecutive critical points of $u$, and $z_i$ is the unique zero of
$u$ in the phase.  Introduce the uniquely determined points
\[
 c_{i-1}<b_i<r_i<z_i<\bar r_i<\bar b_i<c_i,
 \qquad
 |u(b_i)|=|u(\bar b_i)|=e^{\frac{1}{2}},
 \qquad
 |u(r_i)|=|u(\bar r_i)|=1.
\]

\medskip
\noindent\emph{Base phase.}
The first-phase calculation starts from $v(0)=1$ and
$u'(0)=v'(0)=Q(0)=M(0)=Q_1(0)=Q_2(0)=0$; thus it cannot be obtained by
substituting $c_0=0$ into a strict induction hypothesis. We record the full
base conclusion. There is a first zero $\tau_1\in(0,r_1)$ of $v$, and
\begin{equation}\label{eq:planar-base-v}
 v>0\quad\hbox{on }(0,\tau_1),
 \qquad v<0\quad\hbox{on }(\tau_1,c_1].
\end{equation}
Moreover,
\begin{align}
 Q&>0\quad \hbox{on }(0,z_1]\cup[\bar b_1,c_1],\label{eq:planar-base-Q}\\
 M,Q_1,Q_2&>0\quad\hbox{on }(0,c_1],\label{eq:planar-base-M}\\
 T_2&>0\quad\hbox{on }(0,b_1].\label{eq:planar-base-T}
\end{align}
We give the missing dimension check rather than importing a strict induction
hypothesis at the origin. The dimension-free first-phase calculation
\cite[Proposition~3.2]{LiuSunZou} gives a unique zero
$\tau_1\in(0,r_1)$ before $z_1$, together with
$Q,M>0$ on $(0,z_1]$. On $(0,\tau_1)$ one has
$u>1$, $f(u)>0$, $f'(u)>0$, $v>0$, and $u',v'<0$. Hence
\[
 Q_1'=r(u'v'+f(u)v)>0,
 \qquad Q_2'=2ru'v'>0,
\]
so $Q_1,Q_2>0$ there.

For $a\in\mathbb R$, set
\begin{equation*}
 F_a(u):=F(u)-\frac a2\bigl(uf(u)-2F(u)\bigr)
       =\frac12u^2(\log u^2-1-a)
\end{equation*}
and
\begin{equation*}
 B_a:=Q-aM-2F_a(u)\frac{rv}{u'}.
\end{equation*}
The bridge identity of \cite{LiuSunZou} becomes
\begin{equation}\label{eq:planar-Baprime}
 B_a'=-2F_a(u)\frac{Q_n}{r u'^2}
     =-2F_a(u)\frac{Q_2}{r u'^2},
\end{equation}
because
\begin{equation*}
 Q_n=Q_2,\qquad n=2.
\end{equation*}

We next continue these signs beyond $z_1$. If $Q_1$ had a first zero
$\tilde z\in(z_1,\bar b_1]$ while $v<0$, then
$u'v>0$ and $Q_2=Q_1+ru'v>Q_1>0$ before $\tilde z$. Since
$F(u)<0$ on $(z_1,\bar b_1)$, the identity
\[
 B_0'=-2F(u)\frac{Q_2}{ru'^2}>0
\]
would give $B_0(\tilde z)>B_0(z_1)=Q(z_1)>0$. On the other hand,
$Q_1(\tilde z)=0$ and the energy identity give
\[
 B_0(\tilde z)
 =-\frac{2\tilde zv(\tilde z)}{u'(\tilde z)}
 E(\tilde z)<0,
\]
a contradiction. Thus $Q_1>0$ on $[z_1,\bar b_1]$, which also excludes
a second zero of $v$ there and gives
$Q(\bar b_1)>Q(z_1)>0$. If $v$ had a first zero in
$(\bar b_1,c_1]$, then $u< -e^{1/2}$ and $v<0$ before that zero, so
$Q'=2rf(u)v>0$. At the first zero, however,
$Q=r^2u'v'<0$, again a contradiction. This proves
\eqref{eq:planar-base-v} and \eqref{eq:planar-base-Q}.

On $[z_1,c_1]$, $uv>0$, and therefore $M'=2ruv>0$; together with
$M(z_1)>0$ this gives $M>0$. On
$[\tau_1,z_1]\cup[\bar b_1,c_1)$ one has $u'v>0$ and hence
$Q_2>Q_1>Q>0$, while on $[z_1,\bar b_1]$ the preceding first-contact
argument gives $Q_2>Q_1>0$. At $c_1$, $u'(c_1)=0$, so
$Q_1(c_1)=Q_2(c_1)=Q(c_1)>0$. This proves
\eqref{eq:planar-base-M}. Finally, define on $(0,z_1)$
\[
 T_1:=Q-(\log u^2)M.
\]
The first-phase identity gives $T_1,T_1'>0$ there, while
$T_2=T_1+M>T_1$; this gives \eqref{eq:planar-base-T}. Every comparison
above uses $Q_2$ itself and no positive multiple of $n-2$. The bridge
argument below,
applied with $i=1$, upgrades \eqref{eq:planar-base-T} to
$T_2>0$ on $[\bar b_1,c_1]$. Hence
\begin{equation}\label{eq:planar-base-end}
 Q(c_1)>0,
 \qquad M(c_1)>0,
 \qquad T_2(c_1)>0,
\end{equation}
which is the genuine base datum for the phase induction.

\medskip
\noindent\emph{Inductive phases.}
Let $i\ge2$ and assume
\begin{equation}\label{ih}
Q(c_{i-1})>0,\qquad M(c_{i-1})>0,\qquad T_2(c_{i-1})>0.
\end{equation}
{The dimension-independent production step of the phase argument,
applied after $c_{i-1}$, yields a first zero $\tau_i$ of $v$ in
$(c_{i-1},r_i)$.}  We now apply Steps~1--3 in the
proof of \cite[Proposition~3.4]{LiuSunZou}.  Inspection of those steps shows
that they use only the displayed differential identities, $n>1$, and the
induction data \eqref{ih}; the quantity $Q_n$ is not used there.  They give
\begin{align*}
T_2&>0
\quad\hbox{on }[c_{i-1},\max\{\tau_i,b_i\}],\\
Q,Q_1,Q_2&>0
\quad\hbox{on }[c_{i-1},\max\{\tau_i,b_i\}],
\end{align*}
and extend \(Q_1>0\) from \(\tau_i\) up to \(\bar b_i\).

Integrating $Q'=2rf(u)v$ on the incoming and outgoing pieces and using
the matching values of $|u|$ gives $Q(\bar b_i)>Q(b_i)>0$.
The sign of $Q_1$ then excludes any further zero of $v$ on
$(\tau_i,c_i]$.
Consequently,
\[
u'v>0\quad\hbox{on }(\tau_i,c_i],
\qquad
Q>0\quad\hbox{on }[\bar b_i,c_i],
\]
and hence \(Q_1>0\) throughout \([c_{i-1},c_i]\). Moreover,
\[
Q_2=Q_1+ru'v>Q_1>0
\quad\hbox{on }(\tau_i,c_i].
\]
The identity $M'(r)=2ruv$ also
shows that \(M\) increases from \(M(c_{i-1})>0\) to \(M(\tau_i)\),
then decreases to $M(z_i)=z_iu'v>0$
and finally increases up to \(c_i\). Hence
\[
M>0\quad\hbox{on }[c_{i-1},c_i].
\]
Thus, under the induction hypothesis \eqref{ih}, the preceding arguments have
already yielded
\begin{equation}\label{eq:planar-induction-positive}
Q_1,Q_2,M>0
\quad\hbox{on }[c_{i-1},c_i],
\qquad
T_2>0
\quad\hbox{on }[c_{i-1},b_i],
\end{equation}
as well as
\begin{equation*}
Q>0
\quad\hbox{on }[c_{i-1},b_i]\cup[\bar b_i,c_i],
\qquad
Q(\bar b_i)>Q(b_i)>0.
\end{equation*}
For $i=1$, the same statements follow from
\eqref{eq:planar-base-Q}--\eqref{eq:planar-base-T} and the first-phase
comparison $Q(\bar b_1)>Q(b_1)$. Thus the remaining bridge argument applies
both to the base phase and to every inductive phase.
It remains only to provide the planar proof that
\begin{equation}\label{key}
T_2>0\quad\hbox{on }[\bar b_i,c_i].
\end{equation}

We now establish the missing positivity on the outgoing part of the phase.
Since $F(u(\bar b_i))=0$, we have $T_2(\bar b_i)=Q(\bar b_i)>0$. Moreover, $T_2'(c_i)=2c_i uv>0$. Let $\bar t_i\in(\bar b_i,c_i)$ be a critical point of $T_2$. The algebraic identity
\cite[Lemma~3.4]{LiuSunZou}
\begin{equation*}
 T_2=B_0+\frac{F(u)}{uu'}T_2'
\end{equation*}
together with \eqref{eq:planar-induction-positive} yields
\begin{equation*}
 T_2''(\bar t_i)
 =\frac{2u(\bar t_i)}{\bar t_i u'(\bar t_i)}Q_2(\bar t_i)>0,
\end{equation*}
because $uu'>0$ on the outgoing part of the phase. Thus every critical point of $T_2$ in $(\bar b_i,c_i)$ is a strict local minimum. Consequently, there can be at most one such critical point, since two distinct local minima would force the existence of a local maximum between them.
More explicitly, $T_2''>0$ makes $T_2'$ change from negative to positive
at each critical point.  Two such changes would force an intermediate zero
of $T_2'$ at which the change is from positive to nonpositive, contradicting
the same inequality $T_2''>0$.
If $T_2$ has no critical point in $(\bar b_i,c_i)$, then $T_2'>0$ throughout $(\bar b_i,c_i)$, and hence
$T_2(r)>T_2(\bar b_i)>0$ for $r\in(\bar b_i,c_i]$.

Suppose now that the unique critical point, necessarily a strict minimum,
occurs at $\bar t_i$. Since the energy decreases strictly on a nonconstant
phase and $F$ is strictly increasing as a function of $|u|$ above
$e^{1/2}$, one has
\[
 |u(c_{i-1})|>|u(c_i)|>|u(\bar t_i)|>e^{\frac{1}{2}}.
\]
The strict monotonicity of $u$ on the incoming part of the phase therefore
gives a unique matching point $t_i\in(c_{i-1},b_i)$ such that
\[
 u(t_i)=-u(\bar t_i),
 \qquad
 U_i:=|u(t_i)|=|u(\bar t_i)|>e^{\frac{1}{2}},
\]
and set $a_i:=\log U_i^2-1$. Then
\begin{equation}\label{eq:planar-Fai-sign}
 -2F_{a_i}(s)=s^2\log\frac{U_i^2}{s^2}\geq0,
 \qquad |s|\leq U_i,
\end{equation}
where the expression is understood by continuity to be zero at $s=0$. Moreover,
\[
 B_{a_i}(t_i)=T_2(t_i),
 \qquad
 B_{a_i}(\bar t_i)=T_2(\bar t_i).
\]

We now justify integration through the simple zero $z_i$. Put
$d_i:=u'(z_i)\ne0$. The nonlinear equation, the integral linearized
equation, and $\log|r-z_i|\in L^1_{\rm loc}$ give
\begin{align*}
 u(r)&=d_i(r-z_i)+O(|r-z_i|^2),
 &u'(r)&=d_i+O(|r-z_i|),\\
 v(r)&=v(z_i)+O(|r-z_i|),
 &Q_2(r)&=Q_2(z_i)+O(|r-z_i|\,|\log|r-z_i||).
\end{align*}
Consequently,
\[
 F_{a_i}(u(r))
 =O\bigl(|r-z_i|^2(1+|\log|r-z_i||)\bigr),
\]
and $rv/u'$ remains bounded. Hence the last term in $B_{a_i}$ tends to
zero at $z_i$, while $Q-a_iM$ is continuous there. Thus $B_{a_i}$ has the
same finite one-sided limit and extends continuously across $z_i$. Moreover,
\[
 \left|-2F_{a_i}(u(r))\frac{Q_2(r)}{r u'(r)^2}\right|
 \le C|r-z_i|^2(1+|\log|r-z_i||),
\]
which is integrable. Integrate \eqref{eq:planar-Baprime} first on
$[t_i,z_i-\varepsilon]$ and $[z_i+\varepsilon,\bar t_i]$ and then let
$\varepsilon\downarrow0$. Using
\eqref{eq:planar-induction-positive} for $i\ge2$, or
\eqref{eq:planar-base-M} for $i=1$, together with
\eqref{eq:planar-Fai-sign}, gives
\begin{align*}
 T_2(\bar t_i)-T_2(t_i)
 &=\int_{t_i}^{\bar t_i}
   \bigl[-2F_{a_i}(u(r))\bigr]
   \frac{Q_2(r)}{r u'^2(r)}\,dr>0.
\end{align*}
The inequality is strict because $Q_2>0$ and
$-F_{a_i}(u)>0$ away from the matching endpoints and the nodal zero. Since
$t_i<b_i$, either \eqref{eq:planar-base-T} or the incoming induction
hypothesis gives $T_2(t_i)>0$. Therefore
\[
 T_2(\bar t_i)>T_2(t_i)>0.
\]
Thus the unique minimum of $T_2$ on the outgoing part is positive, and consequently
\begin{equation*}
 T_2(r)>0\quad\hbox{for every }r\in[\bar b_i,c_i].
\end{equation*}
This proves \eqref{key}.

For $i=1$, the positivity just proved gives the base datum
\eqref{eq:planar-base-end}. For every $i\ge2$, it propagates the induction
hypotheses from $c_{i-1}$ to $c_i$. Repetition over the complete phases gives exactly
one zero of $v$ in each such phase. {On the final incomplete phase, the
same production step gives the first zero of $v$, while $Q_1>0$ excludes a
second zero by the same first-contact argument used above.} Consequently, $v$ has exactly one zero in
each interval $(z_{j-1},z_j)$, $j\geq1$, where $z_0:=0$. It follows that
 $\operatorname{sgn}u_r(z_j)=(-1)^j$ and $\operatorname{sgn}v(z_j)=(-1)^j$.
Therefore, $u_r(z_j)v(z_j)>0$,
which proves \eqref{eq:planar-strict-zero-sign}.
\end{proof}

\subsection{Endpoint behavior of zero curves}
We next determine the exact domain of each zero curve and its limiting behavior as the shooting height approaches either endpoint of that domain. The argument below does not use the whole-space interlacing result and applies in every dimension $n\geq2$, provided that the corresponding zero $z_j(\alpha)$ exists.

\begin{lemma}\label{lem:log-left}
Fix $j\geq1$. The zero curve $z_j(\alpha)$ is defined precisely for $\alpha\in(\alpha_{j-1},\infty)$. Moreover, $z_j(\alpha)$ is $C^1$, strictly decreasing, and satisfies
\begin{equation}\label{eq:z-infty}
 \lim_{\alpha\downarrow\alpha_{j-1}}z_j(\alpha)=\infty.
\end{equation}
\end{lemma}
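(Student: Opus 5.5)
The plan has three parts: identify the domain of $z_j$ from the classification in Theorem~\ref{thm:log-whole}, take regularity and monotonicity from Lemma~\ref{lem:log-C1} and \eqref{eq:uvsign}, and prove the blow-up \eqref{eq:z-infty} by a limiting argument.

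\emph{Domain.} By Theorem~\ref{thm:log-whole}(iii), for $\alpha\in(\alpha_k,\alpha_{k+1})$ the profile $u(\cdot,\alpha)$ has exactly $k+1$ simple zeros. At $\alpha=\alpha_k$ it has exactly $k$ zeros by (ii), and by (i) when $k=0$. For $\alpha<\alpha_0$ it has no zeros by (iv); the case $\alpha=1$ is the constant solution $u\equiv 1$. Hence the set of $\alpha$ for which $u(\cdot,\alpha)$ has at least $j$ zeros is
\[
 \{\alpha:\ \alpha>\alpha_{j-1}\}=\bigcup_{k\ge j-1}(\alpha_k,\alpha_{k+1})\ \cup\ \{\alpha_k:k\ge j\}.
\]
The last set is where $u(\cdot,\alpha_k)$ has $k\ge j$ zeros. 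This uses $\alpha_k\to\infty$.

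\emph{Regularity and monotonicity.} Lemma~\ref{lem:log-C1} gives that $z_j$ is $C^1$ near every point of its domain. Theorem~\ref{thm:log-whole}(v) then gives $z_j'<0$ by \eqref{eq:zderivative}.

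\emph{Blow-up at the left endpoint.} Since $z_j$ is decreasing, the limit $L=\lim_{\alpha\downarrow\alpha_{j-1}}z_j(\alpha)$ exists in $(0,\infty]$. Suppose $L<\infty$. For $\alpha_m\downarrow\alpha_{j-1}$, Lemma~\ref{lem:log-C1} gives $u(\cdot,\alpha_m)\to u(\cdot,\alpha_{j-1})$ in $C^1([0,L+1])$. We have $u(z_j(\alpha_m),\alpha_m)=0$ and $z_j(\alpha_m)\to L$, so $u(L,\alpha_{j-1})=0$. This zero is simple by Lemma~\ref{lem:log-global}.

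We then count zeros. The limit profile $u(\cdot,\alpha_{j-1})$ has exactly $j-1$ zeros in $(0,\infty)$, all simple. Choose $\widehat R>L$ with $u(\widehat R,\alpha_{j-1})\ne0$; this is possible because zeros are isolated. The stability-of-simple-zeros statement in the proof of Lemma~\ref{lem:log-C1} says that for large $m$, $u(\cdot,\alpha_m)$ has exactly $j-1$ zeros in $[0,\widehat R]$. This contradicts the fact that $z_1(\alpha_m)<\dots<z_j(\alpha_m)<\widehat R$ are $j$ zeros. Indeed $z_j(\alpha_m)\le L<\widehat R$, since $z_j(\alpha_m)$ increases toward $L$ as $\alpha_m$ decreases. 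Hence $L=\infty$. The case $j=1$ is covered as well, because $u(\cdot,\alpha_0)$ is the Gausson and has no zeros.

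\emph{Main obstacle.} The delicate point is the domain identification. One must check that nothing in (ii)--(iv) allows extra zeros at the heights $\alpha_k$, and that the cases $\alpha=1$ and $\alpha\le\alpha_0$ produce no zeros. This is a direct reading of Theorem~\ref{thm:log-whole}. The limiting argument itself needs only $C^1$ continuous dependence and the exclusion of double zeros, both already proved.
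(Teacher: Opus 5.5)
Your proposal is correct and follows essentially the same route as the paper. The domain comes from Theorem~\ref{thm:log-whole}, regularity and monotonicity from Lemma~\ref{lem:log-C1} and \eqref{eq:uvsign}, and the blow-up \eqref{eq:z-infty} from a contradiction via $C^1$ convergence to the $(j-1)$-node state and stability of simple zeros on a fixed interval. The only differences are cosmetic: you extract the limit $L$ by monotonicity (your observation $u(L,\alpha_{j-1})=0$ is not needed), and you quote the stability statement already proved in Lemma~\ref{lem:log-C1} rather than building the neighborhoods $I_\ell$ explicitly.
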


\begin{proof}
By Theorem~\ref{thm:log-whole}, the solution $u(\cdot,\alpha)$ has at least $j$ zeros if and only if $\alpha>\alpha_{j-1}$. At $\alpha=\alpha_{j-1}$, it is the whole-space $(j-1)$-node bound state;
 when $j=1$, we adopt the convention that $\alpha_0$ corresponds to the Gausson. Thus the $j$-th zero does not exist at $\alpha=\alpha_{j-1}$, and $z_j$ is defined precisely on $(\alpha_{j-1},\infty)$.
 Lemma~\ref{lem:log-C1} and Theorem~\ref{thm:log-whole}(v) then imply that $z_j$ is $C^1$ and strictly decreasing on this interval.

It remains to prove \eqref{eq:z-infty}. Suppose, to the contrary, that the conclusion fails. Then there exist $L_0>0$ and a sequence $\alpha_m\downarrow\alpha_{j-1}$ such that $z_j(\alpha_m)\leq L_0$.
When $j\geq2$, let $0<\zeta_1<\cdots<\zeta_{j-1}$ denote the zeros of the whole-space $(j-1)$-node bound state $u(\cdot,\alpha_{j-1})$. When $j=1$, the function $u(\cdot,\alpha_{0})$ is the Gausson and has no zeros.
 Choose
 \[
L>
\begin{cases}
L_0,\qquad&j=1,\\
\max\{L_0,\zeta_{j-1}+1\},\qquad&j\geq2.
\end{cases}
\]
Since each $\zeta_\ell$ is a simple zero of $u(\cdot,\alpha_{j-1})$,
we may choose pairwise disjoint intervals
$I_\ell=(\zeta_\ell-\delta_\ell,\zeta_\ell+\delta_\ell)\subset(0,L)$ so small that
{$u_r(\cdot,\alpha_{j-1})$ has a fixed nonzero sign
throughout each $I_\ell$.} Hence $u(\cdot,\alpha_{j-1})$ is strictly
monotone on $I_\ell$, and its values at the two endpoints of
$I_\ell$ have opposite signs. Outside these intervals, $u(\cdot,\alpha_{j-1})$ has no zeros and is therefore uniformly bounded away from zero; for $j=1$, this simply follows from the strict positivity of the Gausson on $[0,L]$. By the {continuous dependence in $C^1([0,L])$} established in Lemma~\ref{lem:log-C1}, $u(\cdot,\alpha_m)\rightarrow u(\cdot,\alpha_{j-1})$ in $C^1([0,L])$.
Consequently, for all sufficiently large $m$, the function $u(\cdot,\alpha_m)$ has no zeros on {$[0,L]\setminus \bigcup_{\ell=1}^{j-1}I_{\ell}$},
while its derivative has a fixed nonzero sign on each $I_\ell$ and its values at the two endpoints of $I_\ell$ have opposite signs. It therefore has exactly one zero in each $I_\ell$. Thus $u(\cdot,\alpha_m)$  has exactly $j-1$ zeros in $[0,L]$ when $j\geq2$, and no zeros there when $j=1$.
This contradicts $z_j(\alpha_m)\leq L_0<L$, which asserts that $u(\cdot,\alpha_m)$ has at least $j$ zeros in $[0,L]$. Hence \eqref{eq:z-infty} holds.
\end{proof}

We now turn to the behavior of the zero curves at the right endpoint of their domains.
Set
\[
 A:=\log\alpha^2,
 \qquad
 w_A(s):=\frac{u(s/\sqrt A,\alpha)}{\alpha}.
\]
We first obtain a uniform bound from the monotonicity of the energy. Indeed, if $\alpha>1$, then $F(u(r,\alpha))\leq E(r)<E(0)=F(\alpha)$ for $r>0$.
Since $F$ is even and strictly increasing on $(1,\infty)$, we have
\begin{equation*}
 |u(r,\alpha)|<\alpha
 \quad\text{for all }r\geq0.
\end{equation*}
Consequently, $|w_A|\leq1$ for $s\geq0$. A direct rescaling of the shooting equation gives
\begin{equation}
 \begin{cases}
 {w_A''+\dfrac{n-1}{s}w_A'+w_A+\dfrac{1}{A} w_A\log w_A^2=0,\qquad  s>0,}\\[4pt]
 w_A(0)=1,\qquad w_A'(0)=0,
 \end{cases}
 \label{eq:scaled}
\end{equation}
where $w_A\log w_A^2$ is understood to be zero at the zeros of $w_A$.

\begin{lemma}\label{lem:log-bessel}
As $\alpha\to\infty$, equivalently $A\to\infty$, one has
$w_A\to\Phi_n$ in $C^1([0,S])$ for every $S>0$. Let $\Psi_n$ be the
unique regular integral solution of
\begin{equation}\label{eq:log-second-profile}
 \Psi''+\frac{n-1}{s}\Psi'+\Psi
 =-\Phi_n\log\Phi_n^2,
 \qquad \Psi(0)=\Psi'(0)=0.
\end{equation}
Then
\begin{equation}\label{eq:log-second-profile-convergence}
 A(w_A-\Phi_n)\longrightarrow\Psi_n
 \quad\hbox{in }C^1([0,S])
\end{equation}
for every $S>0$. Consequently, if $\rho_j$ is the $j$-th positive zero
of $\Phi_n$, then
\begin{equation}\label{eq:zeroasymptotic}
 \sqrt A\,z_j(\alpha)
 =\rho_j+\frac{c_{j,n}}{A}+o(A^{-1}),
 \qquad
 c_{j,n}:=-\frac{\displaystyle\int_0^{\rho_j}
 s^{n-1}\Phi_n^2(s)\log\Phi_n^2(s)\,ds}
 {\rho_j^{n-1}\Phi_n'^2(\rho_j)}>0.
\end{equation}
In particular,
\begin{equation}\label{eq:z-zero}
 \lim_{\alpha\to\infty}z_j(\alpha)=0.
\end{equation}
\end{lemma}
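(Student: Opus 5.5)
We argue in three stages: convergence of $w_A$ to $\Phi_n$, convergence of the first-order corrector to $\Psi_n$, and then a perturbed zero via the implicit function theorem together with a Lagrange-identity evaluation of the correction.

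\emph{Stage 1: convergence to $\Phi_n$.} Fix $S>0$ and write the scaled problem \eqref{eq:scaled} in the Volterra form
\[
 w_A(s)=1-\int_0^s t^{1-n}\int_0^t \tau^{n-1}\Bigl(w_A+\tfrac1A w_A\log w_A^2\Bigr)\,d\tau\,dt .
\]
The bound $|w_A|\le1$ gives $|w_A\log w_A^2|\le 2/e$. Hence the perturbation term is $O(1/A)$ uniformly on $[0,S]$. The difference $w_A-\Phi_n$ then solves the linear Bessel Volterra equation with an $O(1/A)$ forcing term. By a Gronwall estimate for the kernel bounded as in \eqref{eq:origin-kernel-estimate}, $\|w_A-\Phi_n\|_{C^1([0,S])}=O(1/A)$; the derivative estimate comes from the formula for $w_A'$ as in Lemma~\ref{lem:log-L1}.

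\emph{Stage 2: convergence of the corrector.} Set $\eta_A:=A(w_A-\Phi_n)$. It solves
\[
 \eta_A''+\tfrac{n-1}{s}\eta_A'+\eta_A=-w_A\log w_A^2,\qquad \eta_A(0)=\eta_A'(0)=0 .
\]
By Stage 1, $w_A\to\Phi_n$ uniformly. Since $g(x)=x\log x^2$ is continuous on $[-1,1]$, the right-hand side converges uniformly to $-\Phi_n\log\Phi_n^2$. Stability of the linear regular Volterra problem (Lemma~\ref{lem:log-L1} with forcing, or Duhamel directly) then gives $\eta_A\to\Psi_n$ in $C^1([0,S])$. Existence and uniqueness of $\Psi_n$ themselves follow from the same Volterra argument, since the forcing is continuous.

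\emph{Stage 3: zero asymptotics.} Take $S>\rho_j$. The zeros of $\Phi_n$ are simple and $w_A\to\Phi_n$ in $C^1$, so $w_A$ has exactly $j$ zeros in $[0,\rho_j+\delta]$ for large $A$. The $j$-th of these, $s_j(A)=\sqrt A\,z_j(\alpha)$, converges to $\rho_j$; this gives \eqref{eq:z-zero}. For the expansion, expand $0=w_A(s_j)=\Phi_n(s_j)+A^{-1}\eta_A(s_j)$. Using $\Phi_n(s_j)=\Phi_n'(\rho_j)(s_j-\rho_j)+o(|s_j-\rho_j|)$ and $\eta_A(s_j)\to\Psi_n(\rho_j)$, we obtain
\[
 s_j-\rho_j=-\frac{\Psi_n(\rho_j)}{A\,\Phi_n'(\rho_j)}+o(A^{-1}).
\]
To evaluate $\Psi_n(\rho_j)$, use the Lagrange identity
\[
 \bigl(s^{n-1}(\Psi_n'\Phi_n-\Psi_n\Phi_n')\bigr)'=-s^{n-1}\Phi_n^2\log\Phi_n^2 .
\]
Integrating over $[0,\rho_j]$ and using $\Phi_n(\rho_j)=0$ together with the regular data at $0$ gives
\[
 \rho_j^{n-1}\Psi_n(\rho_j)\Phi_n'(\rho_j)=\int_0^{\rho_j}s^{n-1}\Phi_n^2\log\Phi_n^2\,ds .
\]
Substituting yields exactly $c_{j,n}$ in \eqref{eq:zeroasymptotic}.

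For positivity of $c_{j,n}$, note that $|\Phi_n|\le1$ on $[0,\infty)$: the energy $\tfrac12\Phi'^2+\tfrac12\Phi^2$ is nonincreasing and equals $\tfrac12$ at $0$. Hence $\Phi_n^2\log\Phi_n^2\le0$, with strict inequality on a set of positive measure, so $c_{j,n}>0$.

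The main obstacle is the uniform control of the nonlinearity at the zeros of $w_A$. Stage 2 needs only continuity of $x\log x^2$ together with $|w_A|\le1$, which avoids the non-Lipschitz issue, so we expect it to go through; the delicate point is keeping the $o(A^{-1})$ remainder uniform, which the $C^1$ convergence of $\eta_A$ near $\rho_j$ provides.
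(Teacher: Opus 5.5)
Your proposal is correct and follows the paper's proof essentially step for step. It uses the same Volterra convergence $w_A\to\Phi_n$ and the same corrector equation for $A(w_A-\Phi_n)$ with uniformly convergent forcing $-w_A\log w_A^2$. It also uses the same expansion $0=\Phi_n(s_j)+A^{-1}\eta_A(s_j)$ at a simple zero, and the same Wronskian identity and energy bound to get the sign of $c_{j,n}$.

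The only differences are harmless. You obtain Stage~1 by a direct Gronwall estimate, which even gives the rate $O(1/A)$, whereas the paper uses Arzel\`a--Ascoli and uniqueness of $\Phi_n$. You get strict negativity of the integral from $|\Phi_n|\le1$ and continuity, whereas the paper uses strict energy decay.
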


\begin{proof}
Equation \eqref{eq:scaled} is equivalent to the integral equation
\begin{equation}\label{eq:integralw}
 w_A(s)=1-\int_0^s t^{1-n}\int_0^t \tau^{n-1}\left(w_A(\tau)+\frac1A w_A(\tau)\log w_A^2(\tau)\right)d\tau\,dt.
\end{equation}
Set
\[
 G_A(s):=w_A(s)+\frac1A w_A(s)\log w_A^2(s).
\]
The function $\xi\mapsto \xi\log\xi^2$ extends continuously to $\xi=0$ by setting its value there equal to zero. Since $|w_A|\leq 1$ and $A\rightarrow\infty$, it follows that
\begin{equation*}
|G_A|\leq |w_A|+\frac1A \big|w_A\log w_A^2\big|\leq 1+\frac{C}{A}\leq2.
\end{equation*}
 Differentiating \eqref{eq:integralw}, we obtain
\begin{equation*}
 w_A'(s)=-s^{1-n}\int_0^s \tau^{n-1}G_A(\tau)\,d\tau.
\end{equation*}
Consequently,
 \begin{equation*}
 |w_A'(s)|\leq 2s^{1-n}\int_0^s \tau^{n-1}\,d\tau \leq \frac{2}{n}s.
\end{equation*}
 In particular, the families $\{w_A\}$ and $\{w_A'\}$ are uniformly bounded on $[0,S]$. Near the origin, it follows from
$|w_A'(s)|\leq Cs$ that $\{w_A\}$ and $\{w_A'\}$ are  equicontinuous.
On every interval
$[\delta,S]$, $\delta>0$, the same formula gives a uniform modulus of continuity for $\{w_A'\}$. Together with the uniform bound on $\{w_A'\}$, this also gives equicontinuity of $\{w_A\}$.
Thus, after passing to a subsequence, we obtain a limit $w$ in $C^1([0,S])$.  Since
\begin{equation*}
\sup_{s\in[0,S]}\left|
\frac1A w_A(s)\log w_A^2(s)\right|\leq \frac{C}{A}\longrightarrow0\quad \text{as}~A\rightarrow\infty,
\end{equation*}
we may pass to the limit in \eqref{eq:integralw} and obtain
\[
 w(s)=1-\int_0^s t^{1-n}\int_0^t \tau^{n-1}w(\tau)\,d\tau\,dt,\qquad w(0)=1,\qquad
w'(0)=0.
\]
This is precisely the integral formulation of \eqref{eq:main-bessel}. The regular solution of \eqref{eq:main-bessel} is unique, hence $w=\Phi_n$. Since every convergent subsequence has the same limit, the whole family satisfies $w_A\to\Phi_n$ in $C^1([0,S])$ for every fixed $S>0$.

Set $Y_A=A(w_A-\Phi_n)$. Subtracting the equation for $\Phi_n$ from
\eqref{eq:scaled} gives
\begin{equation}\label{eq:YA-equation}
 Y_A''+\frac{n-1}{s}Y_A'+Y_A=-w_A\log w_A^2,
 \qquad Y_A(0)=Y_A'(0)=0.
\end{equation}
The function $t\mapsto t\log t^2$, extended by zero at $t=0$, is uniformly
continuous on $[-1,1]$. Hence
\[
 w_A\log w_A^2\longrightarrow\Phi_n\log\Phi_n^2
 \quad\hbox{uniformly on }[0,S].
\]
The Volterra formula for \eqref{eq:YA-equation}, followed by the same
derivative estimate used above, gives
$Y_A\to\Psi_n$ in $C^1([0,S])$. This proves
\eqref{eq:log-second-profile-convergence}.

We now study the zeros. Every positive zero of $\Phi_n$ is simple.
Indeed, if  $\Phi_n(\rho)=\Phi_n'(\rho)=0$ for some $\rho>0$, then uniqueness for the linear equation \eqref{eq:main-bessel} would imply $\Phi_n\equiv0$, contradicting $\Phi_n(0)=1$. Fix $j\geq1$ and choose $S\in(\rho_j,\rho_{j+1})$.
{The $C^1$ convergence and simple-zero stability} imply that
$w_A$ has exactly one zero near each of $\rho_i$, and no other zeros
in $[0,S]$ for all sufficiently large $A$. Hence $w_A$ has exactly $j$
positive zeros in $[0,S]$, and its $i$-th positive zero converges to
$\rho_i$ for every $1\leq i\leq j$. By the definition of the scaling,
these zeros are $s_i(A):=\sqrt A z_i(\alpha)$. {From $w_A(s_i(A))=0$ and
\eqref{eq:log-second-profile-convergence},
\[
 0=A\Phi_n(s_i(A))+\Psi_n(s_i(A))+o(1).
\]
Because $\Psi_n(s_i(A))$ stays bounded and $\Phi_n'(\rho_i)\ne0$, the
mean-value theorem first gives $s_i(A)-\rho_i=O(A^{-1})$.  A Taylor
expansion of $\Phi_n$ at $\rho_i$, together with
$s_i(A)\to\rho_i$, then yields}
\begin{equation*}
 A(s_i(A)-\rho_i)\longrightarrow
 -\frac{\Psi_n(\rho_i)}{\Phi_n'(\rho_i)}\quad \text{as}~A\rightarrow\infty.
\end{equation*}
To identify the constant, multiply \eqref{eq:log-second-profile} by
$s^{n-1}\Phi_n$ and use the equation for $\Phi_n$. Then
\[
 \bigl[s^{n-1}(\Phi_n\Psi_n'-\Phi_n'\Psi_n)\bigr]'
 =-s^{n-1}\Phi_n^2\log\Phi_n^2.
\]
The regular initial conditions eliminate the contribution at zero, and
evaluation at $\rho_i$ gives
\[
 \rho_i^{n-1}\Phi_n'(\rho_i)\Psi_n(\rho_i)
 =\int_0^{\rho_i}s^{n-1}\Phi_n^2\log\Phi_n^2\,ds.
\]
Finally, the Bessel energy $\frac12(\Phi_n'^2+\Phi_n^2)$ is strictly
decreasing away from the origin. Therefore $|\Phi_n(s)|<1$ for $s>0$,
the last integral is strictly negative, and $c_{i,n}>0$. This proves
\eqref{eq:zeroasymptotic}; in particular, \eqref{eq:z-zero} follows.
\end{proof}

\begin{remark*}
{\rm The regular solution $\Phi_n$ admits the representation
\[
 \Phi_n(s)=2^{\frac n2-1}\Gamma\!\left(\frac n2\right)
 s^{1-\frac n2}J_{\frac n2-1}(s).
\]
Consequently, $\rho_j$ is the $j$-th positive zero of the Bessel function of the first kind $J_{n/2-1}$.}
\end{remark*}

\subsection{The logarithmic finite-ball branch}
We now combine the results established in the preceding subsections to prove Theorem~\ref{thm:log-main}.

\begin{proof}[Proof of Theorem~\ref{thm:log-main}]We prove the theorem in three steps.
\mbox{}\par
\medskip
\noindent\emph{Step 1: Existence.}
Fix $R>0$ and {$k\geq0$}, and set $j=k+1$. By Lemma~\ref{lem:log-left}, the zero curve $z_j(\alpha)$ is defined precisely on $(\alpha_{j-1},\infty)=(\alpha_k,\infty)$, is of class $C^1$, and is strictly decreasing. Moreover, \eqref{eq:z-infty} and \eqref{eq:z-zero} give
 \begin{equation*}
 \lim_{\alpha\downarrow\alpha_k}z_{k+1}(\alpha)=\infty,
 \qquad
 \lim_{\alpha\to\infty}z_{k+1}(\alpha)=0.
 \end{equation*}
Consequently, $z_{k+1}:(\alpha_k,\infty)\rightarrow (0,\infty)$ is a {strictly decreasing continuous bijection}.
Hence there exists a unique $\beta_k(R)>\alpha_k$ such that
 \begin{equation*}
 z_{k+1}(\beta_k(R))=R.
 \end{equation*}
Define
\[
 U_{k,R}(r):=u(r,\beta_k(R)),
 \qquad 0\leq r\leq R.
\]
Then $U_{k,R}(R)=0$. Since $R$ is the $(k+1)$-st positive zero of $u(\cdot,\beta_k(R))$, the interval $(0,R)$ contains exactly $k$ zeros. The interior zeros are simple by Lemma~\ref{lem:log-global}, while its one-sided version shows that the boundary zero is also simple. In particular, $U_{k,R}'(R)\ne0$. Thus $U_{k,R}$ is a radial solution of \eqref{eq:main-log-ball}, positive at the origin, with exactly $k$
simple interior zeros and a simple boundary zero.

\medskip
\noindent\emph{Step 2: Uniqueness.}
Let $U$ be a nontrivial radial solution of \eqref{eq:main-log-ball} with exactly $k$ zeros in $(0,R)$. If
$U(0)=0$, radial regularity gives $U'(0)=0$, and the uniqueness statement in Lemma~\ref{lem:log-global} implies that $U\equiv0$, a contradiction. Hence $U(0)\ne0$. Replacing $U$ by $-U$ if necessary, we may assume that $U(0)=\alpha>0$. Regularity at the origin gives $U'(0)=0$, and uniqueness of the
regular shooting problem yields
\[
 U(r)=u(r,\alpha),
 \qquad 0\leq r\leq R.
\]
By Lemma~\ref{lem:log-global} and its one-sided boundary version, all
zeros of $U$ in $[0,R]$ are simple. Since $U$ has exactly $k$ zeros
in $(0,R)$ and satisfies $U(R)=0$, the point $R$ is the $(k+1)$-st
positive zero of $u(\cdot,\alpha)$. Therefore, $z_{k+1}(\alpha)=R$.
The injectivity of $z_{k+1}$ gives $\alpha=\beta_k(R)$,
and hence $U=U_{k,R}$. Restoring the possible sign change, every
{nontrivial radial solution with exactly $k$ interior zeros is
equal to either $U_{k,R}$ or $-U_{k,R}$.}

\medskip
\noindent\emph{Step 3: Properties.}
{
By Lemma~\ref{lem:log-C1} and Theorem~\ref{thm:log-whole}(v),
{the function $z_{k+1}'$ is continuous and negative}. Differentiating
$z_{k+1}(\beta_k(R))=R$ gives
\[
 \beta_k'(R)=\frac{1}{z_{k+1}'(\beta_k(R))}
 =-\frac{u_r(R,\beta_k(R))}{v(R,\beta_k(R))}<0.
\]
Thus $\beta_k$ is $C^1$ and strictly decreasing. Since
$z_{k+1}:(\alpha_k,\infty)\to(0,\infty)$ is a decreasing bijection,
its inverse has range $(\alpha_k,\infty)$ and satisfies
\[
 \beta_k(R)\to\infty\quad(R\downarrow0),
 \qquad
 \beta_k(R)\to\alpha_k\quad(R\to\infty).
\]
Put $A_R=\log\beta_k^2(R)$, $\rho=\rho_{k+1}$, and
$c=c_{k+1,n}$. Lemma~\ref{lem:log-bessel} gives
\[
 R\sqrt{A_R}=\rho+\frac{c}{A_R}+o(A_R^{-1}).
\]
Since $R^2A_R\to\rho^2$, squaring and rearranging yield
\[
 A_R=\frac{\rho^2}{R^2}+\frac{2c}{\rho}+o(1).
\]
The definitions \eqref{eq:zeroasymptotic} and
\eqref{eq:main-log-kappa} give $2c/\rho=\kappa_{k+1,n}$, proving the
second-order expansion in \eqref{eq:main-log-limits}.
Moreover, Lemma~\ref{lem:log-C1} gives
$u(\cdot,\beta_k(R))\to u(\cdot,\alpha_k)$ in $C^1$ on every compact
interval as $R\to\infty$. The simplicity of the zeros implies
convergence of the corresponding nodal radii. On compact sets that
avoid the limiting nodal set, the logarithmic nonlinearity is smooth,
so the equation upgrades the convergence to $C^2$. This proves
\eqref{eq:main-log-convergence} and completes the proof of
Theorem~\ref{thm:log-main}.}
\end{proof}
\section{The sublinear Dirichlet problem}
\label{sec:proof-sub}
Throughout this section, we write $\alpha_k$ and $\beta_k$ for
$\alpha_k^{\rm sub}$ and $\beta_k^{\rm sub}$, respectively, and set
\[
 F(s)=\int_0^s\bigl(t-|t|^{q-2}t\bigr)\,dt
 =\frac12s^2-\frac1q|s|^q.
\]
The zero-curve properties needed for
Theorem~\ref{thm:sub-main} follow from the shooting theory of
Zhang--Zhang \cite{ZhangZhang}, supplemented by a Bessel-type limit
as the shooting height tends to infinity. We establish these
properties first. The subsequent existence and uniqueness argument
parallels that for the logarithmic problem, whereas the behavior near
the compact-support endpoint requires a separate analysis.

\subsection{Sublinear shooting and zero curves}
For the sublinear nonlinearity $f_{\rm sub}$ in \eqref{eq:intro-sub},
 the regular radial shooting problem is
\begin{equation}
 \begin{cases}
 u''+\dfrac{n-1}{r}u'+u-|u|^{q-2}u=0,\qquad  r>0,\\[4pt]
 u(0)=\alpha>0,\qquad u'(0)=0.
 \end{cases}
 \label{eq:sublinear-shooting}
\end{equation}
We denote by $u(\cdot,\alpha)$ the solution with shooting height $\alpha$,
continued up to its first double zero. Let \(z_u(\alpha)\in(0,\infty]\) denote the location of this first double zero, with the convention that
$z_u(\alpha)=\infty$ if no double zero occurs. Every zero of
$u(\cdot,\alpha)$ in $(0,z_u(\alpha))$ is simple.

Zhang--Zhang \cite[Theorem~1.1]{ZhangZhang} proved that there exists a strictly increasing sequence
\[
 \alpha_0<\alpha_1<\alpha_2<\cdots,\qquad
 \alpha_k\rightarrow\infty,
\]
such that $u(\cdot,\alpha_0)$ is the unique compactly supported ground
state and, for every $k\geq1$, $u(\cdot,\alpha_k)$ is the unique
compactly supported {$k$-node bound state}. We denote the support radius of $u(\cdot,\alpha_k)$ by $S_k$. By
\cite[Remark~2.8]{ZhangZhang}, the solution reaches its first double
zero precisely at the boundary of its support. Hence $S_k=z_u(\alpha_k)$. We also recall that $0<\rho_1<\rho_2<\cdots$ are the positive zeros of
the Bessel profile introduced in \eqref{eq:main-bessel}, and each of these
zeros is simple.

\begin{lemma}
\label{lem:sublinear-zero-curve}
Fix $k\geq0$. For every $\alpha>\alpha_k$, let
$u(\cdot,\alpha)$ be the regular shooting solution of
\eqref{eq:sublinear-shooting}, and let $v(\cdot,\alpha)=\partial_\alpha u(\cdot,\alpha)$ be its shooting variation.
Then the first $k+1$ positive zeros
exist, are all simple, and satisfy
\[
 0<z_1(\alpha)<\cdots<z_{k+1}(\alpha)<z_u(\alpha),
\]
where $z_j(\alpha)$ denotes the $j$-th positive zero of
$u(\cdot,\alpha)$.
For every \(1\leq j\leq k+1\), the zero curve $\alpha\mapsto z_j(\alpha)$ is of class \(C^1\) on \((\alpha_k,\infty)\), and satisfies
\begin{equation}\label{eq:sublinear-zero-derivative-formula}
 z_{j}'(\alpha)
=
 -\frac{v(z_{j}(\alpha),\alpha)}
        {u_r(z_{j}(\alpha),\alpha)}.
\end{equation}
Moreover, the shooting variation satisfies
\begin{equation}
 u_r(z_j(\alpha),\alpha)\,
 v(z_j(\alpha),\alpha)>0,
 \label{eq:sublinear-zero-sign}
\end{equation}
and hence
\begin{equation}
 z_{j}'(\alpha)<0.
 \label{eq:zero-curve-derivative}
\end{equation}
\end{lemma}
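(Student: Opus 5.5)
The plan mirrors the logarithmic case: take the shooting classification and the variation interlacing from Zhang--Zhang as external input, and supply the local Cauchy theory through simple sublinear zeros myself. I will do it in three steps.

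\emph{Step 1: existence and simplicity of the first $k+1$ zeros.} By \cite[Theorem~1.1]{ZhangZhang} and its phase-transition description, for $\alpha>\alpha_k$ the shooting solution crosses zero at least $k+1$ times before its first double zero. So the zeros $z_1(\alpha)<\dots<z_{k+1}(\alpha)$ exist and lie in $(0,z_u(\alpha))$. They are simple by the definition of $z_u(\alpha)$ as the first double zero.

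\emph{Step 2: differentiability.} Fix $\alpha_0>\alpha_k$ and choose $R\in(z_{k+1}(\alpha_0),z_u(\alpha_0))$ with $u(R,\alpha_0)\neq0$. On $[0,R]$ the solution has finitely many zeros, all simple. Near a simple zero $z$ we have $|u(r)|\ge c|r-z|$. Along segments joining two nearby solutions, the linearized coefficient $f'(u)=1-(q-1)|u|^{q-2}$ is therefore bounded by $C(1+|r-z|^{q-2})$. Since $q-2>-1$, this is integrable.

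I then repeat the proof of Lemma~\ref{lem:log-C1} essentially verbatim, with Lemma~\ref{lem:log-L1} as the linear engine:
\begin{itemize}
\item Uniqueness across simple zeros follows from the difference equation $w''+\frac{n-1}{r}w'+A(r)w=0$ with $A\in L^1$.
\item Continuous dependence in $C^1([0,R])$ follows from Arzel\`a--Ascoli and this uniqueness. Energy bounds make the family uniformly bounded on $[0,R]$.
\item Simple zeros of nearby profiles persist, so $[0,R]$ still lies below the first double zero.
\item The divided differences of $f$ converge in $L^1$ to $f'(u)$. The key estimate is $|(f(a)-f(b))/(a-b)|\le C(1+|a|^{q-2}+|b|^{q-2})$, and Vitali's theorem applies because the majorants $|r-z_m|^{q-2}$ are uniformly integrable.
\end{itemize}
From this, $\alpha\mapsto u(\cdot,\alpha)$ is $C^1$ into $C^1([0,R])$ near $\alpha_0$, and $v$ solves the integral variation equation. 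Because $u_r(z_j(\alpha_0),\alpha_0)\ne0$, the implicit function theorem gives $z_j\in C^1$, and differentiating $u(z_j(\alpha),\alpha)=0$ yields \eqref{eq:sublinear-zero-derivative-formula}.

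\emph{Step 3: the sign relation \eqref{eq:sublinear-zero-sign}.} This is the main obstacle, and I would import it rather than reprove it. Zhang--Zhang's phase argument gives the interlacing: $v$ has exactly one zero in $(0,z_1)$ and exactly one in each $(z_{j-1},z_j)$ for $j\le k+1$. Their argument uses the bridge quantity $Q_n$, which becomes $Q_2$ in the plane, so it covers every $n\ge2$. I would have to check that their integrations across simple zeros are legitimate with the integrable singular coefficient; this is the analogue of the $B_a$ continuity check in Proposition~\ref{prop:log-planar}, now with bounds of order $|r-z|^{q}$ in place of logarithms.

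Granting the interlacing, the conclusion follows as in Theorem~\ref{thm:log-whole}(v). Since $v(0)=1$, we get $\operatorname{sgn}v(z_j)=(-1)^j$. Since $u(0)>0$ and all zeros are simple, $\operatorname{sgn}u_r(z_j)=(-1)^j$. Hence $u_rv>0$ at each $z_j$, and the formula of Step 2 gives $z_j'<0$.

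If the interlacing were available only at whole-space heights, I would fall back on a fallback continuity argument. The expression $u_rv$ cannot vanish at $z_j$ without $v(z_j)=0$. I would then argue that such a degeneracy would contradict the strict monotonicity of zero curves proved in \cite{ZhangZhang}.
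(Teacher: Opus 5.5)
Your main line is essentially the paper's proof. Zhang--Zhang's classification supplies the first $k+1$ simple zeros before $z_u(\alpha)$, and their variation interlacing (the paper cites \cite[Lemma~5.1]{ZhangZhang}) gives $\operatorname{sgn}u_r(z_j)=\operatorname{sgn}v(z_j)=(-1)^j$. The only difference is that you rederive the $C^1$ dependence by transplanting Lemma~\ref{lem:log-C1}, using the uniformly integrable majorants $|r-z_m|^{q-2}$, where the paper simply cites Zhang--Zhang's $L^1$-coefficient Cauchy theory.

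Discard your fallback, though. Strict monotonicity of $z_j$ does not exclude $z_j'(\alpha)=0$ at isolated heights, so it cannot give the strict inequality \eqref{eq:sublinear-zero-sign}.
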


\begin{proof}
The classification theorem of Zhang--Zhang \cite{ZhangZhang} gives
\[
 N(\alpha)\geq k+1
 \quad\text{for every }\alpha>\alpha_k,
\]
where \(N(\alpha)\) denotes the number of simple zeros of
\(u(\cdot,\alpha)\) lying before
\(z_u(\alpha)\).  Indeed, if
\(\alpha\in(\alpha_m,\alpha_{m+1})\) for some \(m\geq k\), then
\(N(\alpha)=m+1\), whereas if \(\alpha=\alpha_m\) for some
\(m\geq k+1\), the corresponding bound state has exactly \(m\)
simple zeros.  Hence {the first $k+1$ positive zeros} of
\(u(\cdot,\alpha)\) exist, are all simple, and {satisfy}
$0<z_1(\alpha)<\cdots<z_{k+1}(\alpha)<z_u(\alpha)$.

We next establish the differentiability of the zero curves. The nonlinearity
\[
f(s)=s-|s|^{q-2}s
\]
is of class \(C^1\) away from \(s=0\), with
\[
f'(s)=1-(q-1)|s|^{q-2}.
\]
Let \(z\) be a simple zero of \(u(\cdot,\alpha)\). Then, in a
neighborhood of \(z\),
\[
u(r,\alpha)=(r-z)\eta(r),
\qquad
\eta(z)=u_r(z,\alpha)\neq0.
\]
Consequently,
\[
|u(r,\alpha)|^{q-2}
=
|r-z|^{q-2}|\eta(r)|^{q-2}.
\]
Since \(q-2>-1\), it follows that
\[
f'\bigl(u(\cdot,\alpha)\bigr)\in L^1_{\mathrm{loc}}
\]
near every simple zero. The \(L^1\)-coefficient Cauchy theory of
\cite[Proposition~2.1]{ZhangZhang}, together with
\cite[Lemmas~2.2 and 3.1]{ZhangZhang},
therefore shows that the shooting solution $u(\cdot,\alpha)$ {depends $C^1$ on $\alpha$} on every compact interval lying before \(z_u(\alpha)\). Its shooting variation $v(\cdot,\alpha)$
is the unique regular solution, understood across the simple zeros in
the \(L^1\)-coefficient sense, of
\[
 v''+\frac{n-1}{r}v'+\bigl(1-(q-1)|u|^{q-2}\bigr)v=0,
 \qquad
 v(0)=1,
 \qquad
 v'(0)=0.
\]
Fix \(1\leq j\leq k+1\). Since $u_r\bigl(z_j(\alpha),\alpha\bigr)\neq0$,
the implicit function theorem, together with the {continuous dependence of $u(\cdot,\alpha)$ on $\alpha$ in $C^1$}
gives $z_j\in C^1\bigl((\alpha_k,\infty)\bigr)$.
Differentiating $u\bigl(z_j(\alpha),\alpha\bigr)=0$
with respect to \(\alpha\), we obtain \eqref{eq:sublinear-zero-derivative-formula}.

{Zhang--Zhang \cite[Lemma 5.1]{ZhangZhang} gives the required
zero distribution of the variation (with its ground-state and bound-state
cases treated separately there):}
\(v(\cdot,\alpha)\) has exactly one zero in each interval
\((z_{j-1}(\alpha),z_j(\alpha))\), $1\leq j \leq k+1$, where $z_0(\alpha):=0$. Since
\[
u(0,\alpha)>0,
\qquad
v(0,\alpha)=1>0,
\]
the resulting interlacing pattern yields
\[
\operatorname{sgn}u_r\bigl(z_j(\alpha),\alpha\bigr)
=
\operatorname{sgn}v\bigl(z_j(\alpha),\alpha\bigr)
=
(-1)^j,
\qquad 1\leq j\leq k+1.
\]
 Hence \eqref{eq:sublinear-zero-sign} holds. Combining
\eqref{eq:sublinear-zero-derivative-formula} with
\eqref{eq:sublinear-zero-sign}, we obtain \eqref{eq:zero-curve-derivative}.
\end{proof}

\subsection{Endpoint behavior of zero curves}
We next analyze the zero curves at the two endpoints of their domains.
The behavior as \(\alpha\downarrow\alpha_k\) is determined by the
compactly supported \(k\)-node bound state, whereas the regime
\(\alpha\to\infty\) is governed by a Bessel scaling limit. These two
limits are established in the following lemmas.
\begin{lemma}

For every \(k\geq0\), the zero curve \(z_{k+1}\) admits a continuous
extension to the left endpoint \(\alpha=\alpha_k\), given by $z_{k+1}(\alpha_k):=S_k$. Equivalently,
\begin{equation}
 z_{k+1}(\alpha)\uparrow S_k\quad \text{as }\alpha\downarrow\alpha_k,
 \label{eq:sublinear-left-limit}
\end{equation}
where \(S_k\) denotes the support radius of the compactly supported
\(k\)-node bound state $u(\cdot,\alpha_k)$.
\end{lemma}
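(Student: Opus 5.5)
The plan is to use the monotonicity from Lemma~\ref{lem:sublinear-zero-curve} and then squeeze the limit. Since $z_{k+1}$ is strictly decreasing on $(\alpha_k,\infty)$, the one-sided limit $L:=\lim_{\alpha\downarrow\alpha_k}z_{k+1}(\alpha)\in(0,\infty]$ exists, and $z_{k+1}(\alpha)\uparrow L$. It therefore suffices to prove, for every small $\varepsilon>0$,
\[
S_k-\varepsilon\le\liminf_{\alpha\downarrow\alpha_k}z_{k+1}(\alpha)
\quad\text{and}\quad
\limsup_{\alpha\downarrow\alpha_k}z_{k+1}(\alpha)\le S_k+C\varepsilon .
\]

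\emph{Lower bound.} Write $u_k=u(\cdot,\alpha_k)$ and let $\zeta_1<\dots<\zeta_k$ be its simple zeros in $(0,S_k)$. On the terminal nodal domain $(\zeta_k,S_k)$ one has $\sigma_ku_k>0$.
\begin{enumerate}
\item On the compact interval $[0,S_k-\varepsilon]$, which lies before the first double zero $z_u(\alpha_k)=S_k$, the $C^1$ dependence of the shooting solution (the $L^1$-coefficient Cauchy theory quoted in the proof of Lemma~\ref{lem:sublinear-zero-curve}) gives $u(\cdot,\alpha)\to u_k$ in $C^1([0,S_k-\varepsilon])$.
\item I will check that this dependence holds up to the limiting double zero excluded, i.e.\ uniformly on that compact interval.
\item Then the simple-zero stability argument of Lemma~\ref{lem:log-left} applies verbatim: for $\alpha$ close to $\alpha_k$, $u(\cdot,\alpha)$ has exactly $k$ zeros in $[0,S_k-\varepsilon]$. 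Hence $z_{k+1}(\alpha)>S_k-\varepsilon$, and so $L\ge S_k$.
\end{enumerate}

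\emph{Upper bound (main obstacle).} A compactness argument alone fails here. A $C^1$ limit of profiles that do not cross on $[S_k,S_k+\delta]$ could simply be a nonunique continuation of $u_k$, for instance $u\equiv0$ after $S_k$. I will therefore use a quantitative energy argument with $E=\frac12u'^2+F(u)$ and $E'=-\frac{n-1}{r}u'^2$.

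Fix $\varepsilon$ and set $r_0=S_k-\varepsilon$.
\begin{enumerate}
\item Along $u_k$, $E$ is nonincreasing and equals $0$ at $S_k$. It is strictly decreasing wherever $u_k'\ne0$, so $e:=E_k(r_0)>0$.
\item Since $|u_k(r_0)|$ is small while $F(s)\approx-\frac1q|s|^q$, we get $\frac12u_k'(r_0)^2\ge e$. On the terminal domain $u_k$ and $u_k'$ are small, by continuity at the double zero (or by the free-boundary scaling, if it is already available).
\item By the $C^1$ convergence at $r_0$, for $\alpha$ near $\alpha_k$: $\sigma_ku(r_0,\alpha)\in(0,2\sigma_ku_k(r_0))$, $\sigma_ku'(r_0,\alpha)<0$, and $E(r_0,\alpha)\ge e/2$.
\item As long as $u(\cdot,\alpha)$ keeps the sign $\sigma_k$ and $|u|\le2|u_k(r_0)|$ on $[r_0,r]$, we have $|u'|^2\le2E+\frac2q|u|^q\le\eta(\varepsilon)$, with $\eta(\varepsilon)\to0$ as $\varepsilon\to0$.
\item The energy loss on an interval of length $\ell$ is then at most $\frac{n-1}{r_0}\ell\,\eta(\varepsilon)$.
\end{enumerate}
Choosing $\ell$ of order $|u_k(r_0)|/\sqrt{e}$ keeps this loss below $e/4$, so $E\ge e/4$ persists on $[r_0,r_0+\ell]$. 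On the other hand, $E\ge e/4$ together with $F\le0$ for small $|u|$ forces $|u'|\ge\sqrt{e/2}$. This keeps $u'$ of fixed sign, so $|u|$ decreases, staying below $2|u_k(r_0)|$ and keeping the estimates self-consistent. Consequently $u$ reaches zero before $r_0+\ell$, at a simple zero since $u'\ne0$ there. That zero is the $(k+1)$-st zero, so $z_{k+1}(\alpha)\le r_0+\ell$.

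It remains to check that $\ell$ is small, namely $|u_k(r_0)|/\sqrt e\to0$ as $\varepsilon\to0$, and that $\ell\,\eta(\varepsilon)\ll e$. For the power-rate behaviour $|u_k(r_0)|\sim\varepsilon^{2/(2-q)}$ and $e\sim\varepsilon^{2q/(2-q)}$ one gets $\ell\sim\varepsilon$, which is consistent. In general I will choose $r_0$ to make these quantities comparable, using $E_k(r)\ge\frac12u_k'(r)^2-\frac1q|u_k(r)|^q$ and the relation $u_k(r)=-\int_r^{S_k}u_k'$.

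Letting $\varepsilon\to0$ yields $L\le S_k$. Combined with the lower bound, this gives \eqref{eq:sublinear-left-limit} and the continuous extension $z_{k+1}(\alpha_k):=S_k$.
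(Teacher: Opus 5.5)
\textbf{Verdict: there is a genuine gap.} Your monotonicity reduction to $L:=\lim_{\alpha\downarrow\alpha_k}z_{k+1}(\alpha)$ is fine. So is your lower bound, obtained from $C^1$ convergence on $[0,S_k-\varepsilon]$ and simple-zero stability; this is exactly how the paper controls the first $k$ zeros. The upper bound, however, does not work as written.

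\textbf{The scaling of $e$ is wrong.} Near the free boundary the leading terms of $\frac12u_k'^2$ and $-\frac1q|u_k|^q$ cancel; this is the content of \eqref{eq:free-first-integral}. Consequently $e=E_k(r_0)=\int_{r_0}^{S_k}\frac{n-1}{s}u_k'^2\,ds\asymp\varepsilon^{(q+2)/(2-q)}$ by \eqref{eq:main-free-du}, not $\varepsilon^{2q/(2-q)}$. With $|u_k(r_0)|\asymp\varepsilon^{2/(2-q)}$ and $\eta\asymp|u_k(r_0)|^q$, your choice gives $\ell\asymp\varepsilon^{1/2}$ and $\ell\eta/e\asymp\varepsilon^{-1/2}\to\infty$. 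So the dissipation is not below $e/4$. No choice of $r_0$ repairs this. Even the sharper bound $\int u'^2\le\sup|u'|\,|u(r_0)|$ only gives a loss comparable to $e$. It has to, since along $u_k$ itself the energy dissipated on $[r_0,S_k]$ is exactly $e$.

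\textbf{More fundamentally, the argument cannot see the sign of $\alpha-\alpha_k$.} Your upper bound uses only the $C^1$-closeness of $u(\cdot,\alpha)$ to $u_k$ at $r_0$ together with energy identities. For $\alpha<\alpha_k$ close to $\alpha_k$ the same data at $r_0$ hold. Yet by the Zhang--Zhang classification such an orbit has only $k$ simple zeros before its first double zero. Your argument would produce a simple $(k+1)$-st zero near $S_k$ for these $\alpha$ as well, so it cannot be correct. Whether the perturbed orbit reaches zero with positive energy is a first-order effect in $\alpha-\alpha_k$, and a one-sided input is indispensable.

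\textbf{How the paper supplies the one-sided input.} It works through the variation.
\begin{itemize}
\item Near $S_k$ one has $\sigma_kv_k<0$ and $\sigma_kv_k'<0$ (Lemma~5.1 of Zhang--Zhang, quoted in the paper's proof). Hence for $\beta>\alpha_k$ close to $\alpha_k$, the difference $d_\beta=\sigma_k(u(\cdot,\beta)-u_k)$ satisfies $d_\beta(R)<0$ and $d_\beta'(R)<0$ at a point $R$ near $S_k$.
\item In the region $0<\sigma_ku<a$, where $f'<0$, the difference-quotient coefficient $c_\beta$ is negative. Thus $(r^{n-1}d_\beta')'=-r^{n-1}c_\beta d_\beta<0$ while $d_\beta<0$.
\item A first-contact argument then keeps $d_\beta<0$ and $d_\beta'<0$ up to $S_k$. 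If $u(\cdot,\beta)$ had no zero in $(R,S_k)$, this would force $0\le\sigma_ku(S_k,\beta)=d_\beta(S_k)<0$, a contradiction.
\end{itemize}
This gives $R<z_{k+1}(\beta)<S_k$ directly. That is the missing upper bound, and it also yields the strict inequality behind the arrow $\uparrow$.
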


\begin{proof}
Set
\[
u_k:=u(\cdot,\alpha_k),
\qquad
v_k:=v(\cdot,\alpha_k),
\qquad
\sigma_k:=(-1)^k.
\]
The compactly supported bound state \(u_k\) has exactly \(k\)
simple zeros in \((0,S_k)\) and satisfies $u_k(S_k)=u_k'(S_k)=0$.
Since \(u_k(0)>0\), its sign on the terminal nodal interval is
\((-1)^k\). Hence $\sigma_k u_k(r)>0$
to the right of its last simple zero.
{The ground-state and bound-state alternatives in
\cite[Lemma~5.1(b),(c)]{ZhangZhang} show that $v_k$ has a last zero before
$S_k$, is eventually monotone, and satisfies
$|v_k(r)|\to\infty$ as $r\uparrow S_k$.}
Together with the phase-transition sign pattern, this implies that,
sufficiently close to \(S_k\),
\[
\sigma_k u_k(r)>0,
\qquad\sigma_k u_k'(r)<0,
\qquad\sigma_k v_k(r)<0,
\qquad
\sigma_k v_k'(r)<0.
\]
Choose \(a>0\) so small that
\[
f'(s)=1-(q-1)s^{q-2}<0
\quad\text{for }0<s<a.
\]
{Fix $\varepsilon\in(0,S_k)$.} We may then choose
$R\in(S_k-\varepsilon,S_k)$
to the right of all the simple zeros of \(u_k\) and sufficiently
close to \(S_k\) so that
\begin{equation}\label{inequa}
  0<\sigma_k u_k(r)<a,
\qquad
\sigma_k u_k'(r)<0,
\qquad
\sigma_k v_k(r)<0,
\qquad
\sigma_k v_k'(r)<0\quad\text{for }R\leq r<S_k.
\end{equation}

We give the terminal jumping argument.  By differentiability with respect to
the shooting height on the fixed interval $[0,R]$ and the last two signs in
\eqref{inequa}, there is $\delta_1>0$ such that, for
$\alpha_k<\beta<\alpha_k+\delta_1$,
\begin{equation}\label{eq:terminal-initial-difference}
 d_\beta(R)<0,
 \qquad d_\beta'(R)<0,
 \qquad
 d_\beta:=\sigma_k\bigl(u(\cdot,\beta)-u_k\bigr).
\end{equation}
Suppose that $u(\cdot,\beta)$ has no zero in $(R,S_k)$.  As long as
$0<\sigma_k u(r,\beta)<a$, the difference satisfies
\[
 (r^{n-1}d_\beta')'+r^{n-1}c_\beta(r)d_\beta=0,
 \qquad
 c_\beta(r)=
 \frac{f(\sigma_k u(r,\beta))-f(\sigma_k u_k(r))}
 {\sigma_k u(r,\beta)-\sigma_k u_k(r)}<0,
\]
where oddness of $f$ has been used.  Starting from
\eqref{eq:terminal-initial-difference}, a first-contact argument gives
$d_\beta<0$ and $d_\beta'<0$ on $[R,S_k]$: indeed, while
$d_\beta<0$, one has
$(r^{n-1}d_\beta')'=-r^{n-1}c_\beta d_\beta<0$, so the derivative
cannot vanish.  In particular,
$0<\sigma_k u(r,\beta)<\sigma_k u_k(r)<a$, and the comparison remains
valid up to $S_k$.  But then
\[
 0\leq \sigma_k u(S_k,\beta)
 =d_\beta(S_k)<d_\beta(R)<0,
\]
a contradiction.  Hence $u(\cdot,\beta)$ has a zero in $(R,S_k)$.
It is simple by Lemma~\ref{lem:sublinear-zero-curve}, which applies
because $\beta>\alpha_k$.
It remains to identify this zero relative to the preceding zeros.
By the {continuous dependence of the shooting solution on the
initial height in $C^1$},
\[
u(\cdot,\beta)\rightarrow u_k
\quad{\text{in }C^1([0,R])\quad\text{as }\beta\downarrow\alpha_k.}
\]
Since the \(k\) zeros of \(u_k\) in \((0,R)\) are simple, each of
them persists as a unique simple zero of \(u(\cdot,\beta)\) for
\(\beta>\alpha_k\) sufficiently close to \(\alpha_k\). Furthermore,
by choosing disjoint neighborhoods of these zeros and using uniform
convergence on their complement, we see that no additional zeros
occur in \([0,R]\). Thus there exists \(\delta_2>0\) such that
\(u(\cdot,\beta)\) has exactly \(k\) zeros in \((0,R)\) whenever $\alpha_k<\beta<\alpha_k+\delta_2$.
Consequently, for $\alpha_k<\beta<\alpha_k+\min\{\delta_1,\delta_2\}$,
the first zero of \(u(\cdot,\beta)\) lying to the right of \(R\) is
its \((k+1)\)-st positive zero. Since at least one such zero lies in
\((R,S_k)\), we obtain
\[
R<z_{k+1}(\beta)<S_k.
\]
It follows from \(R>S_k-\varepsilon\) that
\[
S_k-\varepsilon
<
z_{k+1}(\beta)
<
S_k
\]
for every \(\beta>\alpha_k\) sufficiently close to \(\alpha_k\).
Since \(\varepsilon>0\) is arbitrary, we conclude that
\[
z_{k+1}(\beta)\uparrow S_k\quad \text{as }\beta\downarrow\alpha_k,
\]
which is \eqref{eq:sublinear-left-limit}.
\end{proof}

\begin{lemma}
\label{lem:sublinear-bessel-limit}
For every fixed $j\ge1$, the zero curve \(z_j\), defined on
\((\alpha_{j-1},\infty)\), satisfies
\begin{equation}
 z_j(\alpha)\downarrow\rho_j
 \quad\text{as }\alpha\to\infty.
 \label{eq:sublinear-right-limit-general}
\end{equation}
More precisely,
\begin{equation}
 \lim_{\alpha\to\infty}\alpha^{2-q}\bigl(z_j(\alpha)-\rho_j\bigr)
 = \mathfrak c_{j,q,n},
 \label{eq:sublinear-first-order-zero}
\end{equation}
where $\mathfrak c_{j,q,n}$ is defined in
\eqref{eq:sub-constant-intro}.
\end{lemma}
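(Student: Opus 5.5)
The plan is to mimic Lemma~\ref{lem:log-bessel} with the scaling adapted to the sublinear nonlinearity. Set $\varepsilon:=\alpha^{q-2}\to0$ and $w_\alpha(s):=u(s,\alpha)/\alpha$. No spatial rescaling is needed, since the linear part $u$ already has unit coefficient. Then
\[
 w''+\frac{n-1}{s}w'+w-\varepsilon|w|^{q-2}w=0,\qquad w(0)=1,\ w'(0)=0 .
\]
First I obtain a uniform bound. For $\alpha$ large, $F(\alpha)>0$ and $F$ is increasing on $(1,\infty)$, so the energy monotonicity $F(u)\le E(r)\le F(\alpha)$ gives $|u|\le\alpha$, that is, $|w_\alpha|\le1$, up to $z_u(\alpha)$.

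The nonlinear term is then bounded by $\varepsilon$. The Volterra formulation and the Arzel\`a--Ascoli argument of Lemma~\ref{lem:log-bessel} give $w_\alpha\to\Phi_n$ in $C^1([0,S])$ for each $S$. One point needs care: the solution must exist up to $S$, i.e. $z_u(\alpha)>S$. I will argue this a posteriori. On any interval where $w_\alpha$ is $C^1$-close to $\Phi_n$ and $\Phi_n$ has only simple zeros, $w_\alpha$ cannot reach a double zero. Hence a continuation argument on $[0,S]$ with $S\in(\rho_j,\rho_{j+1})$ works.

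Next I look at the first-order correction. Put $Y_\alpha:=(w_\alpha-\Phi_n)/\varepsilon$. It solves
\[
 Y''+\frac{n-1}{s}Y'+Y=|w_\alpha|^{q-2}w_\alpha,\qquad Y(0)=Y'(0)=0 .
\]
The right-hand side is not bounded near the zeros of $\Phi_n$, but $|w_\alpha|^{q-1}\le1$ is bounded, and $|w|^{q-2}w$ is continuous because $q>1$. So $|w_\alpha|^{q-2}w_\alpha\to|\Phi_n|^{q-2}\Phi_n$ uniformly on $[0,S]$. The Volterra formula then gives $Y_\alpha\to\Psi$ in $C^1([0,S])$, where $\Psi$ is the regular solution with right-hand side $|\Phi_n|^{q-2}\Phi_n$.

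For the zeros, simple-zero stability identifies $z_j(\alpha)$ with the $j$-th zero $s_j$ of $w_\alpha$, and $s_j\to\rho_j$. From $0=\Phi_n(s_j)+\varepsilon\Psi(s_j)+o(\varepsilon)$ and a Taylor expansion I get $(s_j-\rho_j)/\varepsilon\to-\Psi(\rho_j)/\Phi_n'(\rho_j)$. The Wronskian identity
\[
 \bigl[s^{n-1}(\Phi_n\Psi'-\Phi_n'\Psi)\bigr]'=s^{n-1}|\Phi_n|^q
\]
gives $-\rho_j^{n-1}\Phi_n'(\rho_j)\Psi(\rho_j)=\int_0^{\rho_j}s^{n-1}|\Phi_n|^q\,ds$. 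Therefore
\[
 \alpha^{2-q}(z_j-\rho_j)\to\frac{\int_0^{\rho_j}s^{n-1}|\Phi_n|^q}{\rho_j^{n-1}\Phi_n'(\rho_j)^2}=\mathfrak c_{j,q,n}>0 .
\]
This limit, combined with the strict monotonicity from Lemma~\ref{lem:sublinear-zero-curve}, yields the monotone convergence $z_j\downarrow\rho_j$.

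The main obstacle is the existence issue, namely guaranteeing $z_u(\alpha)>S$ and that $z_j(\alpha)$ is the $j$-th zero before any double zero. I will handle it by a bootstrap. Let $S^*=\min\{S,z_u(\alpha)\}$. The compactness argument on $[0,S^*]$ gives closeness to $\Phi_n$ there, uniformly in $\alpha$. A double zero at $z_u\le S$ would then have to be close to a simple zero of $\Phi_n$, or to a point where $\Phi_n\ne0$. Both are impossible under $C^1$-closeness, because at a double zero both $w$ and $w'$ vanish.
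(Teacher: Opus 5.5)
Your proposal is correct and follows essentially the same route as the paper. Both use the amplitude normalization $w_\alpha=u(\cdot,\alpha)/\alpha$ with $\varepsilon_\alpha=\alpha^{q-2}$ and no spatial rescaling, the energy bound $|w_\alpha|\le1$, Volterra/Arzel\`a--Ascoli convergence to $\Phi_n$, the corrector $(w_\alpha-\Phi_n)/\varepsilon_\alpha\to\Psi_n$ driven by the bounded continuous term $|w_\alpha|^{q-2}w_\alpha$, and the Wronskian identity giving $-\Psi_n(\rho_j)/\Phi_n'(\rho_j)=\mathfrak c_{j,q,n}$.

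The only difference is in handling a possible first double zero. The paper zero-extends the orbit beyond $z_u(\alpha)$, so that $w_\alpha$ solves the scaled equation on all of $(0,\infty)$. Your bootstrap instead shows directly that $z_u(\alpha)>S$ for large $\alpha$, using a Gronwall bound $\|w_\alpha-\Phi_n\|_{C^1([0,\min\{S,z_u(\alpha)\}])}=O(\varepsilon_\alpha)$ together with $\inf_{[0,S]}(|\Phi_n|+|\Phi_n'|)>0$; this is equally valid.
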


\begin{proof}
For $\alpha>1$, set
\[
 w_\alpha(r):=\frac{u(r,\alpha)}{\alpha},
 \qquad
 \varepsilon_\alpha:=\alpha^{q-2}.
\]
Since $1<q<2$, we have $\varepsilon_\alpha\rightarrow0$ as $\alpha\to\infty$.
If the shooting solution reaches a first double zero, we use its zero extension beyond that point. Thus $w_\alpha$ satisfies
\begin{equation}
 \begin{cases}
 w_\alpha''+\dfrac{n-1}{r}w_\alpha'+w_\alpha
 -\varepsilon_\alpha|w_\alpha|^{q-2}w_\alpha=0,\qquad  r>0,\\[4pt]
 w_\alpha(0)=1,\qquad w_\alpha'(0)=0.
 \end{cases}
 \label{eq:sublinear-scaled}
\end{equation}
  The energy inequality gives
$|u(r,\alpha)|<\alpha$ before the first double zero. Hence, with the zero extension,
\begin{equation}
|w_\alpha(r)|\leq1
\quad\text{for all }r\geq0.
\label{eq:upper1}
\end{equation}
The Volterra representation of \eqref{eq:sublinear-scaled} is
\begin{align}
 w_\alpha(r)=1-
 \int_0^r t^{1-n}\int_0^t s^{n-1}
 \Bigl(w_\alpha(s)-\varepsilon_\alpha
 |w_\alpha(s)|^{q-2}w_\alpha(s)\Bigr)\,ds\,dt.
 \label{eq:sublinear-volterra}
\end{align}
In particular,
\begin{equation*}
w_\alpha'(r)=
-r^{1-n}\int_0^r s^{n-1}
\Bigl(
w_\alpha(s)
-\varepsilon_\alpha
|w_\alpha(s)|^{q-2}w_\alpha(s)
\Bigr)ds.
\end{equation*}
Since {$|w_\alpha|\leq1$} and $\varepsilon_\alpha\rightarrow0$,
\begin{equation*}
\big|w_\alpha-\varepsilon_\alpha
|w_\alpha|^{q-2}w_\alpha\big|\leq |w_\alpha|+\varepsilon_\alpha|w_\alpha|^{q-1}\leq 1+\varepsilon_\alpha\leq2,
\end{equation*}
and therefore,
\begin{equation*}
|w_\alpha'(r)|\leq
2r^{1-n}\int_0^r s^{n-1}ds\leq \frac{2}{n}r.
\end{equation*}
{For $r>0$, the equation also gives
\[
 |w_\alpha''(r)|
 \le \frac{n-1}{r}|w_\alpha'(r)|
 +\bigl|w_\alpha(r)-\varepsilon_\alpha
 |w_\alpha(r)|^{q-2}w_\alpha(r)\bigr|
 \le \frac{2(n-1)}{n}+2.
\]
The same estimate at $r=0$ follows from the regular integral
formulation. Hence $\{w_\alpha'\}$ is equi-Lipschitz on every fixed
compact interval.}
Hence, for every
$L>0$, the families $\{w_\alpha\}$ and $\{w_\alpha'\}$ are uniformly
bounded and equicontinuous on $[0,L]$. The Arzel\`{a}--Ascoli theorem
therefore yields relative compactness of $\{w_\alpha\}$ in
$C^1([0,L])$. Let $\tilde w$ be the $C^1$-limit of
a subsequence. Passing to the limit in \eqref{eq:sublinear-volterra}, and using
$\varepsilon_{\alpha}\to0$, we find that $\tilde w$ solves \eqref{eq:main-bessel}. By the uniqueness of the regular solution, $\tilde w=\Phi_n$. Since every convergent subsequence has the same limit, the whole family satisfies
\begin{equation}\label{eq:y-to-Phi}
w_\alpha\rightarrow\Phi_n\quad \text{in }C^1_{\rm loc}([0,\infty)){\quad\text{as }\alpha\to\infty}.
\end{equation}
By the $C^1_{\mathrm{loc}}$ convergence in \eqref{eq:y-to-Phi}, the simplicity of the zeros of $\Phi_n$, and the argument used in the proof of Lemma~\ref{lem:log-bessel}, we obtain
 $z_j(\alpha)\rightarrow\rho_j$ as $\alpha\to\infty$. Since $z_j$ is strictly decreasing in $\alpha$ by \eqref{eq:zero-curve-derivative}, it follows that $z_j(\alpha)\downarrow\rho_j$ as $\alpha\to\infty$,
which proves \eqref{eq:sublinear-right-limit-general}.

We now retain the first perturbative term.  Define
\[
 y_\alpha:=\frac{w_\alpha-\Phi_n}{\varepsilon_\alpha}.
\]
Subtracting the equation for $\Phi_n$ from
\eqref{eq:sublinear-scaled} and dividing by
$\varepsilon_\alpha$ gives
\begin{equation}
 \begin{cases}
 y_\alpha''+\dfrac{n-1}{r}y_\alpha'+y_\alpha
 =|w_\alpha|^{q-2}w_\alpha,\qquad  r>0,\\[4pt]
 y_\alpha(0)=y_\alpha'(0)=0.
 \end{cases}
 \label{eq:y-alpha}
\end{equation}
The function $g(s)=|s|^{q-2}s$ is continuous, and hence uniformly continuous, on $[-1,1]$.
Consequently, \eqref{eq:upper1} and \eqref{eq:y-to-Phi} imply
$g(w_\alpha)\to g(\Phi_n)$ uniformly on every compact interval.
The Volterra formulation of \eqref{eq:y-alpha}, together with the
standard continuous-dependence estimate for the regular linear
equation, therefore gives
\begin{equation*}
 y_\alpha\rightarrow\Psi_n
 \quad\text{in }C^1_{\rm loc}([0,\infty)){\quad\text{as }\alpha\rightarrow\infty},
\end{equation*}
where $\Psi_n$ is the unique regular solution of
\begin{equation}
 \begin{cases}
 \Psi''+\dfrac{n-1}{r}\Psi'+\Psi
 =|\Phi_n|^{q-2}\Phi_n,\qquad  r>0,\\[4pt]
 \Psi(0)=\Psi'(0)=0.
 \end{cases}
 \label{eq:Psi-equation}
\end{equation}
For brevity, write $z_j=z_j(\alpha)$ and $\rho=\rho_j$.
{Since $w_\alpha(z_j)=0$,
\[
 \Phi_n(z_j)+\varepsilon_\alpha y_\alpha(z_j)=0.
\]
The local uniform boundedness of $y_\alpha$ gives
$|\Phi_n(z_j)|=O(\varepsilon_\alpha)$.  Because $\rho$ is a simple zero
and $z_j\to\rho$, this first implies
$z_j-\rho=O(\varepsilon_\alpha)$.  Applying the mean-value theorem and
using $y_\alpha(z_j)\to\Psi_n(\rho)$ then gives}
\begin{equation}
\frac{z_j-\rho}{\varepsilon_\alpha}
\longrightarrow
-\frac{\Psi_n(\rho)}{\Phi_n'(\rho)}\quad \text{as }\alpha\rightarrow\infty.
\label{eq:zero-shift-Psi}
\end{equation}

It remains to compute the constant on the right-hand side of \eqref{eq:zero-shift-Psi}. Multiply \eqref{eq:Psi-equation} by $\Phi_n$, multiply the equation for $\Phi_n$ by $\Psi_n$, and subtract. This gives %the Wronskian identity
\[
 \Bigl[r^{n-1}(\Phi_n\Psi_n'-\Phi_n'\Psi_n)\Bigr]'
 =r^{n-1}|\Phi_n|^q.
\]
{Integrating over $(0,\rho)$, the contribution at the origin
vanishes by regularity, and $\Phi_n(\rho)=0$. Hence}
\[
 -\rho^{n-1}\Phi_n'(\rho)\Psi_n(\rho)
 =\int_0^\rho r^{n-1}|\Phi_n(r)|^q\,dr.
\]
Since $\Phi_n'(\rho)\neq0$, it follows that
\[
 -\frac{\Psi_n(\rho)}{\Phi_n'(\rho)}
 =\frac{\displaystyle\int_0^\rho
 r^{n-1}|\Phi_n(r)|^q\,dr}
 {\rho^{n-1}\Phi_n'^2(\rho)}
 =\mathfrak c_{j,q,n}>0.
\]
Since $\varepsilon_\alpha=\alpha^{q-2}$,
\eqref{eq:zero-shift-Psi} is precisely
\eqref{eq:sublinear-first-order-zero}.
\end{proof}

\subsection{\texorpdfstring{{Convergence and free-boundary asymptotics at the compact-support endpoint}}{Convergence and free-boundary asymptotics at the compact-support endpoint}}

By Lemmas~\ref{lem:sublinear-zero-curve}--\ref{lem:sublinear-bessel-limit}, the zero curve defines a strictly
decreasing $C^1$ bijection
\[
z_{k+1}:(\alpha_k,\infty)
\longrightarrow(\rho_{k+1},S_k).
\]
For $R\in(\rho_{k+1},S_k)$, we denote its inverse by
$\beta_k(R)$.  {We first prove convergence of the stopped profiles on the
fixed limiting support and then record the free-boundary rates of the endpoint
state.}

\begin{lemma}
\label{lem:sublinear-state-convergence}
Let $R\uparrow S_k$ and set $\beta_R:=\beta_k(R)$,
$U_{k,R}:=u(\cdot,\beta_R)$. Define $\widetilde U_{k,R}$ to be the zero extension of the
{radial profile} $U_{k,R}$ from {$[0,R]$ to $[0,S_k]$}.
Then
\begin{equation}
 \widetilde U_{k,R}\longrightarrow u_k
 \quad\text{in }W^{1,\infty}(0,S_k),
 \label{eq:sub-W1infty}
\end{equation}
where $u_k=u(\cdot,\alpha_k)$ denotes the compactly supported
\(k\)-node bound state.
Moreover, the convergence is locally $C^2$ away from the
simple zeros of $u_k$.
\end{lemma}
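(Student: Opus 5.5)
We have $\beta_R\downarrow\alpha_k$ as $R\uparrow S_k$, because $z_{k+1}$ is a decreasing bijection onto $(\rho_{k+1},S_k)$ and its left limit is $S_k$ by \eqref{eq:sublinear-left-limit}. The plan is to split $[0,S_k]$ into a fixed interval $[0,R_0]$ with $R_0<S_k$, lying to the right of all simple zeros of $u_k$, and a terminal layer $[R_0,S_k]$. On the fixed interval, the $C^1$ dependence on the shooting height from Lemma~\ref{lem:sublinear-zero-curve} gives $u(\cdot,\beta_R)\to u_k$ in $C^1([0,R_0])$. Once $R>R_0$, the zero extension coincides with $u(\cdot,\beta_R)$ on $[0,R_0]$, so this part of the $W^{1,\infty}$ norm tends to zero.

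The main step is the terminal layer. I will show that, given $\varepsilon>0$, one can choose $R_0$ (and then $R$ close to $S_k$) so that
\[
 \sup_{[R_0,S_k]}\bigl(|\widetilde U_{k,R}|+|\widetilde U_{k,R}'|+|u_k|+|u_k'|\bigr)<\varepsilon .
\]
For $u_k$ this is continuity together with $u_k(S_k)=u_k'(S_k)=0$. For $U_{k,R}$ on $[R_0,R]$ I will reuse the comparison from the proof of \eqref{eq:sublinear-left-limit}. For $\beta$ near $\alpha_k$, the difference $d_\beta=\sigma_k(u(\cdot,\beta)-u_k)$ satisfies $d_\beta<0$ and $d_\beta'<0$ as long as $\sigma_k u(\cdot,\beta)>0$, which holds up to the zero $R=z_{k+1}(\beta_R)$. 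This gives $0<\sigma_kU_{k,R}<\sigma_ku_k<a$ on $[R_0,R)$, so $|U_{k,R}|\le|u_k|<\varepsilon$ there.

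For the derivative I will use the energy $E=\frac12u'^2+F(u)$, which is nonincreasing. This gives
\[
 \tfrac12 U_{k,R}'(r)^2\le E_{\beta_R}(R_0)-F(U_{k,R}(r)),
\]
and $E_{\beta_R}(R_0)\to E_{u_k}(R_0)$ by $C^1$ convergence. Also $E_{u_k}(R_0)$ is small when $R_0$ is close to $S_k$, and $|F(s)|\le C|s|^q$ is small because $|U_{k,R}|<\varepsilon$. Hence $|U_{k,R}'|$ is small uniformly on $[R_0,R]$. On $(R,S_k]$ the zero extension vanishes, and $u_k$ is small there as well.

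Combining the two pieces gives \eqref{eq:sub-W1infty}. The zero extension is Lipschitz, since it is continuous and piecewise $C^1$. The $W^{1,\infty}$ norm is therefore the sup of $|\cdot|+|\cdot'|$, and the jump of the derivative at $R$ is harmless.

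For $C^2_{\rm loc}$ convergence away from the simple zeros, take a compact subset $K$ of $[0,S_k)$ avoiding the zeros of $u_k$; it lies in $[0,R_0]$ for some $R_0<S_k$. On $K$, $|u_k|$ is bounded below, so by $C^0$ convergence $|u(\cdot,\beta_R)|$ is also bounded below. The nonlinearity $f$ is then uniformly continuous, and the equation $u''=-\frac{n-1}ru'-f(u)$ transfers the $C^1$ convergence to $u''$. Near $r=0$ one uses the regular integral formula instead.

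The expected obstacle is the derivative bound in the terminal layer, where $f$ is non-Lipschitz. The energy argument avoids any linearization there, provided the monotone comparison keeps $|U_{k,R}|\le|u_k|$ up to $R$.
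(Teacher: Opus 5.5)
Your proof is correct and has the same overall structure as the paper's: $C^1$ convergence on a fixed interval $[0,R_0]$, the energy bound $\tfrac12 U_{k,R}'^2\le E_{\beta_R}(R_0)-F(U_{k,R})$ in the terminal layer, and the equation to upgrade to $C^2$. You differ in one step, the uniform smallness of $|U_{k,R}|$ on $[R_0,R]$.

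\emph{The paper's route.} The paper gets this bound from monotonicity. It uses the critical-point structure of Zhang--Zhang (Proposition~2.5(ii),(iii) there) together with $C^1$ persistence to show that $U_{k,R}$ has no critical point on $[r_0,R]$. Hence $\sup_{[r_0,R]}|U_{k,R}|\le|U_{k,R}(r_0)|\to|u_k(r_0)|$.

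\emph{Your route.} You rerun the first-contact comparison from the proof of \eqref{eq:sublinear-left-limit}. This gives the pointwise ordering $0<\sigma_kU_{k,R}<\sigma_ku_k<a$ on $[R_0,R)$, which is a somewhat stronger bound. It reuses machinery already in the paper: $f'<0$ on $(0,a)$, and the terminal signs of $v_k$ and $v_k'$ taken from Zhang--Zhang Lemma~5.1. As a byproduct, $d_\beta'<0$ gives $\sigma_kU_{k,R}'<\sigma_ku_k'<0$, which is exactly the monotonicity the paper imports.

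\emph{What you should make explicit.} First, the order of choices. Take $R_0$ inside the region where \eqref{inequa} holds and where the tails of $u_k$ and $E_{\alpha_k}$ are small. Only then take $R$ close enough to $S_k$ that $\beta_R-\alpha_k<\min\{\delta_1,\delta_2\}$, where $\delta_1,\delta_2$ are the constants for that $R_0$. Second, say explicitly why $\sigma_k u(\cdot,\beta_R)>0$ on $[R_0,R)$. This comes from the $\delta_2$ part: $u(\cdot,\beta_R)$ has exactly $k$ zeros in $(0,R_0)$ and $u(R_0,\beta_R)$ has sign $\sigma_k$, so its next zero is $z_{k+1}(\beta_R)=R$. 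The condition $d_\beta(R_0)<0$ by itself gives only the upper bound $\sigma_kU_{k,R}<\sigma_ku_k$, not positivity.
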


\begin{proof}
Since $z_{k+1}(\beta_R)=R$ and
$R\uparrow S_k$, the inverse zero-curve limit gives $\beta_R\downarrow\alpha_k$.
Consequently, for every fixed $r_0<S_k$, the continuous-dependence
results of Zhang--Zhang \cite{ZhangZhang} yield
\begin{equation}
u(\cdot,\beta_R)\rightarrow u_k
\quad\text{in }C^1([0,r_0]).
\label{eq:sub-local-C1}
\end{equation}
On every compact interval that contains no zero of $u_k$, the
nonlinearity is smooth. The equation therefore upgrades
\eqref{eq:sub-local-C1} to local $C^2$ convergence.

It remains to control the terminal interval near $S_k$. Write
\[
E_\gamma(r)
:=
\frac12u_r^2(r,\gamma)+F(u(r,\gamma)).
\]
Since $u_k(S_k)=u_k'(S_k)=0$,
we have $E_{\alpha_k}(r)\rightarrow0$ as $r\uparrow S_k$.
{Choose $r_0<S_k$ as follows.  If $k=0$, the ground state is
strictly decreasing by \cite[Proposition~2.5(ii)]{ZhangZhang}, and the nearby
shooting profiles are also strictly decreasing up to their first zero.  Indeed,
the energy at that simple zero is positive, so $f(u(0))>0$ and $u'<0$ just to
the right of the origin; at any preceding critical point one likewise has
$F(u)>0$ and hence $u''=-f(u)<0$.  If $r_*$ were the first such point, however,
$u'<0$ on $(0,r_*)$ would force $u''(r_*)\ge0$, a contradiction.  If $k\ge1$,
choose $r_0$ to the right of the unique critical point in the terminal nodal
interval of $u_k$.
By \cite[Proposition~2.5(iii)]{ZhangZhang}, the corresponding shooting
profile has exactly one critical point between its last interior zero and its
boundary zero.  That critical point persists to the left of $r_0$ under the
local $C^1$ convergence in \eqref{eq:sub-local-C1}.  Thus, in either case,
for all $R$ sufficiently close to $S_k$, $u(\cdot,\beta_R)$ has no critical
point on $[r_0,R]$ and approaches its boundary zero monotonically.} Hence
\begin{equation}
 \sup_{r_0\le r\le R}|u(r,\beta_R)|
 \le |u(r_0,\beta_R)|.
 \label{eq:tail-u-control}
\end{equation}
Since $E_{\beta_R}'=-(n-1)r^{-1}u_r^2(r,\beta_R)\le0$, it follows from \eqref{eq:tail-u-control} that, for $r\in [r_0,R]$,
\begin{align*}
 \frac12u_r^2(r,\beta_R)
 =E_{\beta_R}(r)-F(u(r,\beta_R))
 \le E_{\beta_R}(r_0)
 +\sup_{|s|\le |u(r_0,\beta_R)|}|F(s)|.
\end{align*}
By \eqref{eq:sub-local-C1}, $E_{\beta_R}(r_0)\rightarrow E_{\alpha_k}(r_0)$ and $u(r_0,\beta_R)\rightarrow u_k(r_0)$ as $R\uparrow S_k$.
Therefore,
\begin{equation}
\limsup_{R\uparrow S_k}
\sup_{r_0\leq r\leq R}
u_r^2(r,\beta_R)
\leq
2E_{\alpha_k}(r_0)
+2\sup_{|s|\le |u_k(r_0)|}|F(s)|,
\label{eq:tail-derivative-limsup}
\end{equation}
while \eqref{eq:tail-u-control} gives
\begin{equation}
\limsup_{R\uparrow S_k}
\sup_{r_0\leq r\leq R}
|u(r,\beta_R)|
\leq |u_k(r_0)|.
\label{eq:tail-function-limsup}
\end{equation}
The same energy estimate applies to $u_k$ on $[r_0,S_k]$.
Since $u_k(r)\rightarrow0$ and $E_{\alpha_k}(r)\rightarrow0$ as $r\uparrow S_k$, the right-hand sides of \eqref{eq:tail-derivative-limsup} and \eqref{eq:tail-function-limsup} can be made arbitrarily small by choosing $r_0$ sufficiently close to $S_k$. Since $U_{k,R}(R)=0$, its zero extension
$\widetilde U_{k,R}$ is continuous and piecewise $C^1$ on
$[0,S_k]$, and hence is Lipschitz.
Therefore,
$\widetilde U_{k,R}\in W^{1,\infty}(0,S_k)$. We now prove the convergence.
Choose $r_0<S_k$ sufficiently close
to $S_k$ so that the corresponding tail bounds \eqref{eq:tail-derivative-limsup} and \eqref{eq:tail-function-limsup} for $u(\cdot,\beta_R)$ and $u_k$ are arbitrarily small. The local convergence \eqref{eq:sub-local-C1} controls the interval $[0,r_0]$,
while the preceding tail estimates control the differences of the
functions and their derivatives on $[r_0,R]$. On $[R,S_k]$, since $\widetilde U_{k,R}=\widetilde U_{k,R}'=0$ almost everywhere,
only the corresponding tail bounds for $u_k$ are needed. Combining
these estimates and then letting $R\uparrow S_k$ yields
{$\|\widetilde U_{k,R}-u_k\|_{W^{1,\infty}(0,S_k)}\rightarrow0$},
which proves \eqref{eq:sub-W1infty}.
\end{proof}

\begin{lemma}\label{lem:sub-free-rate}
Let $u_k=u_k(\cdot,\alpha_k)$ and $\sigma_k=(-1)^k$. On the terminal nodal
domain, put $w=\sigma_k u_k$. Then $w>0$, $w'<0$, and the three
asymptotic formulas \eqref{eq:main-free-u}--\eqref{eq:main-free-ddu}
hold as $r\uparrow S_k$.
\end{lemma}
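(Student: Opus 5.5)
On the terminal nodal interval the signs $w>0$ and $w'<0$ near $S_k$ come from the choice $\sigma_k=(-1)^k$ and from the monotone approach to the double zero recorded in the proof of Lemma~\ref{lem:sublinear-state-convergence}. The terminal interval contains a unique critical point, and after it $\sigma_k u_k$ decreases to $0$. By oddness of $f_{\rm sub}$, $w$ solves
\[
 w''+\frac{n-1}{r}w'+w-w^{q-1}=0,\qquad w(S_k)=w'(S_k)=0 .
\]
The plan is to extract the leading balance $w''\approx w^{q-1}$ near $S_k$. The terms $w$ and $\frac{n-1}{r}w'$ should be lower order.

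First I establish the energy rate. Since $w\to0$ and $w'\to0$, the energy $E=\frac12w'^2+F(w)$ tends to $0$. From $E'=-\frac{n-1}{r}w'^2$ we get
\[
 E(r)=\int_r^{S_k}\frac{n-1}{s}w'^2\,ds .
\]
Next I show $w'^2=\frac2q w^q(1+o(1))$. The upper bound $\frac12 w'^2\le -F(w)+E(r)$ is immediate once $E$ is controlled. To bound $E$, I use that $|w'|$ is decreasing near $S_k$, because $w''=w^{q-1}-w-\frac{n-1}{r}w'>0$ there. This gives $E(r)\le C(S_k-r)w'(r)^2$, which is the same trick used in Lemma~\ref{lem:log-global}. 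Hence
\[
 \tfrac12w'^2\bigl(1-2C(S_k-r)\bigr)\le -F(w)\le \tfrac12 w'^2 ,
\]
and the last inequality uses $E\ge0$. Since $-F(w)=\frac1q w^q(1-\frac q2w^{2-q})$, this yields $-w'=\sqrt{2/q}\,w^{q/2}(1+o(1))$.

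Then I integrate the separable relation. From
\[
 \frac{d}{dr}\,w^{(2-q)/2}=\frac{2-q}{2}\,w^{-q/2}w'=-\frac{2-q}{2}\sqrt{\tfrac2q}\,(1+o(1)),
\]
integrating from $r$ to $S_k$ gives
\[
 w^{(2-q)/2}=\frac{2-q}{\sqrt{2q}}\,(S_k-r)(1+o(1)).
\]
This is \eqref{eq:main-free-u} with $C_q^{(2-q)/2}=(2-q)/\sqrt{2q}$, which is consistent with the definition of $C_q$. Substituting back gives $-w'=\sqrt{2/q}\,C_q^{q/2}(S_k-r)^{q/(2-q)}(1+o(1))$. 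One then checks the identity $\sqrt{2/q}\,C_q^{q/2}=\frac{2C_q}{2-q}$, which is equivalent to $C_q^{(2-q)/2}=(2-q)/\sqrt{2q}$. This proves \eqref{eq:main-free-du}.

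For \eqref{eq:main-free-ddu}, I read the second derivative off the equation: $w''=w^{q-1}-w-\frac{n-1}{r}w'$. Here $w=O\bigl((S_k-r)^{2/(2-q)}\bigr)$ and $w'=O\bigl((S_k-r)^{q/(2-q)}\bigr)$. Both exponents exceed $\frac{2(q-1)}{2-q}$, because $2>2q-2$ and $q>2q-2$ when $q<2$. Hence $w''=w^{q-1}(1+o(1))=C_q^{q-1}(S_k-r)^{2(q-1)/(2-q)}(1+o(1))$.

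The main obstacle is the second step, namely the lower bound showing that the energy dissipation term is negligible relative to $w'^2$. The monotonicity of $|w'|$ is exactly what makes it work, so I would verify carefully that $w''>0$ on a left neighbourhood of $S_k$. This holds because $w^{q-1}$ dominates $w$ as $w\to0$, and the friction term $-\frac{n-1}{r}w'$ is positive.
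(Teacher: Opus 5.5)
Your proof is correct. It differs from the paper's at the one nontrivial step, namely controlling the dissipated energy $E(r)=\int_r^{S_k}\frac{n-1}{s}w'^2\,ds$.

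The paper switches to $w$ as the independent variable and writes $dE/dw=\frac{n-1}{r(w)}\sqrt{2(E-F(w))}$. Integrating from $0$ to $w$ and using the monotonicity of $E$ gives $E\le Cw\sqrt{E+w^q}$. Solving this quadratic inequality yields $E=O(w^2+w^{1+q/2})=o(w^q)$, so the paper measures $E$ against the potential scale $w^q$.

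You stay in the variable $r$ and instead measure $E$ against the kinetic term. On the final monotone stretch you use the convexity $w''=w^{q-1}-w-\frac{n-1}{r}w'>0$, which holds since $0<w<1$, $w'<0$ and $q<2$. This makes $|w'|$ decreasing, hence $E(r)\le C(S_k-r)w'(r)^2$, which is the device already used in Lemma~\ref{lem:log-global}. Together with $E\ge0$, this gives $\tfrac12 w'^2=-F(w)\bigl(1+O(S_k-r)\bigr)$ with no change of variables and no quadratic inequality.

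What each route costs:
\begin{itemize}
\item Your route needs the extra convexity observation.
\item The paper's route needs only that $w$ and $E$ are monotone, and it makes the role of $q<2$ visible through the exponent comparison $1+q/2>q$.
\end{itemize}
A posteriori the two relative errors coincide, since $w^{1-q/2}\asymp S_k-r$ by \eqref{eq:main-free-u}.

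The remaining steps match the paper: integrating $\frac{d}{dr}w^{(2-q)/2}$, checking $C_q^{(2-q)/2}=(2-q)/\sqrt{2q}$, and reading $w''$ off the equation.
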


\begin{proof}
{The profile asymptotic in \eqref{eq:main-free-u} and the equivalent
first-derivative ratio are already contained in
\cite[Proposition~2.5(iv)]{ZhangZhang}.  For completeness, and to obtain the
second-derivative rate in the same notation, we give a short energy
derivation of all three formulas.}
On the terminal nodal interval,
\[
 w''+\frac{n-1}{r}w'+w-w^{q-1}=0,
 \qquad w(S_k)=w'(S_k)=0.
\]
Its energy
\[
 E(r)=\frac12w'^2(r)+\frac12w^2(r)-\frac1q w^q(r)
\]
{is strictly positive for $r<S_k$, tends to zero at $S_k$, and satisfies
$E'=-(n-1)r^{-1}w'^2$.  Indeed, $E$ is nonincreasing in $r$ and
$E(S_k)=0$; equality at an earlier point would force $w'\equiv0$ up to the
free boundary and hence contradict $w>0$ on the terminal nodal interval.} Since $w$ is strictly decreasing, it may be
used as a variable near the free boundary. Writing $r=r(w)$ gives
\begin{equation*}
 \frac{dE}{dw}=\frac{n-1}{r(w)}
 \sqrt{2\left(E-\frac12w^2+\frac1q w^q\right)}.
\end{equation*}
After restricting to $r\ge S_k/2$, integration from $0$ to $w$ and
monotonicity of $E$ imply
\[
 E(w)\le Cw\sqrt{E(w)+w^q}.
\]
Solving this quadratic inequality for $\sqrt{E(w)}$ yields
\[
 E(w)=O\big(w^2+w^{1+\frac{q}{2}}\big)=o(w^q),
\]
because $1<q<2$. Therefore
\begin{equation}\label{eq:free-first-integral}
 -w'(r)=\sqrt{\frac2q}\,w(r)^{\frac{q}{2}}(1+o(1)).
\end{equation}
Integrating \eqref{eq:free-first-integral} up to $S_k$ gives
\[
 w(r)^{\frac{2-q}{2}}
 =\frac{2-q}{\sqrt{2q}}(S_k-r)(1+o(1)),
\]
which is \eqref{eq:main-free-u}; substitution back into
\eqref{eq:free-first-integral} gives \eqref{eq:main-free-du}. Finally,
the differential equation gives
\[
 w''=w^{q-1}-w-\frac{n-1}{r}w'=w^{q-1}(1+o(1)),
\]
since $|w'|=O(w^{q/2})=o(w^{q-1})$ and $w=o(w^{q-1})$. This proves
\eqref{eq:main-free-ddu}.
\end{proof}

\subsection{The maximal sublinear simple-zero branch}
We now combine the results established in the preceding subsections to prove Theorem~\ref{thm:sub-main}.
\begin{proof}[Proof of Theorem~\ref{thm:sub-main}]
{
By Lemmas~\ref{lem:sublinear-zero-curve}--\ref{lem:sublinear-bessel-limit},
\[
 z_{k+1}:(\alpha_k,\infty)\longrightarrow
 (\rho_{k+1},S_k)
\]
is a strictly decreasing $C^1$ bijection.  In particular,
$S_k>\rho_{k+1}$.  For every
$R\in(\rho_{k+1},S_k)$, there is therefore a unique
$\beta_k(R)>\alpha_k$ such that
\[
 z_{k+1}(\beta_k(R))=R.
\]
Set $U_{k,R}(r):=u(r,\beta_k(R))$ for $0\le r\le R$.
Then $U_{k,R}$ solves \eqref{eq:main-sub-ball}, is positive at the
origin, and has exactly $k$ simple zeros in $(0,R)$.  Its boundary
zero is also simple.  Indeed, if
$U_{k,R}(R)=U_{k,R}'(R)=0$, its zero extension beyond $R$ would be a
compactly supported $k$-node whole-space {bound state}.  The uniqueness
theorem of Zhang--Zhang \cite{ZhangZhang} would then force
$R=S_k$, contrary to $R<S_k$.

We next prove both uniqueness and the necessity of the radius range.
Let $U$ be any radial solution in the simple-zero class stated in the
theorem.  Since a nontrivial regular radial solution cannot have
$U(0)=U'(0)=0$, we have $U(0)\ne0$; {because the equation is invariant under $U\mapsto -U$}, we may assume
$\alpha:=U(0)>0$.  The nonlinear Cauchy problem is unique up to the
first double zero.  Because every zero of $U$ on $[0,R]$ is simple,
this uniqueness can be continued successively across the nodal set,
and hence
\[
 U(r)=u(r,\alpha),\qquad 0\le r\le R.
\]
The point $R$ is the $(k+1)$-st simple zero of this shooting orbit.
The sublinear shooting classification therefore gives
$\alpha>\alpha_k$ and
\[
 R=z_{k+1}(\alpha)\in(\rho_{k+1},S_k).
\]
Thus no {solution} in this class exists outside the asserted radius
interval.  For $R$ inside the interval, injectivity of $z_{k+1}$ gives
$\alpha=\beta_k(R)$, so $U=U_{k,R}$.  {Restoring the possible sign change proves uniqueness up to sign.}

The inverse-function theorem {applied to the strictly transversal zero
curve shows that $\beta_k\in C^1$ and} gives
\[
 \beta_k'(R)=\frac{1}{z_{k+1}'(\beta_k(R))}
 =-\frac{u_r(R,\beta_k(R))}{v(R,\beta_k(R))}<0.
\]
Inverting the endpoint limits of $z_{k+1}$ in \eqref{eq:sublinear-left-limit} and \eqref{eq:sublinear-right-limit-general} yields
\[
 \beta_k(R)\downarrow\alpha_k\quad(R\uparrow S_k),
 \qquad
 \beta_k(R)\to\infty\quad(R\downarrow\rho_{k+1}).
\]
Moreover, \eqref{eq:sublinear-first-order-zero}, with
$\alpha=\beta_k(R)$ and $z_{k+1}(\alpha)=R$, gives
\[
 (R-\rho_{k+1})\beta_k(R)^{2-q}
 \longrightarrow\mathfrak c_{k+1,q,n}
 \quad(R\downarrow\rho_{k+1}).
\]
This proves \eqref{eq:main-sub-limits}.  Finally,
\eqref{eq:main-sub-convergence} follows from
Lemma~\ref{lem:sublinear-state-convergence}, while
\eqref{eq:main-free-u}--\eqref{eq:main-free-ddu} follow from
Lemma~\ref{lem:sub-free-rate}.
}
\end{proof}

\section{Unified parameter branches on the unit ball}
\label{sec:parameter-branches}

The two radius-dependent families become branches of the single parameter
problem \eqref{eq:unit-parameter} after the change of variables
$x=Ry$ and $\lambda=R^2$. {Although this rescaling is elementary, the
resulting formulation makes the no-fold global topology explicit while
keeping the all-radius logarithmic branch distinct from the maximal sublinear
simple-zero branch.}

\begin{proof}[Proof of Theorem~\ref{thm:branch-main}]
The map $R\mapsto\lambda=R^2$ is an increasing $C^1$ diffeomorphism of
$(0,\infty)$. Theorem~\ref{thm:log-main} therefore gives the logarithmic
domain, range, endpoint expansions, and convergence after replacing $R$ by
$\sqrt\lambda$. Likewise, Theorem~\ref{thm:sub-main} maps
$(\rho_{k+1},S_k)$ onto $(\rho_{k+1}^2,S_k^2)$. Since
\[
 \lambda-\rho_{k+1}^2
 =(\sqrt\lambda-\rho_{k+1})(\sqrt\lambda+\rho_{k+1}),
\]
the first sublinear endpoint law acquires the factor $2\rho_{k+1}$.
The chain rule gives \eqref{eq:branch-derivative}. {Thus the central
height is a valid global coordinate on each stated parameter interval, so no
fold occurs in the shooting-height parametrization.} The limiting double zero
and its profile are exactly Lemma~\ref{lem:sub-free-rate}; no further
continuation is used. This proves the theorem.
\end{proof}

\section{Radial and angular spectra}
\label{sec:proof-morse}
{Sections~\ref{sec:proof-log} and~\ref{sec:proof-sub} established the
two finite-ball classifications, while Section~\ref{sec:parameter-branches}
recast them as monotone unit-ball parameter branches.  We now turn to the
spectral conclusions.  Differentiation with respect to the shooting height
produces the regular zero-energy solution of the radial linearized equation,
and differentiation of the zero-curve identity fixes its boundary sign.  A
singular radial Sturm theorem then gives radial nondegeneracy and the radial
Morse index.  Finally, the degree-one translation mode $U'$ and
spherical-harmonic decomposition provide the angular counts and the full
Morse-index formula.}
\subsection{A zero-curve Sturm principle}
We first formulate the abstract shooting principle used for all three
models. Consider the regular radial problem \eqref{eq:intro-shooting-unified},
let $U(r):=u(r,\alpha)$
be a stopped shooting orbit whose boundary value satisfies $U(R)=0$.
The radial linearized operator at $U$ is
\[
L_U:=-\Delta-f'(U).
\]
{When $L_U$ is applied below to a function that does not satisfy the
Dirichlet boundary condition, it denotes the associated differential expression.}
Write
\begin{equation*}
        a(r):=f'(U(r)).
\end{equation*}
Its radial quadratic form is
\begin{equation}
\label{eq:radial-quadratic-form}
\mathcal Q_U(\phi)
:=
\int_0^R
\left(
|\phi'(r)|^2-a(r)\phi^2(r)
\right)r^{n-1}\,dr,
\qquad
\phi\in H^1_{0,\mathrm{rad}}(B_R).
\end{equation}
For the power nonlinearity, the coefficient $a$ is continuous. In the
logarithmic and sublinear cases, it is singular at the nodal radii of
$U$, but these singularities are locally integrable. As the stopped profile has
only finitely many simple zeros in $(0,R)$, it follows that $a\in L^1(0,R)$.
{Moreover, $a$ is bounded in a neighborhood of $r=0$, because $U(0)>0$ and $f'$ is continuous near $U(0)$.}
Accordingly, \eqref{eq:radial-quadratic-form} is understood in the
usual sense in the power case and as the corresponding closed
lower-semibounded radial form in the logarithmic and sublinear cases.
The radial eigenvalue equation $L_U\phi=\lambda\phi$ takes the form
\begin{equation}\label{eq:eigen-lambda}
        \phi''+\frac{n-1}{r}\phi'+(a(r)+\lambda)\phi=0,
\end{equation}
with the regularity and Dirichlet conditions
\begin{equation*}
\phi'(0)=0,
\qquad
\phi(R)=0.
\end{equation*}
At the singular nodal radii, \eqref{eq:eigen-lambda} is understood in
the integral sense. The form domain throughout this section is
$H^1_{0,\mathrm{rad}}(B_R)$.
In one-dimensional radial coordinates, its smooth radial core consists of
functions satisfying the regular condition at the origin and the Dirichlet
condition at $R$.  {Rather than introducing an auxiliary boundary
condition at $r=\varepsilon$, we will preserve the regular origin condition
and approximate only the singular potential by bounded coefficients.}
We next record the radial Sturm count in the precise form needed to
relate the zeros of the shooting variation to the radial kernel and the Morse index of $U$.

\begin{lemma}\label{lem:singular-form}
Let $a=a_+-a_-$ on $(0,R)$, where $a_+\in L^\infty(0,R)$ and
$a_-\geq0$.  Assume that $a_-$ is bounded near $r=0$ and outside
neighborhoods of finitely many points in $(0,R]$.  Each singularity,
including a possible singularity at $r=R$, is assumed to belong to
$L^1$ on a one-sided neighborhood.  Then
\[
 \phi\longmapsto\int_0^R a_-(r)\phi^2(r)r^{n-1}\,dr
\]
is infinitesimally form bounded with respect to the radial Dirichlet
energy.  Consequently, the form in
\eqref{eq:radial-quadratic-form}, with domain
$H^1_{0,{\rm rad}}(B_R)$, is densely defined, closed, and bounded from
below.  Its self-adjoint realization has compact resolvent.
\end{lemma}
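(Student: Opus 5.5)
The plan is to reduce infinitesimal form boundedness to a one-dimensional Sobolev estimate localized near each singular point, and then to invoke the KLMN theorem. The weight $r^{n-1}$ is bounded above and below near each singular point $r_i\in(0,R]$, because these points stay away from the origin. Near $r=0$ the function $a_-$ is bounded by hypothesis. We therefore split $(0,R)$ into three kinds of pieces:
\begin{itemize}
\item a neighborhood $[0,\delta_0]$ of the origin;
\item small one-sided or two-sided neighborhoods $J_i$ of the finitely many singular points $r_i$;
\item a remainder on which $a_-$ is bounded.
\end{itemize}
On the first and third kinds of piece the contribution is at most $C\|\phi\|_{L^2(r^{n-1}dr)}^2$. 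This is trivially relatively bounded with bound zero.

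The key estimate is on each $J_i\subset[r_i-\delta,r_i+\delta]\cap(0,R]$, where $r\ge c>0$. For $\phi\in H^1_{0,\rm rad}(B_R)$, the restriction to $[c,R]$ lies in $H^1(c,R)$. The one-dimensional embedding gives, for every $\eta>0$,
\[
 \sup_{J_i}|\phi|^2\le \eta\int_c^R|\phi'|^2dr+C_\eta\int_c^R\phi^2dr .
\]
This follows from $\phi(s)^2\le\phi(t)^2+2\int|\phi\phi'|$, averaging in $t$, and Young's inequality. Multiplying by $\int_{J_i}a_-\,r^{n-1}dr$, which is finite because $a_-\in L^1(J_i)$, and comparing the weights $dr$ and $r^{n-1}dr$ on $[c,R]$, we obtain
\[
 \int_{J_i}a_-\phi^2r^{n-1}dr\le \varepsilon\int_0^R|\phi'|^2r^{n-1}dr+C_\varepsilon\int_0^R\phi^2r^{n-1}dr
\]
for every $\varepsilon>0$. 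We choose $\eta$ so that $\eta\|a_-\|_{L^1(J_i)}\cdot\max_{[c,R]}r^{n-1}/c^{n-1}\le\varepsilon/N$, where $N$ is the number of singular points. Summing over the pieces proves infinitesimal form boundedness. A possible singularity at $r=R$ is handled in the same way on the one-sided interval. The Dirichlet condition is not even needed for this step.

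The remaining conclusions are standard. The radial Dirichlet form $\int|\phi'|^2r^{n-1}$ on $H^1_{0,\rm rad}(B_R)$ is closed and nonnegative, and its associated operator is the radial Dirichlet Laplacian. The term $-\int a_+\phi^2r^{n-1}$ is bounded on $L^2$. By the KLMN theorem, the perturbed form $\mathcal Q_U$ with the same domain is closed and bounded from below, and it is densely defined because $C_c^\infty$-type radial functions are dense. For compactness of the resolvent, infinitesimal boundedness gives
\[
 \mathcal Q_U(\phi)+C\|\phi\|^2\ge\tfrac12\|\nabla\phi\|^2 .
\]
Hence the form norm is equivalent to the $H^1_{0,\rm rad}(B_R)$ norm. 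Rellich compactness of $H^1_0(B_R)\hookrightarrow L^2(B_R)$, restricted to radial functions, then yields a compact form-domain embedding. Therefore the self-adjoint operator has compact resolvent.

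The only delicate point is the uniform one-dimensional sup-norm bound near an interior singular radius with arbitrarily small constant. This is exactly where the hypothesis that the singularities sit away from $r=0$ and are $L^1$ matters, since local integrability alone suffices in one dimension. Everything else is bookkeeping.
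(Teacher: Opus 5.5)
Your proposal is correct and follows essentially the same route as the paper. It localizes to disjoint neighborhoods of the finitely many singular radii, where $r^{n-1}$ is comparable to $1$, and combines the one-dimensional bound $\|\phi\|_{L^\infty}^2\le\eta\|\phi'\|_{L^2}^2+C_\eta\|\phi\|_{L^2}^2$ with $a_-\in L^1$. It treats the origin and the remainder as bounded pieces, then concludes via the form perturbation (KLMN) theorem and compactness of the radial embedding.

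Your write-up is somewhat more explicit than the paper's: it gives the averaging proof of the sup bound, the choice of $\eta$, and the norm equivalence behind the compact resolvent. You could also note that $a_-$ enters $\mathcal Q_U$ with a plus sign, so the lower bound $\mathcal Q_U(\phi)+\|a_+\|_\infty\|\phi\|^2\ge\int_0^R|\phi'|^2r^{n-1}\,dr$ is immediate.
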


\begin{proof}
Choose pairwise disjoint intervals $I_1,\ldots,I_m$ around the
singular points.  If the last singular point is $R$, take
$I_m=(R-\delta,R)$.  On every $I_j$, the weight $r^{n-1}$ is bounded
above and below by positive constants.  For every $\eta>0$, the
one-dimensional inequality
\[
 \|\phi\|_{L^\infty(I_j)}^2
 \leq \eta\|\phi'\|_{L^2(I_j)}^2
 +C_{\eta,j}\|\phi\|_{L^2(I_j)}^2
\]
holds also on the one-sided interval ending at $R$; the Dirichlet trace
$\phi(R)=0$ causes no difficulty.  Since $a_-\in L^1(I_j)$, choose
$\eta$ so that the coefficient of the derivative term below is less
than a prescribed $\varepsilon>0$.  Equivalence of the weighted and
unweighted norms on $I_j$ then gives
\[
 \int_{I_j}a_-\phi^2r^{n-1}\,dr
 \leq \varepsilon\int_{I_j}|\phi'|^2r^{n-1}\,dr
 +C_{\varepsilon,j}\int_{I_j}\phi^2r^{n-1}\,dr.
\]
On the complement of these intervals the potential is bounded; it is
also bounded near the origin by assumption.  Summing the estimates
yields the asserted infinitesimal form bound.  Since the negative
part of the form is bounded by
 $\|a_+\|_\infty\|\phi\|_{L^2((0,R),r^{n-1}dr)}^2$, the preceding
 infinitesimal bound and the form perturbation theorem give closedness
 and lower boundedness.
Finally, compactness of the radial embedding
$H^1_{0,{\rm rad}}(B_R)\hookrightarrow L^2((0,R),r^{n-1}dr)$ gives
compact resolvent.
\end{proof}

\begin{lemma}\label{lem:radial-sturm}
{
Let $a=a_+-a_-\in L^1(0,R)$ satisfy the hypotheses of
Lemma~\ref{lem:singular-form}, and let $\mathcal L$ be the self-adjoint
operator in $L^2((0,R),r^{n-1}\,dr)$ associated with the resulting
closed, lower-bounded form on the regular radial Dirichlet domain. Its
differential expression is
\[
 {\tau y}:=-y''-\frac{n-1}{r}y'-a(r)y,
 \qquad y'(0)=0,
 \qquad y(R)=0.
\]
Let $y_0$ be the unique regular integral solution of
\begin{equation*}
 {\tau y_0=0},
 \qquad y_0(0)=1,
 \qquad y_0'(0)=0.
\end{equation*}
Assume that $y_0(R)\ne0$ and that $y_0$ has exactly $N$ zeros in
$(0,R)$.  Then $\mathcal L$ has exactly $N$ negative eigenvalues,
counted with multiplicity, and $\ker\mathcal L=\{0\}$.
}
\end{lemma}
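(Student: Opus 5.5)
The plan is to approximate the singular potential by bounded ones, apply the classical regular Sturm theory to the approximants, and pass to the limit using form convergence for the spectrum and Lemma~\ref{lem:log-L1} for the zero-energy solution. Concretely, set $a_m:=a_+-\min\{a_-,m\}$. Then $a_m\in L^\infty(0,R)$, the functions $a_-$ and $\min\{a_-,m\}$ are ordered, and $a_m\to a$ in $L^1(0,R)$ by monotone convergence.

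\textbf{Step 1: the bounded approximants.} For each $m$, the operator $\mathcal L_m$ with expression $-y''-\frac{n-1}{r}y'-a_my$ on the regular radial Dirichlet domain is a regular-at-$R$, limit-circle-regular-at-$0$ Sturm--Liouville problem. The classical oscillation theorem applies: if $\lambda_1^m<\lambda_2^m<\cdots$ are its simple eigenvalues, the eigenfunction for $\lambda_j^m$ has $j-1$ interior zeros. By Sturm comparison in $\lambda$, the number of negative eigenvalues equals the number of zeros in $(0,R)$ of the regular zero-energy solution $y_m$, provided $y_m(R)\neq0$. I would prove the comparison via the Prüfer angle $\theta(r,\lambda)$, which is strictly decreasing in $\lambda$ at $r=R$.

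\textbf{Step 2: convergence of the zero-energy solutions.} By Lemma~\ref{lem:log-L1}, $y_m\to y_0$ in $C^1([0,R])$. The zeros of $y_0$ are simple (a double zero would force $y_0\equiv0$ by the uniqueness part of that lemma), and $y_0(R)\ne0$. The simple-zero stability used in Lemma~\ref{lem:log-C1} then gives, for large $m$, that $y_m(R)\neq0$ and that $y_m$ has exactly $N$ zeros in $(0,R)$. Hence $\mathcal L_m$ has exactly $N$ negative eigenvalues and $0\notin\sigma(\mathcal L_m)$.

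\textbf{Step 3: passing the count to the limit.} This is the main obstacle. The forms $\mathcal Q_m$ decrease monotonically to $\mathcal Q$ on the common domain $H^1_{0,\rm rad}$. The infinitesimal form bound of Lemma~\ref{lem:singular-form} holds uniformly in $m$, since $\min\{a_-,m\}\le a_-$. Together with $a_m\to a$ in $L^1$, this gives
\[
|\mathcal Q_m(\phi)-\mathcal Q(\phi)|\le \varepsilon_m\bigl(\|\phi'\|^2+\|\phi\|^2\bigr),\qquad \varepsilon_m\to0.
\]
To get $\varepsilon_m\to0$, I would split each singular interval into the region where $a_->m$, whose $L^1$ mass tends to $0$, and use the one-dimensional $L^\infty$ bound on $\phi$ there. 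This estimate is norm-resolvent convergence in the form sense, so by the min-max principle $\lambda_j^m\to\lambda_j$ for each $j$.

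\textbf{Step 4: conclusion.} It remains to show $\lambda_j\neq0$ for all $j$, so that the limits of the signs persist and the number of negative $\lambda_j$ equals $N$. Suppose $\mathcal L\phi=0$ with $\phi\neq0$ in the operator domain. Then $\phi$ is a regular integral solution of $\tau\phi=0$; the regularity at $0$ comes from the bounded potential near the origin. By uniqueness in Lemma~\ref{lem:log-L1}, $\phi=cy_0$. Then $\phi(R)=0$ contradicts $y_0(R)\ne0$, so $\ker\mathcal L=\{0\}$.

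Now take $\lambda_{N+1}$. If it were negative, then $\lambda_{N+1}^m<0$ for large $m$, contradicting the count in Step~2. Conversely, $\lambda_N=\lim\lambda_N^m\le0$, and since $0$ is not an eigenvalue, $\lambda_N<0$. Multiplicities are all one, because a regular solution is unique up to scaling. Hence $\mathcal L$ has exactly $N$ negative eigenvalues.
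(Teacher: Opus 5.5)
Your proposal is correct and follows the paper's proof essentially step for step: you truncate the negative part of the potential to get bounded coefficients, apply the classical radial Sturm count to the approximants, and transfer the zero count via $C^1$ convergence $y_m\to y_0$ (Lemma~\ref{lem:log-L1}) together with simple-zero stability. You then pass the eigenvalue count to the limit by form-norm convergence and min--max, and use $\ker\mathcal L=\{0\}$ to prevent an eigenvalue from crossing zero. There are two harmless slips: since $a_m\ge a$, the forms $\mathcal Q_m$ increase (not decrease) to $\mathcal Q$, which does not matter because you only use the two-sided bound; and the origin is limit-circle only for $n\le 3$ (limit-point for $n\ge 4$), though in either case the form realization selects the regular solution, so the oscillation count is unaffected.
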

\begin{proof}
{We approximate the singular coefficient by bounded ones, which avoids
introducing an artificial boundary condition near the origin.  Since
$a=a_+-a_-$ and $a_+\in L^\infty(0,R)$, define
\[
 a_m(r):=\max\{a(r),-m\}.
\]
For all sufficiently large $m$, $a_m=a$ in a fixed neighborhood of the
origin, while $a_m\in L^\infty(0,R)$ and
\[
 a_m\downarrow a,\qquad \|a_m-a\|_{L^1(0,R)}\longrightarrow0.
\]
Let $\mathcal L_m$ be the regular radial Dirichlet realization with
coefficient $a_m$, and let $y_m$ be the regular solution of
\[
 -y_m''-\frac{n-1}{r}y_m'-a_m(r)y_m=0,
 \qquad y_m(0)=1,\qquad y_m'(0)=0.
\]
For bounded $a_m$, the classical radial Sturm oscillation theorem gives that
the number of negative eigenvalues of $\mathcal L_m$ equals the number of
zeros of $y_m$ in $(0,R)$, provided $y_m(R)\ne0$; see, for example,
\cite{Titchmarsh}.

The $L^1$ continuous-dependence argument of Lemma~\ref{lem:log-L1}, applied
to the present linear equations, gives
\[
 y_m\longrightarrow y_0\qquad\text{in }C^1([0,R]).
\]
Every zero of $y_0$ in $(0,R)$ is simple, because a double zero would imply
$y_0\equiv0$ by linear Cauchy uniqueness.  Since $y_0(R)\ne0$, simple-zero
stability shows that, for all sufficiently large $m$, $y_m$ has exactly the
same $N$ zeros in $(0,R)$ and also satisfies $y_m(R)\ne0$.  Hence
$\mathcal L_m$ has exactly $N$ negative eigenvalues.

We next compare $\mathcal L_m$ with $\mathcal L$.  Let
\[
 \mathfrak q_m[\phi]
 :=\int_0^R\bigl(|\phi'|^2-a_m\phi^2\bigr)r^{n-1}\,dr,
 \qquad
 \mathfrak q[\phi]
 :=\int_0^R\bigl(|\phi'|^2-a\phi^2\bigr)r^{n-1}\,dr
\]
on the common form domain $H^1_{0,\mathrm{rad}}(B_R)$.  By
Lemma~\ref{lem:singular-form}, these forms are closed and have a common lower
bound.  Moreover,
\[
 0\le \mathfrak q[\phi]-\mathfrak q_m[\phi]
 =\int_0^R(a_m-a)\phi^2r^{n-1}\,dr.
\]
The difference $a_m-a$ is supported, for large $m$, only in fixed
neighborhoods of the finitely many singular points away from the origin.
On each such interval the weighted and unweighted $H^1$ norms are
equivalent, and the one-dimensional estimate
\[
 \|\phi\|_{L^\infty(I)}^2
 \le C_I\bigl(\|\phi'\|_{L^2(I)}^2+\|\phi\|_{L^2(I)}^2\bigr)
\]
therefore yields
\[
 0\le \mathfrak q[\phi]-\mathfrak q_m[\phi]
 \le \varepsilon_m
 \left(
 \int_0^R|\phi'|^2r^{n-1}\,dr+
 \int_0^R\phi^2r^{n-1}\,dr
 \right),
 \qquad \varepsilon_m\to0.
\]
{Since
$\mathfrak q[\phi]+(\|a_+\|_\infty+1)\|\phi\|_2^2$ controls the radial
$H^1$ norm, the preceding estimate is precisely form-norm convergence
$\mathfrak q_m\to\mathfrak q$.}  Since the radial form embedding is compact,
the min--max principle gives convergence of every ordered eigenvalue of
$\mathcal L_m$ to the corresponding eigenvalue of $\mathcal L$.

Finally, {standard one-dimensional regularity for the weak eigenvalue
equation with $L^1$ coefficient shows that} any element of
$\ker\mathcal L$ is a regular integral solution of the zero-energy equation.
Volterra uniqueness makes it a scalar multiple of $y_0$, and the Dirichlet
condition together with $y_0(R)\ne0$ forces that scalar to vanish.  Hence $\ker\mathcal L=\{0\}$.  Zero is therefore separated
from the discrete spectrum of $\mathcal L$, and the eigenvalue convergence
implies that $\mathcal L$ has the same number $N$ of negative eigenvalues as
$\mathcal L_m$ for all sufficiently large $m$.}
\end{proof}

\begin{proposition}\label{prop:zero-curve-principle}
{
Let $I_k=(a_k,\infty)$ and let $J_k\subset(0,\infty)$ be an open
interval.  Assume that:
\begin{enumerate}
\item {the $(k+1)$-st positive simple zero exists if and
only if $\alpha\in I_k$}; for every $\alpha\in I_k$, the first
$k+1$ positive zeros satisfy
$0<z_1(\alpha)<\cdots<z_{k+1}(\alpha)$ and are simple;
\item $z_{k+1}:I_k\to J_k$ is a $C^1$ bijection;
\item with
\[
 v(\cdot,\alpha):=\partial_\alpha u(\cdot,\alpha),
\]
the variation $v$ has exactly one zero in every interval
$(z_{j-1}(\alpha),z_j(\alpha))$, $1\le j\le k+1$, where
$z_0(\alpha):=0$, and
\begin{equation}\label{eq:positive}
 u_r(z_j(\alpha),\alpha)v(z_j(\alpha),\alpha)>0,
 \qquad 1\le j\le k+1;
\end{equation}
\item the nonlinear shooting problem and the regular linearized Cauchy
problem are unique up to the boundary zero, the latter in the integral
sense when its coefficient is singular;
\item {for every stopped profile, the coefficient
$a=f'(U)$ admits a decomposition $a=a_+-a_-$ satisfying the hypotheses of
Lemma~\ref{lem:singular-form}; in particular,
$a\in L^1(0,z_{k+1}(\alpha))$, $a$ is bounded near $r=0$, and the associated
radial quadratic form is closed and bounded from below.}
\end{enumerate}
Then, for every $R\in J_k$, there exists a unique regular radial
Dirichlet profile $U_{k,R}$ of \eqref{eq:unified-ball} that is positive
at the origin, has exactly $k$ simple zeros in $(0,R)$, and has a
simple boundary zero.  Moreover,
\[
 \ker_{\rm rad}L_{U_{k,R}}=\{0\},
 \qquad
 m_{\rm rad}(U_{k,R})=k+1.
\]
}
\end{proposition}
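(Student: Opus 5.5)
The plan is to split the statement into an existence/uniqueness part, which is a pure zero-curve inversion argument, and a spectral part, which reduces to Lemma~\ref{lem:radial-sturm} applied to the shooting variation.

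\emph{Existence.} Fix $R\in J_k$. By assumption (2) there is a unique $\beta=z_{k+1}^{-1}(R)\in I_k$. Set $U_{k,R}:=u(\cdot,\beta)|_{[0,R]}$. By (1), $U_{k,R}$ is positive at the origin, has exactly the $k$ simple zeros $z_1(\beta),\dots,z_k(\beta)$ in $(0,R)$, and $R=z_{k+1}(\beta)$ is a simple zero.

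\emph{Uniqueness.} Let $U$ be another such profile with $U(0)=\alpha>0$. By (4), $U=u(\cdot,\alpha)$ on $[0,R]$, with uniqueness continued across each simple zero. Then $R$ is the $(k+1)$-st positive simple zero of $u(\cdot,\alpha)$. Assumption (1) gives $\alpha\in I_k$ and $z_{k+1}(\alpha)=R$, and injectivity in (2) forces $\alpha=\beta$.

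\emph{Spectral part.} Put $a=f'(U_{k,R})$. By (5), $a$ satisfies the hypotheses of Lemma~\ref{lem:singular-form}, so the radial realization is the operator $\mathcal L$ of Lemma~\ref{lem:radial-sturm}. The key identification is that the regular integral solution $y_0$ of $\tau y_0=0$, $y_0(0)=1$, $y_0'(0)=0$, is exactly $v(\cdot,\beta)|_{[0,R]}$. Indeed, $v$ solves the linearized equation, understood in the integral sense across the nodal radii, with $v(0)=\partial_\alpha\alpha=1$ and $v'(0)=0$. The uniqueness part of (4), in the Volterra sense of Lemma~\ref{lem:log-L1}, then gives $y_0=v$.

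The zero count of $v$ on $(0,R)$ comes from (3). There is exactly one zero in each of $(z_{j-1},z_j)$ for $1\le j\le k+1$, and $v(z_j)\neq0$ for $j\le k$ by \eqref{eq:positive}. Hence $v$ has exactly $k+1$ zeros in $(0,R)$, provided there are no others.

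The remaining step is to check that $v$ has no zeros other than these $k+1$ and does not vanish at $R$. This is where the no-extra-zero part of (3) must be read correctly. The open intervals $(z_{j-1},z_j)$, $j=1,\dots,k+1$, together with the points $z_1,\dots,z_k$, cover $(0,R)$ exactly, since $z_{k+1}=R$. Assumption (3) says there is \emph{exactly} one zero in each interval, and $v(z_j)\neq0$ for $1\le j\le k$ by \eqref{eq:positive}. So the total count in $(0,R)$ is $k+1$. At the boundary, \eqref{eq:positive} with $j=k+1$ gives $u_r(R,\beta)v(R,\beta)>0$, hence $v(R)\neq0$.

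Lemma~\ref{lem:radial-sturm} then yields $N=k+1$ negative eigenvalues and $\ker\mathcal L=\{0\}$. That is, $\ker_{\rm rad}L_{U_{k,R}}=\{0\}$ and $m_{\rm rad}(U_{k,R})=k+1$, where the radial Morse index agrees with the negative eigenvalue count by min--max for the closed form.

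I expect the only delicate point to be the identification $y_0=v$ through the singular nodal radii. This rests on the $C^1$ differentiability in $\alpha$ already established via $L^1$ convergence of the coefficients, for example Lemma~\ref{lem:log-C1} in the logarithmic case. It enters here only through hypothesis (4), so no new analysis is needed.
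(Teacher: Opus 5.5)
Your proposal is correct and follows essentially the same route as the paper. You invert the zero curve for existence and uniqueness, identify the regular zero-energy solution $y_0$ with the shooting variation $v$ through the linear Cauchy uniqueness in (4), read off $k+1$ zeros in $(0,R)$ and $v(R)\neq0$ from (3) and \eqref{eq:positive}, and then apply Lemma~\ref{lem:radial-sturm}; your explicit use of \eqref{eq:positive} to exclude zeros of $v$ at the interior nodes $z_1,\dots,z_k$ is a small point the paper leaves implicit.
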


\begin{proof}
{
We divide the proof into two steps.

\medskip
\noindent\emph{Step 1: Existence and uniqueness.}
For $R\in J_k$, let $\beta\in I_k$ be the unique value such that
$z_{k+1}(\beta)=R$, and set
\[
 U_{k,R}:=u(\cdot,\beta)|_{[0,R]}.
\]
Assumption~1 shows that this stopped orbit has exactly $k$ simple
interior zeros and a simple boundary zero. Conversely, any profile in
the stated shooting class has a positive central value $\alpha\in I_k$
and, by nonlinear Cauchy uniqueness, equals $u(\cdot,\alpha)$ up to the
boundary. Hence $z_{k+1}(\alpha)=R$, and the injectivity of $z_{k+1}$
gives $\alpha=\beta$.

\medskip
\noindent\emph{Step 2: Radial nondegeneracy and radial Morse index.}
For each $R\in J_k$, Step~1 yields a unique $\beta_k(R)\in I_k$ such
that
\[
 z_{k+1}(\beta_k(R))=R.
\]
Let $\beta:=\beta_k(R)$, $U(r):=u(r,\beta)$, and $v(r):=\left.\partial_\alpha u(r,\alpha)\right|_{\alpha=\beta}$.
At the boundary, \eqref{eq:positive} with $j=k+1$ gives $U_r(R)v(R)>0$,
and therefore $v(R)\ne0$.

Differentiating the shooting equation
\eqref{eq:intro-shooting-unified} with respect to $\alpha$ shows that
$v$ is the regular integral solution of the zero-energy linearized
problem
\[
 L_Uv=0,
 \qquad v(0)=1,
 \qquad v'(0)=0,
\]
where
\[
 L_U=-\frac{d^2}{dr^2}-\frac{n-1}{r}\frac{d}{dr}-a(r),
 \qquad a(r)=f'(U(r)).
\]
By the linear Cauchy uniqueness in Assumption~4, $v$ is precisely the
regular zero-energy solution $y_0$ appearing in
Lemma~\ref{lem:radial-sturm}. Assumption~3 gives exactly one zero of
$v$ in each of
\[
 (0,z_1),\ (z_1,z_2),\ \ldots,\ (z_k,R),
\]
so $v$ has exactly $k+1$ zeros in $(0,R)$. {Assumption~5 places $a$ under Lemmas~\ref{lem:singular-form}
and~\ref{lem:radial-sturm}.} Hence all the hypotheses of
Lemma~\ref{lem:radial-sturm} hold with $y_0=v$ and $N=k+1$. Consequently,
\[
 \ker_{\rm rad}L_U=\{0\},
 \qquad
 m_{\rm rad}(U)=k+1.
\]
}
\end{proof}

\subsection{The power equation}
Let $f=f_{\rm pow}$. The nonlinear shooting problem and its
linearization have the standard Cauchy theory. {Tang's zero-curve analysis} \cite{Tang}
for $n\geq3$, together with the planar result of
Zhang--Zhang \cite{ZhangZhang}, shows that, for every {$k\geq0$},
\[
 z_{k+1}:(\alpha_k^{\rm pow},\infty)\longrightarrow(0,\infty)
\]
is a strictly decreasing $C^1$ bijection. Their phase-transition
analysis also gives the required zero distribution of $v=\partial_\alpha u$ and the sign relation
$u_r(z_j(\alpha),\alpha)
v(z_j(\alpha),\alpha)>0$.  Moreover, $f_{\rm pow}'(U^{\rm pow}_{k,R})$ is bounded on $[0,R]$,
so the associated radial quadratic form is the standard closed,
lower-bounded Dirichlet form. All the hypotheses of
Proposition~\ref{prop:zero-curve-principle} are therefore satisfied.
Consequently,
\[
 \ker_{\rm rad}L_{U^{\rm pow}_{k,R}}=\{0\},
 \qquad
 m_{\rm rad}(U^{\rm pow}_{k,R})=k+1.
\]
This proves Theorem~\ref{thm:morse-main}(a).

\subsection{The logarithmic equation}
Let $U=U^{\log}_{k,R}$.  Theorems~\ref{thm:log-main} and~\ref{thm:log-whole},
together with Proposition~\ref{prop:log-planar}, provide the required
zero-curve bijection, zero-sign relation, and interlacing property of the
shooting variation.  At every simple
zero $z$ of $U$,
\[
 \log U^2(r)+2=2\log|r-z|+O(1)\quad\hbox{as }r\to z.
\]
Since the nodal set of $U$ is finite, it follows that
$\log U^2+2\in L^1(0,R)$. Lemma~\ref{lem:log-L1} therefore gives uniqueness for the regular
linearized Cauchy problem across the nodal set.
{Moreover, $a=\log U^2+2$ is bounded above, whereas
$(-a)_+$ has only finitely many logarithmic $L^1$ singularities and
is bounded near the origin. Lemma~\ref{lem:singular-form} therefore
shows that the radial quadratic form is closed, bounded from below,
and has compact resolvent.} Thus all the hypotheses of
Proposition~\ref{prop:zero-curve-principle} are satisfied.
Consequently,
\[
 \ker_{\rm rad}L_U=\{0\},
 \qquad
 m_{\rm rad}(U)=k+1.
\]
This proves Theorem~\ref{thm:morse-main}(b).

\subsection{The sublinear equation}

Let $U=U_{k,R}^{\rm sub}$.  Near every simple zero $z$ of $U$,
\[
 {|U(r)|^{q-2}\asymp |r-z|^{q-2}\in L^1_{\rm loc}},
\]
since $q-2>-1$. Thus the linearized equation is well defined across
the nodal set in the integral sense, and the corresponding linear
Cauchy problem is unique. {Here
\[
 \mathcal Q_U(\phi)=\int_0^R
 \bigl(|\phi'|^2-\phi^2+(q-1)|U|^{q-2}\phi^2\bigr)r^{n-1}\,dr.
\]
The singular term is nonnegative and has only finitely many
$L^1$ singularities of the form $|r-z|^{q-2}$. The only negative
potential is the bounded term $-1$. Lemma~\ref{lem:singular-form}
therefore implies that $\mathcal Q_U$ is closed, bounded from below,
and has compact resolvent.}
The zero-curve bijection, the zero-sign relation, and the required
interlacing of the shooting variation follow from the results
established in the sublinear section, in particular
Lemma~\ref{lem:sublinear-zero-curve} and the corresponding
phase-transition results. Therefore all the hypotheses of
Proposition~\ref{prop:zero-curve-principle} are satisfied, and
\[
 \ker_{\rm rad}L_U=\{0\},\qquad
 m_{\rm rad}(U)=k+1.
\]
This proves Theorem~\ref{thm:morse-main}(c). Together with the power and logarithmic cases proved above, this
proves the radial assertions of Theorem~\ref{thm:morse-main}.

\subsection{Angular decomposition and the full Morse index}

{We now pass from the radial index to the full Dirichlet spectrum.
For each degree $\ell\ge0$, let $Y_\ell$ be a spherical harmonic satisfying}
\[
 -\Delta_{\mathbb S^{n-1}}Y_\ell
 =\ell(\ell+n-2)Y_\ell.
\]
On functions $\phi(x)=\psi(r)Y_\ell(x/r)$, the quadratic form is
\begin{equation}\label{eq:angular-form}
 \mathcal Q_{U,\ell}(\psi)
 =\int_0^R\left(
 |\psi'|^2+\frac{\ell(\ell+n-2)}{r^2}\psi^2
 -f'(U)\psi^2\right)r^{n-1}\,dr.
\end{equation}
{The proof of Lemma~\ref{lem:singular-form} applies verbatim in every
fixed angular sector, because the singular nodal radii stay away from the
origin and the centrifugal term is nonnegative.  Since $f'(U)$ is bounded
above in all three models, the sector forms have a common lower bound; their
orthogonal direct sum therefore defines the closed full Dirichlet form used
in Theorem~\ref{thm:morse-main}.}
For
$\ell\ge1$, writing $\psi=r^\ell\eta$ and integrating the cross term gives
the exact identity
\begin{equation*}
 \mathcal Q_{U,\ell}(r^\ell\eta)
 =\int_0^R\bigl(|\eta'|^2-f'(U)\eta^2\bigr)
 r^{n+2\ell-1}\,dr.
\end{equation*}
Thus every angular sector is a regular radial Sturm problem in the effective
dimension $n+2\ell$; in particular, its spectrum is discrete.

The case $\ell=0$ has already been proved and gives $N_0(U)=k+1$ with
trivial kernel. {For $\ell=1$, differentiation of the radial equation gives, in
the integral sense across the singular nodal radii in the logarithmic and
sublinear cases,
\begin{equation*}
 -(U')''-\frac{n-1}{r}(U')'
 +\frac{n-1}{r^2}U'-f'(U)U'=0.
\end{equation*}
We first count the zeros of $U'$ directly.  For each of the three
nonlinearities, if $F(s):=\int_0^s f(t)\,dt$ and $F(s)>0$, then
$sf(s)>0$.  The radial energy
$E=\frac12U'^2+F(U)$ is strictly decreasing along every nonconstant nodal
arc.  At an interior critical point, comparison with the following simple
zero gives $F(U)=E>0$.  Therefore $U''=-f(U)$ has the sign opposite to $U$:
every critical point in a positive nodal domain is a strict local maximum,
and every critical point in a negative nodal domain is a strict local
minimum.

At the first zero $z_1$, strict energy dissipation gives
$F(U(0))=E(0)>E(z_1)>0$.  Hence $f(U(0))>0$ and
$U''(0)=-f(U(0))/n<0$, so $U'$ is negative immediately to the right of
the origin.  If $r_*$ were its first zero in the positive first nodal domain,
then $U'<0$ on $(0,r_*)$ would force $U''(r_*)\ge0$, whereas the preceding
critical-point sign gives $U''(r_*)<0$.  Thus there is no interior critical
point before $z_1$.  Rolle's theorem supplies a critical point between every
two consecutive zeros.  Two such points in the same nodal domain would force
an intervening critical point of the opposite type, which the sign rule
excludes; hence the critical point is unique.  Consequently, $U'$ has exactly
$k$ zeros in $(0,R)$.

The same calculation shows $U''(0)\ne0$, so $U'(r)/r$ extends to a nonzero
regular solution at the origin; moreover $U'(R)\ne0$ because the boundary
zero is simple.  Applying Lemma~\ref{lem:radial-sturm} to $U'/r$ in the
effective dimension $n+2$ therefore gives
\[
 N_1(U)=k,
 \qquad \ker L_{U,1}=\{0\}.
\]}

{For $\ell\ge1$, the sector form domains coincide.  Hence, for
$\ell\ge2$, the min--max principle applied to \eqref{eq:angular-form} shows
that every ordered sector eigenvalue is nondecreasing in $\ell$.  Therefore
$0\le N_\ell(U)\le N_1(U)=k$.}
Furthermore, $f'(U)$ is bounded above in all three models. Since
$r^{-2}\ge R^{-2}$,
\[
 \mathcal Q_{U,\ell}(\psi)
 \ge\left(\frac{\ell(\ell+n-2)}{R^2}
 -\sup_{0<r<R}f'(U(r))\right)
 \int_0^R\psi^2r^{n-1}\,dr.
\]
{For all sufficiently large $\ell$, the coefficient in parentheses is
strictly positive; hence $N_\ell(U)=0$ and the sector kernel is trivial.
Finally, orthogonal spherical-harmonic decomposition of $H_0^1(B_R)$ and the
multiplicity formula for degree $\ell$ give
\eqref{eq:main-full-morse}.} The radial and degree-one kernel
statements proved above localize every possible full-space degeneracy to one
of the finitely many degrees $\ell\ge2$. This completes the proof of
Theorem~\ref{thm:morse-main}.

\vspace{.3cm}

\noindent\textbf{Acknowledgments.}
This work was supported by the Chongqing Natural Science Foundation, China
(CSTB2024NSCQ-LZX0038).

\medskip

\noindent\textbf{Conflict of interest.}
The authors declare that they have no conflict of interest.

\medskip
\noindent\textbf{Data availability.}
Data sharing is not applicable to this article as no data were created or analyzed in this study.

\end{document}